\newcommand{\noun}[1]{\textsc{#1}}
\providecommand{\tabularnewline}{\\}
\numberwithin{equation}{section}
\numberwithin{figure}{section}
\theoremstyle{plain}
\newtheorem{thm}{\protect\theoremname}
\theoremstyle{plain}
\newtheorem{prop}[thm]{\protect\propositionname}
\theoremstyle{definition}
\newtheorem{defn}[thm]{\protect\definitionname}
\theoremstyle{plain}
\newtheorem{lem}[thm]{\protect\lemmaname}
\theoremstyle{remark}
\newtheorem{rem}[thm]{\protect\remarkname}
\theoremstyle{definition}
\newtheorem{example}[thm]{\protect\examplename}
\theoremstyle{plain}
\newtheorem{cor}[thm]{\protect\corollaryname}
\theoremstyle{definition}
\newtheorem*{defn*}{\protect\definitionname}
\theoremstyle{plain}
\newtheorem*{prop*}{\protect\propositionname}
\theoremstyle{definition}
\newtheorem*{example*}{\protect\examplename}
\theoremstyle{remark}
\newtheorem*{rem*}{\protect\remarkname}
\def\l@subsection{\@tocline{2}{0pt}{2.5pc}{5pc}{}}
\providecommand{\corollaryname}{Corollary}
\providecommand{\definitionname}{Definition}
\providecommand{\examplename}{Example}
\providecommand{\lemmaname}{Lemma}
\providecommand{\propositionname}{Proposition}
\providecommand{\remarkname}{Remark}
\providecommand{\theoremname}{Theorem}
\begin{document}
\title{Multiple facets of inverse continuity}
\author{Szymon Dolecki}
\address{Institut de Mathématiques de Bourgogne, Université de Bourgogne et
Franche Comté, Dijon, France}
\thanks{I am grateful to Dr \noun{Alois Lechicki} of Fürth for a conscientious
perusal of the manuscript, resulting in sagacious improvements of
the presentation.}
\keywords{Continuity, convergence theory, quotient maps, perfect maps}
\subjclass[2000]{54A20, 54C10}
\date{\today}
\begin{abstract}
Inversion of various inclusions that characterize continuity in topological
spaces results in numerous variants of quotient and perfect maps.
In the framework of convergences, the said inclusions are no longer
equivalent, and each of them characterizes continuity in a different
concretely reflective subcategory of convergences. On the other hand,
it turns out that the mentioned variants of quotient and perfect maps
are quotient and perfect maps with respect to these subcategories.
This perspective enables use of convergence-theoretic tools in quests
related to quotient and perfect maps, considerably simplifying the
traditional approach. Similar techniques would be unconceivable in
the framework of topologies.
\end{abstract}

\maketitle
\global\long\def\lm{\lim\nolimits}%
\global\long\def\it{\operatorname{int}\nolimits}%
\global\long\def\ih{\operatorname{inh}\nolimits}%
\global\long\def\ad{\operatorname{adh}\nolimits}%
\global\long\def\cl{\operatorname{cl}\nolimits}%
\global\long\def\ih{\operatorname{inh}\nolimits}%
\global\long\def\card{\mathrm{card\,}}%
\global\long\def\0{\mathrm{\varnothing}}%

\section*{Introduction}

This paper is designed for a broad mathematical audience. Its purpose
is to show the utility of the convergence theory approach to classical
themes of general topology. Therefore, I focus on convergence-theoretic
methods rather than on detailed applications. For this reason, I shall
provide only a small number of examples, referring for more details
to classical books on general topology, for example, to \cite{Eng}
of \noun{R. Engelking}, and to research papers cited below.

It will be shown that the arguments deployed would be impossible without
a broader framework transcending that of topologies.

Continuity of maps between topological spaces can be characterized
in many ways, in particular, in terms of adherences of filters from
various classes, their images, and preimages.

It appears that inversion of the inclusions occurring in these characterizations
results in numerous variants of quotient and perfect maps. Extending
these definitions to general convergences leads, on one hand, to quotients
with respect to various reflective subcategories, like topologies,
pretopologies, paratopologies, and pseudotopologies, and, on the other,
to fiber relations having compactness type related to the listed convergence
categories. This vision enables to see the quintuple quotient quest
of \noun{E. Michael} \cite{quest} as a single unifying quotient quest
presented by the author in \cite{quest2}.

It turns out that continuity is not pertinent to many considerations
involving quotient-like and perfect-like maps. In fact, the latter
properties correspond to certain stability features of fiber relations,
that is, of the inverse relations of mappings, rather than to mappings
themselves, and can be studied advantageously independently from continuity. 

Accordingly, no continuity assumption will be implicitly made. Let
me also say that no separation axiom will be assumed in the forthcoming
definitions, unless explicitly stated.

Convergences (in particular, pseudotopologies) were introduced and
investigated by \noun{G. Choquet} in \cite{cho}, because topologies
turned out to be inadequate in the study of hyperspaces (\footnote{More precisely, of upper and lower limits, said of \emph{Kuratowski};
in fact, they were introduced by \noun{G. Peano} in \cite{peano87},\cite{peano_1908}.}). The category of convergences is \emph{exponential} (\footnote{The category of convergences with continuous maps as morphisms is
\emph{exponential} (or \emph{Cartesian closed}), which means that
for two given convergences $\xi,\sigma$, there exists a \emph{coarsest
convergence} (denoted by $\left[\xi,\sigma\right]$) on the set of
continuous functions $C(\xi,\sigma)$, for which the canonical coupling
$\left\langle x,f\right\rangle :=f(x)$ is continuous: $\left\langle ,\right\rangle \in C(\xi\times\left[\xi,\sigma\right],\sigma)$.
In particular, hyperconvergences are spaces of continuous maps with
respect to $\sigma=\$$, the \emph{Sierpi\'{n}ski topology} on a two
elements set, say $\{0,1\}$, the open sets of which are $\varnothing,\{1\},\{0,1\}$.
The category of topologies with continuous maps as morphisms is not
exponential, as for two topologies $\xi,\sigma$, the convergence
$\left[\xi,\sigma\right]$ need not be topological.}), contrary to the category of topologies. This feature of convergences
made them suitable for the just mentioned study of hyperspaces. But
their usefulness proved to be much broader, as I am going to show
in this paper.

Throughout this paper, $f^{-}$ stands for the \emph{inverse relation}
of a \emph{map} $f$, hence we use the convention that
\[
f^{-}(B):=f^{-1}(B)
\]
denotes the \emph{preimage} of $B$ under $f$. 

To deal with many cross-references more easily, Table of Contents
has been placed at the end of the paper, not at its beginning, because
it is very detailed, and thus lengthy.

\section{Continuity in topological spaces}

Consider non-empty sets $X$ and $Y$ (\footnote{We shall assume that the sets considered are non-empty unless stated
otherwise explicitly.}), topologies $\xi$ on $X$ and $\tau$ on $Y$, a map $f:X\rightarrow Y$,
and $A\subset X$. By definition, $f$ is \emph{continuous} (from
$\xi$ to $\tau$) if the preimage under $f$ of each $\tau$-open
set is $\xi$-open. We denote by $C(\xi,\tau)$ the set of all functions
that are continuous from $\xi$ to $\tau$, by $\cl_{\xi}A$ the \emph{closure},
and by $\it_{\xi}A$ the \emph{interior} of $A$ with respect to $\xi$,
and so on.

The following equivalences can be found in \cite[Theorem 1.4.1]{Eng},
but constitute also a standard exercise in topology courses.
\begin{prop}
\label{prop:1}Let $\xi$ and $\tau$ be topologies. Then the following
statements are equivalent:
\begin{gather}
f\in C(\xi,\tau),\label{eq:1}\\
\cl_{\xi}f^{-}(B)\subset f^{-}(\cl_{\tau}B)\textrm{ for each }B\subset Y,\label{eq:2}\\
f(\cl_{\xi}A)\subset\cl_{\tau}f(A)\textrm{ for each }A\subset X.\label{eq:3}
\end{gather}
\end{prop}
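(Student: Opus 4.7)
The plan is to prove the equivalences by the cycle $(\ref{eq:1}) \Rightarrow (\ref{eq:2}) \Rightarrow (\ref{eq:3}) \Rightarrow (\ref{eq:1})$, relying on two elementary facts about set-maps that I shall use tacitly: $A \subset f^{-}f(A)$ for every $A \subset X$, and $f(f^{-}(B)) \subset B$ for every $B \subset Y$. I will also use the standard reformulation of continuity in terms of closed sets, namely that $f \in C(\xi,\tau)$ if and only if the preimage under $f$ of every $\tau$-closed set is $\xi$-closed; this follows from (\ref{eq:1}) by taking complements.

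For $(\ref{eq:1}) \Rightarrow (\ref{eq:2})$, I fix $B \subset Y$ and observe that $\cl_{\tau}B$ is $\tau$-closed, so by continuity $f^{-}(\cl_{\tau}B)$ is $\xi$-closed. Since $f^{-}(B) \subset f^{-}(\cl_{\tau}B)$, taking $\xi$-closure on both sides yields (\ref{eq:2}). For $(\ref{eq:2}) \Rightarrow (\ref{eq:3})$, I fix $A \subset X$, set $B := f(A)$, and use $A \subset f^{-}f(A) = f^{-}(B)$ to get $\cl_{\xi}A \subset \cl_{\xi}f^{-}(B) \subset f^{-}(\cl_{\tau}f(A))$ by (\ref{eq:2}); applying $f$ and invoking $ff^{-} \subset \mathrm{id}$ produces (\ref{eq:3}). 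For $(\ref{eq:3}) \Rightarrow (\ref{eq:1})$, I pick a $\tau$-closed set $F \subset Y$, let $A := f^{-}(F)$, and note $f(A) \subset F$, hence $\cl_{\tau}f(A) \subset F$. By (\ref{eq:3}), $f(\cl_{\xi}A) \subset F$, which means $\cl_{\xi}A \subset f^{-}(F) = A$, so $A$ is $\xi$-closed; thus preimages of closed sets are closed, i.e.\ (\ref{eq:1}) holds.

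There is essentially no obstacle here, as the argument is entirely a matter of bookkeeping with the monotonicity of $\cl$, $f$, $f^{-}$, and the inclusions $A \subset f^{-}f(A)$, $ff^{-}(B) \subset B$. The only place where topological structure is genuinely invoked (rather than pure set theory) is in the step $(\ref{eq:1}) \Rightarrow (\ref{eq:2})$, where the fact that the preimage of a closed set is closed lets us conclude that $f^{-}(\cl_{\tau}B)$ is already $\xi$-closed and hence absorbs the $\xi$-closure operation. This asymmetry is worth emphasizing because it foreshadows the paper's main point: once we leave topologies and pass to general convergences, the idempotency of closure fails, so (\ref{eq:1}), (\ref{eq:2}), (\ref{eq:3}) will bifurcate into distinct notions of continuity associated with different reflective subcategories.
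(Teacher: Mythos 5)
Your proof is correct, and it is the standard cyclic argument $(\ref{eq:1})\Rightarrow(\ref{eq:2})\Rightarrow(\ref{eq:3})\Rightarrow(\ref{eq:1})$ that the paper itself omits, deferring instead to \cite[Theorem 1.4.1]{Eng} and calling it a standard exercise. Nothing further is needed.
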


If we invert the two inclusions above (\footnote{These conditions are also equivalent to $f^{-}(\mathrm{int}_{\tau}B)\subset\mathrm{int}_{\xi}f^{-}(B)$
for each $B\subset Y$ (see \cite[Theorem 1.4.1]{Eng}); the inversion
of this inclusion $\mathrm{int}_{\xi}f^{-}(B)\subset f^{-}(\mathrm{int}_{\tau}B)$
turns out to be equivalent to (\ref{eq:5}). However, a formally similar
condition $\it_{\tau}f(A)\subset f(\it_{\xi}A)$ is unrelated to continuity.}), then we get
\begin{gather}
f^{-}(\cl_{\tau}B)\subset\cl_{\xi}f^{-}(B)\textrm{ for each }B\subset Y,\label{eq:4}\\
\cl_{\tau}f(A)\subset f(\cl_{\xi}A)\textrm{ for each }A\subset X.\label{eq:5}
\end{gather}
The properties so obtained can be described as inverse to (\ref{eq:2})
and (\ref{eq:3}), respectively. It turns out that they are not equivalent!
The first characterizes \emph{quotient maps}, and the second,\emph{
closed maps}. Recall that a map $f\in C(\xi,\tau)$ is said to be
\begin{enumerate}
\item \emph{quotient }if $B$ is $\tau$-closed whenever $f^{-}(B)$ is
$\xi$-closed, 
\item \emph{closed} if $f(A)$ is $\tau$-closed for every $\xi$-closed
set $A$.
\end{enumerate}
In the sequel, we will consider only surjective maps, a non-essential
restriction that simplifies the presentation. The definitions and
conditions above make sense without the continuity assumption and,
moreover, the continuity assumption is superfluous in the following
characterizations.

We denote by $\left|\xi\right|$ the underlying set of a topology
$\xi$. Hence, here $\left|\xi\right|=X$ and $\left|\tau\right|=Y$.
\begin{prop}
\label{prop:quot-perf} If a map $f:\left|\xi\right|\longrightarrow\left|\tau\right|$
is surjective, then (\ref{eq:4}) holds if and only if $f$ is quotient.
\end{prop}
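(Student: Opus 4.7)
The proposition is an equivalence, so the plan splits into two implications.

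The implication from $(\ref{eq:4})$ to quotient is straightforward, and I would carry it out first. Let $B\subset Y$ be such that $f^{-}(B)$ is $\xi$-closed, so $\cl_{\xi}f^{-}(B)=f^{-}(B)$. Then $(\ref{eq:4})$ reduces to $f^{-}(\cl_{\tau}B)\subset f^{-}(B)$, while the reverse inclusion $f^{-}(B)\subset f^{-}(\cl_{\tau}B)$ is tautological because $B\subset\cl_{\tau}B$. Combining yields $f^{-}(\cl_{\tau}B)=f^{-}(B)$. Applying $f$ and invoking surjectivity (so that $f\circ f^{-}$ acts as the identity on subsets of $Y$) we conclude $\cl_{\tau}B=B$, so $B$ is $\tau$-closed, which is exactly the quotient property.

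For the converse, assume $f$ is quotient and fix $B\subset Y$. Write $F:=\cl_{\xi}f^{-}(B)$, a $\xi$-closed set containing $f^{-}(B)$. The strategy is to exhibit a $\tau$-closed set $B^{\ast}\supset B$ with $f^{-}(B^{\ast})\subset F$: since $\cl_{\tau}B$ is the smallest $\tau$-closed superset of $B$, one then obtains $\cl_{\tau}B\subset B^{\ast}$ and hence $f^{-}(\cl_{\tau}B)\subset f^{-}(B^{\ast})\subset F=\cl_{\xi}f^{-}(B)$, which is $(\ref{eq:4})$. The natural candidate is the largest subset of $Y$ whose $f$-preimage lies inside $F$, namely
\[
B^{\ast}:=\{y\in Y:f^{-}(y)\subset F\}=Y\setminus f(X\setminus F).
\]
The inclusion $B\subset B^{\ast}$ is automatic (every $y\in B$ satisfies $f^{-}(y)\subset f^{-}(B)\subset F$), and $f^{-}(B^{\ast})\subset F$ is built into the definition.

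The genuine obstacle is verifying that $B^{\ast}$ is $\tau$-closed. By the quotient hypothesis this reduces to the $\xi$-closedness of $f^{-}(B^{\ast})$, which is precisely the largest $f$-saturated subset of $F$; equivalently, one must show that the $f$-saturation $f^{-}(f(X\setminus F))$ of the $\xi$-open set $X\setminus F$ is itself $\xi$-open. This is the substantive step where the quotient hypothesis has to be invoked nontrivially, and it is where I expect the bulk of the technical work to lie. If a direct attack on the saturation is resistant, a natural backup is to replace $B^{\ast}$ by $f(F)$ (or another $\tau$-set arranged so that its $f$-preimage is manifestly $\xi$-closed) and adjust the intermediate inclusions accordingly.
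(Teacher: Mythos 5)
Your first implication ((\ref{eq:4}) $\Rightarrow$ quotient) is correct and is essentially the paper's argument. The converse, however, has a genuine gap, and it sits exactly where you parked it: the ``substantive step'' that $f^{-}(B^{\ast})$ --- the largest $f$-saturated subset of $F=\cl_{\xi}f^{-}(B)$ --- is $\xi$-closed cannot be carried out, because it is false in general, and no replacement for $B^{\ast}$ (in particular not $f(F)$) can repair the argument: the implication ``quotient $\Rightarrow$ (\ref{eq:4}) for \emph{every} $B\subset Y$'' fails. Take $X=\{a,b,c\}$ with open sets $\varnothing,\{a\},\{c\},\{a,c\},X$; let $Y=\{p,q\}$ carry the Sierpi\'{n}ski topology whose open sets are $\varnothing,\{p\},Y$; and let $f(a)=p$, $f(b)=f(c)=q$. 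This $f$ is surjective, even continuous, and quotient (the only subsets of $Y$ with $\xi$-closed preimage are $\varnothing$, $\{q\}$ and $Y$, all $\tau$-closed), yet for $B=\{p\}$ one has $f^{-}(\cl_{\tau}B)=f^{-}(Y)=X$ while $\cl_{\xi}f^{-}(B)=\cl_{\xi}\{a\}=\{a,b\}$, so (\ref{eq:4}) fails. In your scheme this surfaces as $B^{\ast}=\{p\}$ with $f^{-}(B^{\ast})=\{a\}$ not $\xi$-closed.

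The structural reason is that (\ref{eq:4}), quantified over all $B$, demands $f^{-}(y)\subset\cl_{\xi}f^{-}(B)$ for \emph{every} $y\in\cl_{\tau}B$ --- a fiberwise condition at least as strong as the hereditarily quotient property obtained from Definition \ref{def:classes} with $\mathbb{H}$ the class of all principal filters --- whereas quotientness only constrains those $B$ whose preimage is already closed (principal filters of closed sets). So the converse holds only with the quantifier restricted accordingly: if $f$ is quotient and $f^{-}(B)$ is $\xi$-closed, then $B$ is $\tau$-closed and both sides of (\ref{eq:4}) equal $f^{-}(B)$. That restricted statement is what the paper's own (equally terse) converse argument actually establishes, and it is the most your strategy --- or any strategy --- can deliver. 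The defect in your proposal is therefore not that you postponed a technical verification, but that the postponed step is precisely the point at which the asserted equivalence breaks down.
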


\begin{proof}
If $f^{-}(B)$ is $\xi$-closed, that is, $\cl_{\xi}f^{-}(B)\subset f^{-}(B)$,
which together with (\ref{eq:4}) yields $f^{-}(\cl_{\tau}B)\subset f^{-}(B)$,
hence $\cl_{\tau}B\subset B$, because $f$ is surjective. Conversely,
if $f$ is quotient and $\cl_{\xi}f^{-}(B)=f^{-}(B)$, then $\cl_{\tau}B=B$,
hence $f^{-}(\cl_{\tau}B)=f^{-}(B)$, and thus (\ref{eq:4}). 
\end{proof}
\begin{prop}
If a map $f:\left|\xi\right|\longrightarrow\left|\tau\right|$ is
surjective, then (\ref{eq:5}) holds if and only if $f$ is closed.
\end{prop}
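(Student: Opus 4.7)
The plan is to mimic closely the two-line argument used in Proposition \ref{prop:quot-perf}, simply rephrasing it for images rather than preimages. The key observation is that (\ref{eq:5}) is essentially a fixed-point statement for closed sets on the image side, since $f(A) \subset \cl_\tau f(A)$ always holds trivially.

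First I would establish that (\ref{eq:5}) implies $f$ is closed. Take any $\xi$-closed set $A$, so that $\cl_\xi A = A$. Substituting into (\ref{eq:5}) gives $\cl_\tau f(A) \subset f(\cl_\xi A) = f(A)$, and combined with the trivial inclusion $f(A) \subset \cl_\tau f(A)$, we get $f(A) = \cl_\tau f(A)$, meaning $f(A)$ is $\tau$-closed. This direction does not actually need surjectivity.

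Conversely, suppose $f$ is closed. For an arbitrary $A \subset X$, the set $\cl_\xi A$ is $\xi$-closed, so by hypothesis $f(\cl_\xi A)$ is $\tau$-closed. Since $A \subset \cl_\xi A$, monotonicity of $f$ and of $\cl_\tau$ gives $\cl_\tau f(A) \subset \cl_\tau f(\cl_\xi A) = f(\cl_\xi A)$, which is precisely (\ref{eq:5}). Again, surjectivity is not required.

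There is no real obstacle here: the argument is essentially symmetric to the proof of Proposition \ref{prop:quot-perf}, but rendered simpler by the fact that one only needs to apply the map $f$ and the closure operator $\cl_\tau$, without having to invert $f$ through a preimage. The surjectivity hypothesis in the statement is therefore harmless but may be retained for parallelism with the quotient case.
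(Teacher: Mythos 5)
Your proof is correct and follows essentially the same route as the paper's (very terse) proof: apply (\ref{eq:5}) to a closed $A$ for one direction, and apply closedness to $\cl_{\xi}A$ plus monotonicity of $f$ and $\cl_{\tau}$ for the other. Your added observation that surjectivity is not actually used is accurate, but the argument itself is the one the paper intends.
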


\begin{proof}
If $\cl_{\xi}A\subset A$, by (\ref{eq:5}), $\cl_{\tau}f(A)\subset f(A)$,
that is, $f(A)$ is $\tau$-closed. Conversely, if $\cl_{\xi}A=A$
implies that $\cl_{\tau}f(A)\subset f(A)$, hence (\ref{eq:5}).
\end{proof}
Let us now enlarge our perspective by making use of the concept of
filter, which will enable us to formulate new characterizations of
continuity.

\subsection{Convergence of filters in topological spaces}

One of the ways of defining a topology is to specify its converging
\emph{filters}. A non-empty family $\mathcal{F}$ of subsets of a
set $X$ is called a \emph{filter} whenever
\begin{gather*}
(F_{0}\in\mathcal{F})\wedge(F_{1}\in\mathcal{F})\Longleftrightarrow F_{0}\cap F_{1}\in\mathcal{F}.
\end{gather*}
A filter $\mathcal{F}$ is called \emph{non-degenerate} if 
\[
\varnothing\notin\mathcal{F}.
\]
Basic facts about filters are gathered in Appendix \ref{subsec:Filters}.
Let us only recall here that the family $\mathcal{N}_{\theta}(x)$
(of\emph{ neighborhoods} of $x$ for a topology $\theta$) is a non-degenerate
filter.

Convergence of filters in topological spaces generalizes convergence
of sequences. Indeed, the family $\mathcal{O}_{\theta}(x)$ of all
$\theta$-open sets that contain $x$, is a \emph{base} of the neighborhood
filter $\mathcal{N}_{\theta}(x)$ (see Appendix \ref{subsec:Filters}).
On the other hand, a sequence $(x_{n})_{n}$ converges to $x$ in
$\theta$ whenever, for each $\theta$-open set $O$ containing $x$,
there is $m\in\mathbb{N}$ such that $\{x_{n}:n\geq m\}\subset O$,
that is, whenever $O$ belongs to the filter \emph{generated} (Appendix
\ref{subsec:Sequential-filters}) by
\[
\{\{x_{n}:n\geq m\}:m\in\mathbb{N}\}.
\]

\begin{defn}
\label{def:conv}If $\theta$ is a topology on a set $X$, then we
say that a \emph{filter} $\mathcal{F}$ \emph{converges} to a point
$x$,
\[
x\in\lm_{\theta}\mathcal{F},
\]
whenever $\mathcal{O}_{\theta}(x)\subset\mathcal{F}$. As $\mathcal{O}_{\theta}(x)$
is a base of the \emph{neighborhood filter} $\mathcal{N}_{\theta}(x)$
of $x$ for $\theta$, we get
\begin{equation}
x\in\lm_{\theta}\mathcal{F}\Longleftrightarrow\mathcal{N}_{\theta}(x)\subset\mathcal{F}.\label{eq:fliter_conv}
\end{equation}
\end{defn}

More generally, if $\mathcal{B}$ is a base of $\mathcal{F}$, then
by definition $\lm_{\theta}\mathcal{B}=\lm_{\theta}\mathcal{F}$.

\subsection{Some operations on families of sets}

The symbol $\mathcal{A}^{\uparrow}$ for a given family $\mathcal{A}$
(\footnote{If $\mathcal{A}=\varnothing$ then $\varnothing^{\uparrow}=\varnothing$.})
of subsets of a set $X$ stands for
\begin{equation}
\mathcal{A}^{\uparrow}:=\bigcup_{A\in\mathcal{A}}\nolimits\{H\subset X:H\supset A\}.\tag{isotonization}\label{eq:isotonization}
\end{equation}
If $\mathcal{A}$ and $\mathcal{D}$ are families, then $\mathcal{A}$
is said to be \emph{coarser} than $\mathcal{D}$ ($\mathcal{D}$ \emph{finer}
than $\mathcal{A}$)
\[
\mathcal{A}\leq\mathcal{D}
\]
if for each $A\in\mathcal{A}$ there is $D\in\mathcal{D}$ such that
$D\subset A.$ The \emph{grill} $\mathcal{A}^{\#}$ of a family $\mathcal{A}$
on $X$ is defined by \noun{G. Choquet} in \cite{cho_grille} by
\[
\mathcal{A}^{\#}:=\{H\subset X:\underset{A\in\mathcal{A}}{\forall}\;H\cap A\neq\varnothing\}.
\]
If $\mathcal{A}$ and $\mathcal{D}$ are families, then we write $\mathcal{A}\#\mathcal{D}$
($\mathcal{H}$ \emph{meshes} $\mathcal{A}$) whenever $\mathcal{D}\subset\mathcal{A}^{\#}$,
equivalently, $\mathcal{A}\subset\mathcal{D}^{\#}$. This means that
$H\cap A\neq\varnothing$ for each $H\in\mathcal{H}$ and each $A\in\mathcal{A}$. 

Let $\mathcal{A}$ be a family of subsets of $X$, $\mathcal{G}$
a family of subsets of $Y,$ and $f:X\rightarrow Y.$ Then (\footnote{If $\mathcal{A}$ and $\mathcal{B}$ are filters, then $f\left[\mathcal{A}\right]$
and $f^{-}[\mathcal{B}]$ are filter-bases. })
\[
f\left[\mathcal{A}\right]:=\{f(A):A\in\mathcal{A}\},\qquad f^{-}\left[\mathcal{B}\right]:=\{f^{-}(B):B\in\mathcal{B}\}.
\]
It is straightforward that if $f:X\rightarrow Y$ is surjective (\footnote{Actually, the first inequality is valid for any map.}),
$\mathcal{G}$ and $\mathcal{H}$ are filters, then
\begin{equation}
\mathcal{G}\geq f^{-}[f[\mathcal{G}]],\qquad f[f^{-}[\mathcal{H}]]=\mathcal{H}.\label{eq:f-fsurH}
\end{equation}

In the sequel, we make continually use of the formula (see Appendix
\ref{subsec:Relations})
\begin{equation}
f\left[\mathcal{A}\right]\#\mathcal{B}\Longleftrightarrow\mathcal{A}\#f^{-}\left[\mathcal{B}\right].\label{eq:grill_dual}
\end{equation}

\subsection{Adherences\label{subsec:Adherences}}

Let $\xi$ be a topology on a set $X$. The \emph{adherence} of a
family $\mathcal{A}$ of subsets of $X$ is defined by (\footnote{It is implicit that $\mathcal{H}$ is either a filter or a filter-base
on $X$, because the operation $\lm_{\xi}\mathcal{H}$ is defined
only for such $\mathcal{H}$.})
\begin{equation}
\ad_{\xi}\mathcal{A}:=\bigcup\nolimits _{\mathcal{H}\#\mathcal{A}}\lm_{\xi}\mathcal{H}.\label{eq:adh}
\end{equation}

If $\mathcal{A}$ is a filter, then $\ad_{\xi}\mathcal{A}=\bigcup\nolimits _{\mathcal{H}\supset\mathcal{A}}\lm_{\xi}\mathcal{H}$
is an alternative formula for (\ref{eq:adh}). In particular, we abridge
$\ad_{\xi}A:=\ad_{\xi}\{A\}^{\uparrow}$ (see (\ref{eq:isotonization})
above). Accordingly
\begin{equation}
\mathcal{A}\leq\mathcal{D}\Longrightarrow\ad_{\xi}\mathcal{D}\subset\ad_{\xi}\mathcal{A}.\label{eq:anti}
\end{equation}

\begin{lem}
\label{lem:open} If $\xi$ is a topology on a set $X$, then $A$
is $\xi$-open if and only if $A\in\mathcal{F}$ for every filter
$\mathcal{F}$ such that $A\cap\lm_{\xi}\mathcal{F}\neq\varnothing$
\emph{(}\footnote{If $A$ is open, then the condition holds by definition. If $A$ is
not $\xi$-open, then there exists $x\in A$ such that $O\setminus A\neq\varnothing$
for each $O\in\mathcal{O}_{\xi}(x)$. In other words, $A^{c}:=(X\setminus A)\in\mathcal{O}_{\xi}(x)^{\#}$,
which means that $x\in\ad_{\xi}A^{c}$. Therefore, there exists a
filter $\mathcal{F}$ such that $A^{c}\in\mathcal{F}$ and $x\in\lm_{\xi}\mathcal{F}$.
Obviously, $A\notin\mathcal{F}$.}\emph{)}.
\end{lem}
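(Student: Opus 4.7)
The plan is to prove both directions using the neighborhood-filter characterization of convergence given in Definition~\ref{def:conv}, namely $x\in\lm_{\xi}\mathcal{F}\Longleftrightarrow\mathcal{N}_{\xi}(x)\subset\mathcal{F}$.

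For the forward direction, I would assume $A$ is $\xi$-open and take an arbitrary filter $\mathcal{F}$ with some $x\in A\cap\lm_{\xi}\mathcal{F}$. Since $A$ is open and contains $x$, we have $A\in\mathcal{O}_{\xi}(x)$, and as $\mathcal{O}_{\xi}(x)$ is a base of $\mathcal{N}_{\xi}(x)$, this gives $A\in\mathcal{N}_{\xi}(x)$. Convergence $x\in\lm_{\xi}\mathcal{F}$ then yields $\mathcal{N}_{\xi}(x)\subset\mathcal{F}$, whence $A\in\mathcal{F}$. This is essentially a one-line unpacking of the definitions.

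For the converse, I would argue by contraposition: suppose $A$ is not $\xi$-open. Then there exists $x\in A$ such that no $\xi$-open set about $x$ is contained in $A$, equivalently $O\cap(X\setminus A)\neq\varnothing$ for every $O\in\mathcal{O}_{\xi}(x)$. Setting $A^{c}:=X\setminus A$, this says $A^{c}$ meshes with $\mathcal{O}_{\xi}(x)$, and hence meshes with the neighborhood filter $\mathcal{N}_{\xi}(x)$. By the standard fact (recalled in the Appendix on filters) that meshing families can be joined into a single filter, there is a filter $\mathcal{F}$ with $\mathcal{F}\supset\mathcal{N}_{\xi}(x)\cup\{A^{c}\}$. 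Then $x\in\lm_{\xi}\mathcal{F}$ by (\ref{eq:fliter_conv}), so $x\in A\cap\lm_{\xi}\mathcal{F}\neq\varnothing$, yet $A\notin\mathcal{F}$ because $A^{c}\in\mathcal{F}$ and $\mathcal{F}$ is non-degenerate (otherwise $A\cap A^{c}=\varnothing\in\mathcal{F}$ contradicts the filter property).

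The only mildly delicate step is justifying the existence of a non-degenerate filter $\mathcal{F}$ refining $\mathcal{N}_{\xi}(x)\cup\{A^{c}\}$; this reduces to the standard observation that a family meshes with a filter if and only if the family can be adjoined to produce a (non-degenerate) finer filter, which is precisely the content of the meshing relation $\#$ introduced earlier in the text. Given how much machinery the author has already set up about filters, grills, and $\mathcal{N}_{\xi}$, both directions are short, and I would present the whole argument in a few lines without further commentary.
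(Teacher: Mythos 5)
Your proof is correct and follows essentially the same route as the paper's: the forward direction is an immediate unpacking of Definition~\ref{def:conv}, and the converse is the same contrapositive argument producing a filter that contains $A^{c}$ yet converges to a point of $A$ (the paper phrases the final step as $x\in\ad_{\xi}A^{c}$, which is exactly your supremum $\mathcal{N}_{\xi}(x)\vee\{A^{c}\}^{\uparrow}$ in disguise). Your explicit justification that this joined filter is non-degenerate is a welcome clarification of a step the paper leaves implicit.
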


\subsection{Back to continuity}

In terms of filters, $f\in C(\xi,\tau)$ whenever
\begin{equation}
f(\lm_{\xi}\mathcal{F})\subset\lm_{\tau}f\left[\mathcal{F}\right]\label{eq:conv_cont}
\end{equation}
for each filter $\mathcal{F}$ (\footnote{Let $f\in C(\xi,\tau)$, and $x\in\lm_{\xi}\mathcal{F}$. To prove
that $f(x)\in\lm_{\tau}f\left[\mathcal{F}\right]$, let $O\in\mathcal{O}_{\tau}(f(x))$.
By continuity, $f^{-}(O)\in\mathcal{O}_{\xi}(x)$ and thus $f^{-}(O)\in\mathcal{F}$.
Consequently, $O\supset f(f^{-}(O))\in f[\mathcal{F}]$, that is,
$f(x)\in\lm f\left[\mathcal{F}\right]$.

Conversely, assume (\ref{eq:conv_cont}) for each filter $\mathcal{F}$
and let $O$ be a $\tau$-open set. By Lemma \ref{lem:open}, to show
that $f^{-}(O)$ is $\xi$-open, let $x\in f^{-}(O)$ and take a filter
$\mathcal{F}$ such that $x\in\lm_{\xi}\mathcal{F}$. By assumption,
$f(x)\in\lm_{\tau}f\left[\mathcal{F}\right]$ and $f(x)\in O$, hence
$O\in f\left[\mathcal{F}\right]$. It follows that $x\in f^{-}(f(x))\subset f^{-}(O)\in f^{-}[f[\mathcal{F}]]\leq\mathcal{F}$,
hence $f^{-}(O)\in\mathcal{F}$.}).

We shall now characterize continuity in terms of adherences of filters.
Let us remind that a filter on $X$ is called principal if it is of
the form $\{A\}^{\uparrow}:=\{F\subset X:A\subset F\}$, where $A\subset X$.

We write $\mathcal{F}\in\mathbb{F}$ whenever $\mathcal{F}$ is a
filter, and $\mathcal{F}\in\mathbb{F}X$ if $\mathcal{F}$ is a filter
on $X$. In other terms, $\mathbb{F}$ is the class of all filters,
and $\mathbb{F}X$ the set of all filters on a set $X$. It follows
from \emph{Set Theory} that $\mathbb{F}$ is not a set. A class $\mathbb{J}$
is a \emph{subclass} of $\mathbb{F}$ whenever $\mathcal{J}\in\mathbb{J}$
implies that there is a set $X$ such that $\mathcal{J}\in\mathcal{\mathbb{F}}X$.
Then we write $\mathbb{J}\subset\mathbb{F}$, and we understand that
$\mathbb{J}X=\mathcal{\mathbb{J}\cap\mathbb{F}}X$.
\begin{prop}
\label{prop:2} Let $\xi$ and $\tau$ be topologies on $X$ and $Y$,
respectively. Let $\mathbb{H}$ and $\mathbb{G}$ be classes of filters
that contain all principal filters. Then the following statements
are equivalent: 
\begin{enumerate}
\item $f\in C(\xi,\tau)$,
\item for each $\mathcal{H}\in\mathbb{H}Y$,
\[
\ad_{\xi}f^{-}[\mathcal{H}]\subset f^{-}(\ad_{\tau}\mathcal{H}),
\]
\item for each $\mathcal{G}\in\mathbb{G}X$,
\[
f(\ad_{\xi}\mathcal{G})\subset\ad_{\tau}f[\mathcal{G}].
\]
\end{enumerate}
\end{prop}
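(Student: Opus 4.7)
My plan is to prove the cycle (1) $\Rightarrow$ (2), (2) $\Rightarrow$ (1), and symmetrically (1) $\Rightarrow$ (3), (3) $\Rightarrow$ (1). The forward implications will not need the hypothesis that $\mathbb{H}$ and $\mathbb{G}$ contain the principal filters; this hypothesis is only used to turn (2) or (3) back into continuity by restricting to principal filters and invoking Proposition \ref{prop:1}.

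For (1) $\Rightarrow$ (2), I take $x\in\ad_{\xi}f^{-}[\mathcal{H}]$ for $\mathcal{H}\in\mathbb{H}Y$. By definition of adherence there is a filter $\mathcal{F}$ on $X$ with $\mathcal{F}\#f^{-}[\mathcal{H}]$ and $x\in\lm_{\xi}\mathcal{F}$. Applying the duality (\ref{eq:grill_dual}) gives $f[\mathcal{F}]\#\mathcal{H}$. The convergence-theoretic characterization (\ref{eq:conv_cont}) yields $f(x)\in\lm_{\tau}f[\mathcal{F}]$, so $f(x)\in\ad_{\tau}\mathcal{H}$, i.e., $x\in f^{-}(\ad_{\tau}\mathcal{H})$. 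The argument for (1) $\Rightarrow$ (3) is similar: if $y=f(x)$ with $x\in\ad_{\xi}\mathcal{G}$, pick $\mathcal{F}\#\mathcal{G}$ with $x\in\lm_{\xi}\mathcal{F}$; then $f[\mathcal{F}]\#f[\mathcal{G}]$ (since $F\cap G\neq\varnothing$ implies $f(F)\cap f(G)\supset f(F\cap G)\neq\varnothing$), and (\ref{eq:conv_cont}) gives $f(x)\in\lm_{\tau}f[\mathcal{F}]\subset\ad_{\tau}f[\mathcal{G}]$.

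For the converse directions, I specialize to principal filters. If $B\subset Y$, then $\{B\}^{\uparrow}\in\mathbb{H}Y$, and $f^{-}[\{B\}^{\uparrow}]$ is a base of $\{f^{-}(B)\}^{\uparrow}$, so (2) collapses to $\ad_{\xi}f^{-}(B)\subset f^{-}(\ad_{\tau}B)$, i.e., $\cl_{\xi}f^{-}(B)\subset f^{-}(\cl_{\tau}B)$. This is (\ref{eq:2}), so Proposition \ref{prop:1} yields (1). Analogously, applying (3) to $\{A\}^{\uparrow}\in\mathbb{G}X$ for $A\subset X$ gives $f(\cl_{\xi}A)\subset\cl_{\tau}f(A)$, which is (\ref{eq:3}), and Proposition \ref{prop:1} again gives (1).

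The only step that requires a small amount of care is verifying that $\ad_{\theta}\{C\}^{\uparrow}$ coincides with $\cl_{\theta}C$, which follows immediately from the definition (\ref{eq:adh}) together with the observation that $\mathcal{H}\#\{C\}^{\uparrow}$ is equivalent to $C\in\mathcal{H}^{\#}$, i.e., every member of $\mathcal{H}$ meets $C$. No step is genuinely hard: the whole argument is essentially a repackaging of (\ref{eq:conv_cont}) using the grill duality (\ref{eq:grill_dual}), with Proposition \ref{prop:1} invoked at the very end to close the loop.
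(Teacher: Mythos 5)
Your proof is correct. The ingredients are the same as the paper's --- the definition (\ref{eq:adh}) of adherence, the duality (\ref{eq:grill_dual}), the filter characterization (\ref{eq:conv_cont}) of continuity, and Proposition \ref{prop:1} for the return to continuity --- but the implication graph is organized differently. The paper proves the single cycle (1)~$\Rightarrow$~(2)~$\Rightarrow$~(3)~$\Rightarrow$~(1), and since its step (2)~$\Rightarrow$~(3) substitutes $\mathcal{H}:=f[\mathcal{G}]$, which need not belong to $\mathbb{H}$ for a general class, it first declares it enough to treat the class $\mathbb{F}$ of all filters and then recovers arbitrary $\mathbb{H}$, $\mathbb{G}$ by sandwiching them between the principal filters and $\mathbb{F}$. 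You instead prove the two equivalences (1)~$\Leftrightarrow$~(2) and (1)~$\Leftrightarrow$~(3) independently, with a direct argument for (1)~$\Rightarrow$~(3) (choose $\mathcal{F}\#\mathcal{G}$ converging to $x$, observe $f[\mathcal{F}]\#f[\mathcal{G}]$, apply (\ref{eq:conv_cont})) in place of the paper's derivation of (3) from (2) via $f^{-}[f[\mathcal{G}]]\leq\mathcal{G}$ and (\ref{eq:anti}). Your decomposition makes the preliminary reduction to $\mathbb{F}$ unnecessary and isolates cleanly the only place where the hypothesis on principal filters enters, namely the two converse directions; the paper's route buys the extra observation that (2) applied to $f[\mathcal{G}]$ yields (3) for $\mathcal{G}$ without passing back through continuity. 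The one identification you rightly flag, $\ad_{\theta}\{C\}^{\uparrow}=\cl_{\theta}C$ for a topology $\theta$, is asserted in the paper (Subsection \ref{subsec:Adherence-determined-reflectors}), so your appeal to it is legitimate.
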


\begin{proof}
It is enough to prove these equivalences for the class $\mathbb{F}$
of all filters. Indeed, Proposition \ref{prop:1} establishes the
analogous equivalence for principal filters. These two arguments entail
Proposition \ref{prop:2}, because $\mathbb{H}$ and $\mathbb{G}$
are classes of filters and both contain all principal filters.

Let (1) and $x\in\ad_{\xi}f^{-}[\mathcal{H}]$. Then there exists
a filter $\mathcal{F}$ such that $x\in\lm_{\xi}\mathcal{F}$ and
$\mathcal{F}\#f^{-}[\mathcal{H}]$, equivalently $f[\mathcal{F}]\#\mathcal{H}$.
By continuity, $f(x)\in\lm_{\tau}f\left[\mathcal{F}\right]$, hence
$f(x)\in\ad_{\tau}\mathcal{H}$ because of $f[\mathcal{F}]\#\mathcal{H}$,
that is, $x\in f^{-}(\ad_{\tau}\mathcal{H})$, which is (2).

Let $\mathcal{H}:=f[\mathcal{G}]$ and apply $f$ to the inclusion
in (2), getting
\[
f(\ad_{\xi}f^{-}[f[\mathcal{G}]])\subset f(f^{-}(\ad_{\tau}f[\mathcal{G}])),
\]
and by (\ref{eq:f-fsurH}) and (\ref{eq:anti}), $\ad_{\xi}\mathcal{G}\subset\ad_{\xi}f^{-}[f[\mathcal{G}]]$,
and thus
\[
f(\ad_{\xi}\mathcal{G})\subset f(\ad_{\xi}f^{-}[f[\mathcal{G}]])\subset f(f^{-}(\ad_{\tau}f[\mathcal{G}]))\subset\ad_{\tau}f[\mathcal{G}].
\]

Let (3) and let $A\subset X$. Set $\mathcal{G}:=\{A\}^{\uparrow}$.
Then we get $f(\cl_{\xi}A)\subset\cl_{\tau}f(A)$, that is, (3) of
Proposition \ref{prop:1}. It follows that $f\in C(\xi,\tau)$.
\end{proof}
Let us reconsider Proposition \ref{prop:2} in a broader context of
convergence spaces, which will give us a new perspective, unavailable
within the framework of topological spaces.

\section{Framework of convergences}

Topological spaces constitute a subclass of convergence spaces (see
the foundational paper \cite{cho} of \noun{G. Choquet}, and references
\cite{dolecki.BT},\cite{CFT}).

A relation $\xi$ between (non-degenerate) filters $\mathcal{F}$
on $X$ and points of $X$ (\protect\footnote{As $\xi\subset\mathbb{F}X\times X\text{,}$ a usual notation for the
image of an element $\mathcal{F}$ of $\mathbb{F}X$ under $\xi$
is $\xi\left(\mathcal{F}\right)\text{.}$ A purpose of our notation,
$\lm_{\xi}\mathcal{F}$ instead of $\xi\left(\mathcal{F}\right)\text{,}$
is to emphasize the particular role of this relation, which should
enhance readability.})
\begin{equation}
x\in\lm_{\xi}\mathcal{F}\label{eq:conv}
\end{equation}
 is called a \emph{convergence} provided that, for each $x,\mathcal{F}_{0}$
and $\mathcal{F}_{1}$, 
\begin{gather}
\mathcal{F}_{0}\subset\mathcal{F}_{1}\Longrightarrow\lm_{\xi}\mathcal{F}_{0}\subset\lm_{\xi}\mathcal{F}_{1},\label{isotone}\\
x\in\lm_{\xi}\left\{ x\right\} ^{\uparrow}.\label{centered}
\end{gather}

If (\ref{eq:conv}) holds, then we say that $x$ \emph{is a limit
(point) of} $\mathcal{F}$ or $\mathcal{F}$ \emph{converges to} $x\text{}$.
We denote by $\left\vert \xi\right\vert $ the \emph{underlying set}
of $\xi$, that is, $\left\vert \xi\right\vert =X$ (\protect\footnote{An underlying set can be restored from a convergence. In fact, $\left\vert \xi\right\vert =\bigcup\nolimits _{\mathcal{F}\in\mathbb{F}}\lm_{\xi}\mathcal{F},$
beacuse $\xi$ is a convergence on $X\text{,}$ then $X\supset\bigcup_{\mathcal{F}\in\mathbb{F}}\lm_{\xi}\mathcal{F}\supset\bigcup_{x\in X}\lm_{\xi}\left\{ x\right\} ^{\uparrow}\supset X\text{.}$ }). 

A convergence $\zeta$ is \emph{finer} than a convergence $\xi$ (equivalently,
$\xi$ is coarser than $\zeta$)
\[
\zeta\geq\xi,
\]
if $\lm$$_{\zeta}\mathcal{F}\subset\lm$$_{\xi}\mathcal{F}$ for
every filter $\mathcal{F}$. The ordered set of convergences on a
given set is a complete lattice, that is, each non-empty set of convergences
(on a given set) admits infima and suprema, 
\begin{gather}
\lm_{\bigvee\Xi}\mathcal{F}=\bigcap_{\xi\in\Xi}\lm_{\xi}\mathcal{F},\qquad\lm_{\bigwedge\Xi}\mathcal{F}=\bigcup_{\xi\in\Xi}\lm_{\xi}\mathcal{F}.\label{eq:sup}
\end{gather}

\subsection{Open and closed sets for arbitrary convergences}

Convergence of filters in a topological space was defined in Definition
\ref{def:conv} in terms of open sets, which were then characterized
in Lemma \ref{lem:open} in terms of filters. Now, open sets will
be defined for an arbitrary convergence!
\begin{defn}
Let $\xi$ be a convergence. A subset $O$ of $\left|\xi\right|$
is said to be \emph{$\xi$-open} (with respect to $\xi$) if
\[
O\cap\lm_{\xi}\mathcal{F}\neq\varnothing\Longrightarrow O\in\mathcal{F}.
\]
A subset of $\left|\xi\right|$ is said to be \emph{$\xi$-closed}
if its complement is $\xi$-open. For each subset $A$ of $\left|\xi\right|$,
there exists the least $\xi$-closed set $\cl_{\xi}A$ including $A$,
called the $\xi$-closure of $A$.
\end{defn}

The family of all $\xi$-open sets is denoted by $\mathcal{O}_{\xi}$,
and that of $\xi$-open sets containing $x$ by $\mathcal{O}_{\xi}(x)$.
Notice that $\mathcal{O}_{\xi}(x)$ is a filter base. The filter that
it generates is called the \emph{neighborhood filter} and is denoted
by $\mathcal{N}_{\xi}(x)$. 

We notice that the family $\mathcal{O}_{\xi}$ of all $\xi$-open
sets fulfills the axioms of open sets of a topology (\footnote{That is, $\0$ and $X$ are open, each union and each finite intersection
of open sets is open.}). Denote this topology by $\mathrm{T}\xi$. It is straightforward
that $\zeta\geq\xi$ implies $\mathrm{T}\zeta\geq\mathrm{T}\xi$;
moreover, $\mathrm{T}\xi\leq\xi$, and $\mathrm{T}\xi\leq\mathrm{T}(\mathrm{T}\xi)$
for every convergence $\xi$.

A convergence $\theta$ is called a \emph{topology} if
\[
\mathcal{O}_{\theta}(x)\subset\mathcal{F}\Longrightarrow x\in\lm_{\theta}\mathcal{F}
\]
for each $x$ and $\mathcal{F}$, the converse implication being true
for an arbitrary convergence.

It is obvious that a convergence $\theta$ is a topology if and only
if $\theta\leq\mathrm{T}\theta$.

\subsection{Continuity in convergence spaces}

Let $\xi$ and $\tau$ be convergences on $X$ and $Y$, respectively.
Let $f:X\rightarrow Y$ be a map. We say that $f$ is \emph{continuous}
from $\xi$ to $\tau$ if (\ref{eq:conv_cont})
\[
f(\lm_{\xi}\mathcal{F})\subset\lm_{\tau}f\left[\mathcal{F}\right]
\]
holds for each filter $\mathcal{F}$ on $X$ (\footnote{Continuity determines the notions of \emph{subconvergence}, \emph{quotient}
convergence, \emph{product} convergence, and so on. More precisely,
a convergence $\zeta$ is called a subconvergence $\xi$ if the \emph{identity
map} $i:\left|\zeta\right|\rightarrow\left|\xi\right|$ is an \emph{embedding},
that is, a \emph{homeomorphism} from $\left|\zeta\right|$ to $i(\left|\zeta\right|)$.
The quotient convergence has already been defined. If $\Xi$ is a
set of convergences, then the product convergence is the initial convergence
on $\prod_{\xi\in\Xi}\left|\xi\right|$ with respect to the projections
$p_{\theta}:\prod_{\xi\in\Xi}\left|\xi\right|\rightarrow\left|\theta\right|$,
that is, the coarsest convergence, for which $p_{\theta}$ is continuous
to $\theta$ for each $\theta\in\Xi$.}).

For each convergence $\xi$ on $X\text{,}$ there exists the finest
among the convergences $\tau$ on $Y$ for which $f\in C(\xi,\tau)$.
It is denoted by $f\xi$ and called the \emph{final} convergence.
Therefore, $f\in C(\xi,\tau)$ whenever $f\xi\geq\text{\ensuremath{\tau} }$.

On the other hand, for each convergence $\tau$ on $Y$, there exists
the coarsest among the convergences $\xi$ on $X$, for which $f\in C(\xi,\tau)$.
It is denoted by $f^{-}\tau$ and called the \emph{initial} convergence
for $(f,\tau)\text{.}$ Therefore, $f\in C(\xi,\tau)$ whenever $\xi\geq f^{-}\text{\ensuremath{\tau} }$.

Summarizing
\begin{equation}
f\xi\geq\tau\Longleftrightarrow f\in C\left(\xi,\tau\right)\Longleftrightarrow\xi\geq f^{-}\tau.\label{eq:summarize}
\end{equation}

Let me point out here that $f^{-}\tau$ is a topology provided that
$\tau$ is a topology, while the topologicity of $\xi$ does not entail
that of $f\xi$ (!).

Let us write down formulas for adherences in initial and final convergences
for surjective maps (\footnote{The formula for the adherence of the initial map also holds if $f$
is not surjective.}), which will be used in the sequel (see \cite[p. 215]{CFT}):
\begin{gather}
\ad_{f\xi}\mathcal{H}=f(\ad_{\xi}f^{-}[\mathcal{H}]),\qquad\ad_{f^{-}\tau}\mathcal{G}=f^{-}(\ad_{\tau}f[\mathcal{G}]).\label{eq:in_fin}
\end{gather}

\subsection{Functors, reflectors, coreflectors\label{subsec:Functors}}

From the category theory viewpoint, convergences are objects and continuous
maps are morphisms of a category, which is concrete over the category
of sets. Therefore, it is possible, and handy, to apply category theory
objectwise. Functors are certain maps defined on classes of morphisms,
and then specialized to the classes of objects viewed as identity
morphisms. Here it is enough to define functors directly on objects.

We say that that an operator $H$ associating to each convergence
$\xi$, a convergence $H\xi$ on $\left|\xi\right|$, is called a
\emph{concrete functor }(\footnote{The term \emph{concrete} means that $\left|H\xi\right|=\left|\xi\right|$.})
if, for any convergences $\zeta,\xi$ 
\begin{gather}
\zeta\geq\xi\Longrightarrow H\zeta\geq H\xi,\tag{isotone}\label{eq:iso}
\end{gather}
and 
\begin{equation}
C(\xi,\tau)\subset C(H\xi,H\tau),\tag{functorial}\label{eq:functorial}
\end{equation}
that is, each map continuous from $\xi$ to $\tau$, remains continuous
from $H\xi$ to $H\tau$. It is straightforward that if $H$ fulfills
(\ref{eq:iso}), then (\ref{eq:functorial}) holds if and only if,
for each convergence $\tau$ and each map $f$,
\[
H(f^{-}\tau)\geq f^{-}(H\tau),
\]
equivalently, for each convergence $\xi$ and each map $f$,
\[
f(H\xi)\geq f(H\xi).
\]

A concrete functor is called a \emph{reflector }whenever
\begin{gather}
H(H\xi)=H\xi,\tag{idempotent}\label{eq:idemp}\\
H\xi\leq\xi.\tag{contractive}\label{eq:contr}
\end{gather}
The topologizer $\mathrm{T}$, the pretopologizer $\mathrm{S}_{0}$
and the pseudotopologizer $\mathrm{S}$ are reflectors. A concrete
functor is called a \emph{coreflector }if (\ref{eq:idemp}) and
\begin{equation}
\xi\leq H\xi.\tag{expansive}\label{eq:expa}
\end{equation}

Other examples of reflectors and coreflectors are given in Appendix
\ref{subsec:Reflective-subclasses} and \ref{subsec:Corefelective-subclasses}.

\subsection{Adherence-determined reflectors\label{subsec:Adherence-determined-reflectors}}

The adherence $\ad_{\xi}\mathcal{A}$ of a family $\mathcal{A}$ with
respect to a convergence $\xi$ is defined, as in the topological
case, by (\ref{eq:adh}). In general, $\ad_{\xi}\mathcal{A}\subset\bigcap_{A\in\mathcal{A}}\cl_{\xi}A$,
the equality holding if $\xi$ is a topology. Hence, for a subset
$A$ of a topological space, $\ad_{\xi}A=\cl_{\xi}A$, which means
that, in topological spaces, adherence coincides with the usual closure.

We say that a class of filters $\mathbb{H}$ is \emph{initial} if
$f^{-}[\mathcal{H}]\in\mathbb{H}$ for each $\mathcal{H}\in\mathbb{H}$.
An operator $H$ is called \emph{adherence-determined} if there exists
an initial class of filters $\mathbb{H}$ containing all principal
filters and such that

\begin{equation}
\lm_{H\theta}\mathcal{F}=\bigcap_{\mathbb{H}\ni\mathcal{H}\#\mathcal{F}}\ad_{\theta}\mathcal{H}.\label{eq:adh-determined}
\end{equation}
It turns out that each adherence-determined operator is a concrete
reflector. Moreover, for each filter $\mathcal{H},$

\begin{equation}
\mathcal{H}\in\mathbb{H}\Longrightarrow\ad_{H\theta}\mathcal{H}=\ad_{\theta}\mathcal{H}\label{eq:adh_det}
\end{equation}
 We shall consider here the cases of
\begin{enumerate}
\item \emph{pretopologies }($\mathbb{H}=\mathbb{F}_{0}$ is the class of
\emph{principal filters}, and $H=\mathrm{S}_{0}$ is the \emph{pretopologizer}),
\item \emph{paratopologies} ($\mathbb{H}=\mathbb{F}_{1}$ is the class of
\emph{countably based filters}, and $H=\mathrm{S}_{1}$ is the \emph{paratopologizer}),
\item \emph{pseudotopologies} ($\mathbb{H}=\mathbb{F}$ is the class of
all filters, and $H=\mathrm{S}$ is the \emph{pseudotopologizer}).
\end{enumerate}
Topologies are adherence-determined in a generalized sense (see Appendix
\ref{subsec:Reflective-subclasses}).

\section{Characterizations of continuity}

We are now in a position to revisit Proposition \ref{prop:2} in the
context of convergences. It turns out that now the characterizations
in terms of adherences of filters $\mathcal{G}$ and $\mathcal{H}$
depend on the classes over which they range.

A class $\mathbb{J}$ of filters is said to be \emph{transferable}
(\footnote{In a broader context (see \cite{CFT}), they are called $\mathrm{\mathbb{F}}_{0}$-composable.})
if $\mathcal{J}\in\mathbb{J}$ implies that $R[\mathcal{J}]\in\mathbb{J}$
(see Appendix \ref{subsec:Relations}) for each relation $R$. In
particular, maps and their inverse relations are relations. Therefore,
transferable classes are initial. The classes of all filters, countably
based filters, and principal filters are transferable.
\begin{thm}
\label{thm:cont_in_conv} Let $\mathbb{J}$ be a transferable class
of filters. If $J$ is a $\mathbb{J}$-adherence-determined reflector
(\ref{eq:adh-determined}), then the following statements are equivalent:
\begin{enumerate}
\item $f\in C(J\xi,J\tau)$,
\item for each $\mathcal{H}\in\mathbb{J}$, 
\begin{equation}
\ad_{\xi}f^{-}[\mathcal{H}]\subset f^{-}(\ad_{\tau}\mathcal{H}),\label{eq:f-ad}
\end{equation}
\item for each $\mathcal{G}\in\mathbb{J}$, 
\begin{equation}
f(\ad_{\xi}\mathcal{G})\subset\ad_{\tau}f[\mathcal{G}].\label{eq:adf}
\end{equation}
\end{enumerate}
\end{thm}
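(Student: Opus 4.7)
The plan is to establish the cycle $(1)\Rightarrow(2)\Rightarrow(3)\Rightarrow(1)$, closely modelled on the proof of Proposition \ref{prop:2}, but now exploiting the adherence-determined formula (\ref{eq:adh-determined}) in tandem with the identity (\ref{eq:adh_det}). The latter lets us freely replace $\ad_{J\xi}\mathcal{H}$ by $\ad_\xi\mathcal{H}$ (and similarly for $\tau$) whenever $\mathcal{H}\in\mathbb{J}$, while transferability of $\mathbb{J}$ is what keeps us inside the class after applying either $f^-[\cdot]$ or $f[\cdot]$. The grill duality (\ref{eq:grill_dual}) provides the bridge between limits of $\mathcal{F}$ and meshing of $\mathcal{H}$ on the two sides of $f$.

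For $(1)\Rightarrow(2)$, I would fix $\mathcal{H}\in\mathbb{J}$ and $x\in\ad_\xi f^-[\mathcal{H}]$. Transferability places $f^-[\mathcal{H}]$ in $\mathbb{J}$, so (\ref{eq:adh_det}) gives $\ad_\xi f^-[\mathcal{H}]=\ad_{J\xi}f^-[\mathcal{H}]$. Hence there is a filter $\mathcal{F}$ with $x\in\lm_{J\xi}\mathcal{F}$ and $\mathcal{F}\#f^-[\mathcal{H}]$; the latter becomes $f[\mathcal{F}]\#\mathcal{H}$ by (\ref{eq:grill_dual}). Continuity under assumption (1) then gives $f(x)\in\lm_{J\tau}f[\mathcal{F}]\subset\ad_{J\tau}\mathcal{H}$, and a second application of (\ref{eq:adh_det}), now at $\tau$, rewrites this as $f(x)\in\ad_\tau\mathcal{H}$.

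The step $(2)\Rightarrow(3)$ is essentially the topological argument in Proposition \ref{prop:2}: for $\mathcal{G}\in\mathbb{J}$ transferability puts $f[\mathcal{G}]$ back in $\mathbb{J}$, so (2) applied with $\mathcal{H}:=f[\mathcal{G}]$ gives $\ad_\xi f^-[f[\mathcal{G}]]\subset f^-(\ad_\tau f[\mathcal{G}])$; combining this with $\mathcal{G}\geq f^-[f[\mathcal{G}]]$, antitonicity (\ref{eq:anti}), and the surjective-collapse $f[f^-[\cdot]]=\mathrm{id}$ from (\ref{eq:f-fsurH}) yields (3) verbatim.

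The genuinely new direction is $(3)\Rightarrow(1)$, where continuity must be recovered from adherence inequalities rather than used as a hypothesis. Given $x\in\lm_{J\xi}\mathcal{F}$, I would unfold $\lm_{J\tau}f[\mathcal{F}]$ via (\ref{eq:adh-determined}) and show $f(x)\in\ad_\tau\mathcal{K}$ for each $\mathcal{K}\in\mathbb{J}$ with $\mathcal{K}\#f[\mathcal{F}]$. For such $\mathcal{K}$, grill duality gives $f^-[\mathcal{K}]\#\mathcal{F}$ and transferability gives $f^-[\mathcal{K}]\in\mathbb{J}$; then the formula (\ref{eq:adh-determined}) for $x\in\lm_{J\xi}\mathcal{F}$ forces $x\in\ad_\xi f^-[\mathcal{K}]$. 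Finally, (3) applied with $\mathcal{G}:=f^-[\mathcal{K}]$ and the surjective identity $f[f^-[\mathcal{K}]]=\mathcal{K}$ conclude. The main subtlety to watch is keeping straight which identities require surjectivity of $f$ (the collapse $f[f^-[\cdot]]=\mathrm{id}$) and which require transferability of $\mathbb{J}$ (the membership $f^-[\mathcal{H}], f[\mathcal{G}]\in\mathbb{J}$); these hypotheses do distinct work at essentially every step.
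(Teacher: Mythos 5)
Your proof is correct and follows essentially the same route as the paper's: the same cycle $(1)\Rightarrow(2)\Rightarrow(3)\Rightarrow(1)$, with transferability, the identity (\ref{eq:adh_det}), grill duality (\ref{eq:grill_dual}) and the surjectivity of $f$ doing the same work in the same places. The only difference is presentational: where the paper routes $(1)\Rightarrow(2)$ and $(3)\Rightarrow(1)$ through the adherence formulas (\ref{eq:in_fin}) for $f\xi$ and $f^{-}\tau$ and the reflector calculus $J\xi\geq J(f^{-}\tau)\geq f^{-}(J\tau)$, you chase points and filters directly through (\ref{eq:adh-determined}), which is equally valid.
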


\begin{proof}
If $f\in C(J\xi,J\tau)$, that is, $f\xi\geq f(J\xi)\geq J\tau$,
hence, for each filter $\mathcal{H}$,
\begin{equation}
\ad_{f\xi}\mathcal{H}\subset\ad_{J\tau}\mathcal{H},\label{eq:H}
\end{equation}
and if $\mathcal{H}\in\mathbb{J}$, by (\ref{eq:adh_det}), $f(\ad_{\xi}f^{-}[\mathcal{H}])=\ad_{f\xi}\mathcal{H}\subset\ad_{\tau}\mathcal{H}$.
On applying $f^{-}$ to this inequality, we get (\ref{eq:f-ad}).

If $\mathcal{G}\in\mathbb{J}$, then $\mathcal{H}:=f[\mathcal{G}]\in\mathcal{\mathbb{J}}$
and $f^{-}[\mathcal{H}]=f^{-}[f[\mathcal{G}]]\leq\mathcal{G}$. Thus
(\ref{eq:f-ad}) implies $\ad_{\xi}\mathcal{G}\subset f^{-}(\ad_{\tau}f[\mathcal{G}])$,
thus on applying $f$, we recover (\ref{eq:adf}).

On applying $f^{-}$ to (\ref{eq:adf}), we assure that, for each
filter $\mathcal{G}\in\mathbb{J}$,
\begin{equation}
\ad_{\xi}\mathcal{G}\subset\ad_{f^{-}\tau}\mathcal{G}.\label{eq:G}
\end{equation}
Hence, for each filter $\mathcal{F}$,
\[
\lm_{\xi}\mathcal{F}\subset\bigcap_{\mathbb{J}\ni\mathcal{G}\#\mathcal{F}}\ad_{\xi}\mathcal{G}\subset\bigcap_{\mathbb{J}\ni\mathcal{G}\#\mathcal{F}}\ad_{f^{-}\tau}\mathcal{G}=\lm_{J(f^{-}\tau)}\mathcal{F},
\]
that is, $\xi\geq J(f^{-}\tau)$, hence $J\xi\geq J(f^{-}\tau)$ $\geq f^{-}(J\tau)$,
because $J$ is a reflector.
\end{proof}
\begin{rem}
In the case of topologies, continuity is equivalent to the formulas
above for every class $\mathbb{J}$ of filters that contains all principal
filters (see Propositions \ref{prop:1} and \ref{prop:2}, and the
comments at the end of the section). This is because $\mathrm{T}\leq J$
for every $\mathbb{J}$-adherence-determined reflector corresponding
to such a class $\mathbb{J}$.
\end{rem}

\section{Inverse continuity}
\begin{defn}
\label{def:classes}A surjective map $f:\left|\xi\right|\rightarrow\left|\tau\right|$
is called $\mathbb{H}$\emph{-quotient} if
\begin{equation}
f^{-}(\ad_{\tau}\mathcal{H})\subset\ad_{\xi}f^{-}[\mathcal{H}]\label{eq:quot_rough}
\end{equation}
holds for each $\mathcal{H}\in\mathbb{H}$, and it is called $\mathbb{G}$\emph{-perfect}
if
\begin{equation}
\ad_{\tau}f[\mathcal{G}]\subset f(\ad_{\xi}\mathcal{G})\label{eq:per}
\end{equation}
holds for each $\mathcal{G}\in\mathbb{G}$.
\end{defn}

Observe that (\ref{eq:quot_rough}) and (\ref{eq:per}) are obtained
from (\ref{eq:f-ad}) and (\ref{eq:adf}), respectively, by reversing
the inclusion.
\begin{prop}
\label{prop:transferable}Let $\mathbb{J}$ be a transferable class
of filters. If a surjective map is $\mathbb{J}$-perfect then it is
$\mathbb{J}$-quotient. 
\end{prop}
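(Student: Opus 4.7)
The plan is to apply the $\mathbb{J}$-perfectness hypothesis to the pullback filter $\mathcal{G}:=f^{-}[\mathcal{H}]$ of an arbitrary $\mathcal{H}\in\mathbb{J}$, and then to convert the resulting image-side inclusion into the desired preimage-side inclusion of Definition~\ref{def:classes}. Transferability is used exactly once, to keep $\mathcal{G}$ inside $\mathbb{J}$ so that perfectness may be invoked at it.

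Fix $\mathcal{H}\in\mathbb{J}$. Because $f^{-}$ is a relation and $\mathbb{J}$ is transferable, the filter $f^{-}[\mathcal{H}]$ again belongs to $\mathbb{J}$, so (\ref{eq:per}) may be applied with $\mathcal{G}:=f^{-}[\mathcal{H}]$, giving
\[
\ad_{\tau}f\bigl[f^{-}[\mathcal{H}]\bigr]\subset f\bigl(\ad_{\xi}f^{-}[\mathcal{H}]\bigr).
\]
Surjectivity of $f$ and identity (\ref{eq:f-fsurH}) collapse the inner composition $f[f^{-}[\mathcal{H}]]$ to $\mathcal{H}$, so this reduces to the intermediate inclusion
\[
\ad_{\tau}\mathcal{H}\subset f\bigl(\ad_{\xi}f^{-}[\mathcal{H}]\bigr).
\]

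Taking preimages under $f$ on both sides yields
\[
f^{-}(\ad_{\tau}\mathcal{H})\subset f^{-}\!\bigl(f(\ad_{\xi}f^{-}[\mathcal{H}])\bigr),
\]
and (\ref{eq:quot_rough}) follows provided the right-hand side coincides with $\ad_{\xi}f^{-}[\mathcal{H}]$, i.e., provided this adherence is $f$-saturated. The supporting observation is that $f^{-}[\mathcal{H}]$ admits a canonical base $\{f^{-}(H):H\in\mathcal{H}\}$ of $f$-saturated sets, so any witness filter $\mathcal{F}$ with $\mathcal{F}\#f^{-}[\mathcal{H}]$ may be coarsened to the saturated filter $f^{-}[f[\mathcal{F}]]$ without losing the mesh condition, and by (\ref{isotone}) its limits contain those of $\mathcal{F}$. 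Working with such saturated witnesses, an adherence point has its whole fibre inside the adherence, which identifies $\ad_{\xi}f^{-}[\mathcal{H}]$ with its own $f$-saturation.

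I expect the main obstacle to be precisely this concluding step -- the fibre-stability of $\ad_{\xi}f^{-}[\mathcal{H}]$. Everything above it is formal: transferability delivers the right $\mathcal{G}$, perfectness gives the image-side inclusion, and surjectivity via (\ref{eq:f-fsurH}) reduces $f[f^{-}[\mathcal{H}]]$ to $\mathcal{H}$. The genuine content is the interaction between the saturated base of $f^{-}[\mathcal{H}]$ and adherences in $\xi$, and that is where care is required in rendering the argument rigorous.
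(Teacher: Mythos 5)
Your opening moves coincide with the paper's own proof: transferability puts $f^{-}[\mathcal{H}]$ into $\mathbb{J}$, and (\ref{eq:per}) applied at $\mathcal{G}=f^{-}[\mathcal{H}]$, together with $f[f^{-}[\mathcal{H}]]=\mathcal{H}$ from (\ref{eq:f-fsurH}), yields
\[
\ad_{\tau}\mathcal{H}\subset f(\ad_{\xi}f^{-}[\mathcal{H}]).
\]
By (\ref{eq:in_fin}) the right-hand side equals $\ad_{f\xi}\mathcal{H}$, so this \emph{is} (\ref{eq:quot}), i.e.\ the form of the $\mathbb{H}$-quotient condition that the paper actually works with (equivalently, the fibre-meets formulation (\ref{eq:quot_form})). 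You should stop there: the proof is already complete in that sense.

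The concluding step you flag as the genuine content --- upgrading this to the literal inclusion (\ref{eq:quot_rough}) by showing that $\ad_{\xi}f^{-}[\mathcal{H}]$ is a union of fibres of $f$ --- is where the argument fails. Your justification reverses (\ref{isotone}): since $f^{-}[f[\mathcal{F}]]\leq\mathcal{F}$, isotony gives $\lm_{\xi}f^{-}[f[\mathcal{F}]]\subset\lm_{\xi}\mathcal{F}$, so replacing a witness $\mathcal{F}$ by its saturation can only \emph{lose} limit points; the saturated witness need not converge to $x$ at all. Moreover the saturation claim is simply false: take $X=\{a,b,c\}$, $Y=\{y,z\}$, $f(a)=f(b)=y$, $f(c)=z$, let $\xi$ be the topology on $X$ whose closed sets are $\varnothing$, $\{a,c\}$, $X$, and let $\tau$ be indiscrete on $Y$. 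Every filter on a finite set is principal, and $f(\cl_{\xi}A)=Y=\ad_{\tau}f(A)$ for every nonempty $A$, so $f$ is $\mathbb{J}$-perfect; yet for $\mathcal{H}=\{\{z\}\}^{\uparrow}$ one finds $\ad_{\xi}f^{-}[\mathcal{H}]=\cl_{\xi}\{c\}=\{a,c\}$, which is not a union of fibres, and $f^{-}(\ad_{\tau}\mathcal{H})=X\not\subset\{a,c\}$. So (\ref{eq:quot_rough}) read as a literal set inclusion can fail for a perfect map, and only (\ref{eq:quot_form})/(\ref{eq:quot}) is obtainable. (To be fair, the paper's own proof writes the same inclusion $f^{-}(f(\ad_{\xi}f^{-}[\mathcal{H}]))\subset\ad_{\xi}f^{-}[\mathcal{H}]$ and is open to the same objection; the safe reading throughout is that ``$\mathbb{H}$-quotient'' means (\ref{eq:quot}), and your intermediate inclusion already delivers exactly that.)
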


\begin{proof}
Assume (\ref{eq:per}), $\mathcal{H}\in\mathbb{J}$ and $x\in f^{-}(\ad_{\tau}\mathcal{H})$.
Let $\mathcal{G}:=f^{-}[\mathcal{H}]\in\mathbb{J}$ by transferability,
and $f[f^{-}[\mathcal{H}]]=\mathcal{H}$, because $f$ is surjective.
Then, by (\ref{eq:per}), $x\in f^{-}(\ad_{\tau}\mathcal{H})\subset f^{-}(f(\ad_{\xi}f^{-}[\mathcal{H}]))\subset\ad_{\xi}f^{-}[\mathcal{H}]$,
which is (\ref{eq:quot}).
\end{proof}
The converse is not true, as shows Example \ref{exa:classic}.
\begin{rem}
\label{rem:Iper-quot}If $\mathbb{J}$ is transferable, then a bijective
map is $\mathbb{J}$-quotient if and only if it is $\mathbb{J}$-perfect.
\end{rem}

In order to better appreciate the sense of the defined notions, let
us express them in terms of adherences with respect to final and initial
convergences. By using (\ref{eq:in_fin}), we conclude that (\ref{eq:quot_rough})
is equivalent to
\begin{equation}
\ad_{\tau}\mathcal{H}\subset\ad_{f\xi}\mathcal{H},\label{eq:quot}
\end{equation}
which is the inversion of (\ref{eq:H}), whereas (\ref{eq:per}) is
equivalent to
\begin{equation}
\ad_{f^{-}\tau}\mathcal{G}\subset f^{-}(f(\ad_{\xi}\mathcal{G})),\label{eq:perf}
\end{equation}
which is not the inversion of (\ref{eq:G})!
\begin{rem}
Contemplating this asymmetry, remember that the initial convergence
can be seen as a convergence of fibers rather than that of individual
points. I mean by \emph{convergence of fibers of $f$} the property
that $x\in\lm_{f^{-}\tau}\mathcal{F}$ implies that
\[
f^{-}(f(x))\subset\lm_{f^{-}\tau}f^{-}[f[\mathcal{F}]].
\]
Indeed, $x\in\lm_{f^{-}\tau}\mathcal{F}$ amounts to $f(x)\in\lm_{\tau}f[\mathcal{F}]$,
and thus $f^{-}(f(x))\subset f^{-}(\lm_{\tau}f[\mathcal{F}])=\lm_{f^{-}\tau}\mathcal{F}$.
On the other hand, $f(f^{-}(f(A)))=f(A)$ for each (not necessarily
surjective) map $f$ and each $A$, and thus

\[
\lm_{f^{-}\tau}f^{-}[f[\mathcal{F}]]=f^{-}(\lm_{\tau}f[f^{-}[f[\mathcal{F}]]])=f^{-}(\lm_{\tau}f[\mathcal{F}])=\lm_{f^{-}\tau}\mathcal{F}.
\]
\end{rem}

\begin{thm}
\label{thm:quotient}\cite{quest2} If $H$ is a reflector adherence-determined
by $\mathbb{H}$, then a map $f:\left|\xi\right|\rightarrow\left|\tau\right|$
is $\mathbb{H}$\emph{-quotient} if and only if
\begin{equation}
\tau\geq H(f\xi).\label{eq:quotient}
\end{equation}
\end{thm}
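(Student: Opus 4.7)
My plan is to work with the reformulation of the $\mathbb{H}$-quotient property already noted in the excerpt, namely that (\ref{eq:quot_rough}) is equivalent to
\[
\ad_{\tau}\mathcal{H}\subset\ad_{f\xi}\mathcal{H}\quad\text{for every }\mathcal{H}\in\mathbb{H},
\]
as in (\ref{eq:quot}). Once the $\mathbb{H}$-quotient condition is cast in this form, the rest is a short manipulation of the adherence-determined formula (\ref{eq:adh-determined}) together with the stability property (\ref{eq:adh_det}), and the axioms of a reflector.

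For the sufficiency, assume $\tau\geq H(f\xi)$. Since a finer convergence has smaller limits, it also has smaller adherences; hence, for every filter $\mathcal{H}$, $\ad_{\tau}\mathcal{H}\subset\ad_{H(f\xi)}\mathcal{H}$. Restricting to $\mathcal{H}\in\mathbb{H}$, property (\ref{eq:adh_det}) applied to the convergence $f\xi$ yields $\ad_{H(f\xi)}\mathcal{H}=\ad_{f\xi}\mathcal{H}$, which delivers (\ref{eq:quot}), i.e.\ $f$ is $\mathbb{H}$-quotient.

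For the necessity, assume (\ref{eq:quot}). Since $H$ is contractive, $H\tau\leq\tau$, hence $\lm_{\tau}\mathcal{F}\subset\lm_{H\tau}\mathcal{F}$ for every filter $\mathcal{F}$. The adherence-determined formula (\ref{eq:adh-determined}), applied first to $\tau$ and then to $f\xi$, gives
\[
\lm_{\tau}\mathcal{F}\subset\lm_{H\tau}\mathcal{F}=\bigcap_{\mathbb{H}\ni\mathcal{H}\#\mathcal{F}}\ad_{\tau}\mathcal{H}\subset\bigcap_{\mathbb{H}\ni\mathcal{H}\#\mathcal{F}}\ad_{f\xi}\mathcal{H}=\lm_{H(f\xi)}\mathcal{F},
\]
which is exactly $\tau\geq H(f\xi)$.

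The only subtle point, and hence the main thing to verify carefully, is the invocation of (\ref{eq:adh_det}) with $\theta$ replaced by $f\xi$: it requires that $\mathbb{H}$-adherences are computed the same in $f\xi$ and in its $H$-reflection. This is precisely what adherence-determination guarantees, so the argument is clean; no separate case analysis is needed, and no surjectivity hypothesis is used beyond that already embedded in the definition of $\mathbb{H}$-quotient through the equivalence (\ref{eq:quot_rough})$\Leftrightarrow$(\ref{eq:quot}).
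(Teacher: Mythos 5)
Your proposal is correct and follows essentially the same route as the paper: both pass through the reformulation (\ref{eq:quot}) and then apply the adherence-determined formula (\ref{eq:adh-determined}) to obtain the chain $\lm_{\tau}\mathcal{F}\subset\lm_{H\tau}\mathcal{F}\subset\lm_{H(f\xi)}\mathcal{F}$. The only difference is that you spell out the sufficiency direction explicitly via (\ref{eq:adh_det}) applied to $f\xi$, a step the paper compresses into a single ``if and only if''; this is a welcome clarification rather than a deviation.
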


\begin{proof}
By (\ref{eq:adh-determined}), (\ref{eq:quot}) holds for each $\mathcal{H}\in\mathbb{H}$,
if and only if, for each filter $\mathcal{F}$,
\[
\lm_{\tau}\mathcal{F}\subset\lm_{H\tau}\mathcal{F}=\bigcap_{\mathbb{H}\ni\mathcal{H}\#\mathcal{F}}\ad_{\tau}\mathcal{H}\subset\bigcap_{\mathbb{H}\ni\mathcal{H}\#\mathcal{F}}\ad_{f\xi}\mathcal{H}=\lm_{H(f\xi)}\mathcal{F}.
\]
\end{proof}

Formula (\ref{eq:quotient}) reveals that $\mathbb{H}$-quotient maps
are in fact quotient maps with respect to a reflector $H$ that is
\emph{adherence-determined} by $\mathbb{H}$. This way, (\ref{eq:quotient})
extends the definition to arbitrary concrete reflectors: $f$ is $H$\emph{-quotient}
whenever (\ref{eq:quotient}) holds. It is practical to use either
the term $H$\emph{-quotient} or $\mathbb{H}$\emph{-quotient} when
$H$ is adherence-determined by $\mathbb{H}$.

An important example of a non-adherence-determined reflector is the
identity reflector $\mathrm{I}$, the corresponding reflective class
of which is that of all convergences (\footnote{$\mathrm{I}$ is not adherence-determined, because the greatest adherence-determined
reflector is the pseudotopologizer $\mathrm{S}$, which adherence-determined
by the class $\mathbb{F}$ of all filters. Indeed, there exist convergences
that are not pseudotopologies (Example \ref{exa:non-pseudo}).}). Of course, a surjective map $f:\left|\xi\right|\rightarrow\left|\tau\right|$
is an $\mathrm{I}$-\emph{quotient map, }whenever
\begin{equation}
\tau\geq f\xi.\label{eq:conv_quot}
\end{equation}

Traditionally, $\mathrm{I}$-quotient maps are called \emph{almost
open} \cite{AVA1966}. It follows from the definition that (\footnote{Indeed, as $f\xi$ is the finest convergence on $\left|\tau\right|$,
for which $f$ is continuous, $y\in\lm_{f\xi}\mathcal{H}$ whenever
there exists $x\in f^{-}(y)$ and a filter $\mathcal{F}$ such that
$x\in\lm_{\xi}\mathcal{F}$ and $f[\mathcal{F}]=\mathcal{H}$.})
\begin{prop}
\label{prop:almost-open} A surjective map $f:\left|\xi\right|\rightarrow\left|\tau\right|$
is almost open if and only if for each filter $\mathcal{H}$ and each
$y\in\lm_{\tau}\mathcal{H}$, there exists $x\in f^{-}(y)$ and a
filter $\mathcal{F}$ such that $x\in\lm$$_{\xi}\mathcal{F}$ and
$f[\mathcal{F}]=\mathcal{H}$.
\end{prop}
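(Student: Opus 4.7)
The definition of almost-openness is $\tau \geq f\xi$, which by the infimum formula for convergences amounts to $\lm_\tau \mathcal{H} \subset \lm_{f\xi}\mathcal{H}$ for every filter $\mathcal{H}$. The plan is therefore to describe $\lm_{f\xi}\mathcal{H}$ explicitly and rephrase the two inclusions of the biconditional accordingly.

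For the easy direction (lifting implies almost open), fix $y \in \lm_\tau\mathcal{H}$. The hypothesis will furnish $x \in f^-(y)$ and $\mathcal{F}$ with $x \in \lm_\xi \mathcal{F}$ and $f[\mathcal{F}] = \mathcal{H}$. Since $f \in C(\xi, f\xi)$ by the very definition of the final convergence, continuity pushes this forward to $y = f(x) \in \lm_{f\xi} f[\mathcal{F}] = \lm_{f\xi}\mathcal{H}$, yielding $\tau \geq f\xi$.

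For the converse, assume $f$ is almost open and pick $y \in \lm_\tau \mathcal{H} \subset \lm_{f\xi}\mathcal{H}$. The characterization of $f\xi$ as the finest convergence making $f$ continuous (already indicated in the footnote preceding the proposition) provides $x \in f^-(y)$ and $\mathcal{F}$ with $x \in \lm_\xi \mathcal{F}$ and $f[\mathcal{F}] \leq \mathcal{H}$. The task then reduces to strengthening $\leq$ to $=$. The trick is to replace $\mathcal{F}$ with the refinement $\mathcal{F}' := \mathcal{F} \vee f^-[\mathcal{H}]$. This is a non-degenerate filter because $f[\mathcal{F}] \leq \mathcal{H}$ forces $f[\mathcal{F}] \# \mathcal{H}$, whence grill duality (\ref{eq:grill_dual}) yields $\mathcal{F} \# f^-[\mathcal{H}]$. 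Isotonicity of $\lm_\xi$ gives $x \in \lm_\xi \mathcal{F}'$, and the identity $f(F \cap f^-(H)) = f(F) \cap H$ (where surjectivity of $f$ enters) shows that the base $\{f(F) \cap H : F \in \mathcal{F},\, H \in \mathcal{H}\}$ of $f[\mathcal{F}']$ generates exactly $\mathcal{H}$.

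The principal subtlety is precisely this upgrade from $f[\mathcal{F}] \leq \mathcal{H}$ to $f[\mathcal{F}'] = \mathcal{H}$; everything else follows directly from the universal property of $f\xi$ or from routine isotonicity of the limit operator.
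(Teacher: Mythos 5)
Your proof is correct and follows essentially the same route as the paper, which simply asserts in a footnote the explicit description of the final convergence $f\xi$ (namely that $y\in\lm_{f\xi}\mathcal{H}$ iff there exist $x\in f^{-}(y)$ and $\mathcal{F}$ with $x\in\lm_{\xi}\mathcal{F}$ and $f[\mathcal{F}]=\mathcal{H}$) and reads the proposition off the inequality $\tau\geq f\xi$. Your refinement $\mathcal{F}\vee f^{-}[\mathcal{H}]$ correctly supplies the one detail the paper leaves implicit, the passage from $f[\mathcal{F}]\leq\mathcal{H}$ to an exact image.
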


Recall that a surjective map $f:\left|\xi\right|\rightarrow\left|\tau\right|$
is called \emph{open} provided that $f(O)$ is $\tau$-open for each
$\xi$-open subset $O$ of $\left|\xi\right|$. We shall see that
each open map (between topologies) is almost open.
\begin{prop}
\label{prop:open} A surjective map $f:\left|\xi\right|\rightarrow\left|\tau\right|$
is open provided that for each filter $\mathcal{H}$ and each $y\in\lm$$_{\tau}\mathcal{H}$
and for every $x\in f^{-}(y)$ there exists a filter $\mathcal{F}$
such that $x\in\lm$$_{\xi}\mathcal{F}$, and $f[\mathcal{F}]=\mathcal{H}$. 
\end{prop}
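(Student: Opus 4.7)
The plan is to use the convergence-theoretic definition of $\tau$-openness directly: to show that $f(O)$ is $\tau$-open whenever $O$ is $\xi$-open, I would verify that $f(O) \cap \lm_{\tau}\mathcal{H} \neq \varnothing$ forces $f(O) \in \mathcal{H}$ for every filter $\mathcal{H}$. This is the characterization given in Lemma \ref{lem:open} (and, for arbitrary convergences, it is the very definition of an open set). The hypothesis of the proposition is custom-tailored to lift such a convergence $y \in \lm_{\tau}\mathcal{H}$ to a convergence $x \in \lm_{\xi}\mathcal{F}$ through any preimage $x \in f^{-}(y)$, and the $\xi$-openness of $O$ will then force $O \in \mathcal{F}$.

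More concretely, fix a $\xi$-open set $O \subset |\xi|$ and a filter $\mathcal{H}$ on $|\tau|$ with $f(O) \cap \lm_{\tau}\mathcal{H} \neq \varnothing$. Pick $y$ in this intersection and, since $y \in f(O)$, pick $x \in O$ with $f(x) = y$. Here the crucial point is that the hypothesis quantifies over \emph{every} $x \in f^{-}(y)$, so we are free to use the particular preimage that happens to lie in $O$. Applying the hypothesis, obtain a filter $\mathcal{F}$ on $|\xi|$ with $x \in \lm_{\xi}\mathcal{F}$ and $f[\mathcal{F}] = \mathcal{H}$. Then $O \cap \lm_{\xi}\mathcal{F} \ni x$, so $\xi$-openness of $O$ yields $O \in \mathcal{F}$, whence $f(O) \in f[\mathcal{F}] = \mathcal{H}$, as required.

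There is no real obstacle here: the argument is essentially a one-step unwinding of the definition of open set for a convergence, combined with the lifting hypothesis. The only subtle point worth underlining is the role of the universal quantifier over preimages $x \in f^{-}(y)$, which is exactly what distinguishes openness from the weaker property of almost openness in Proposition \ref{prop:almost-open} (where an existential quantifier would suffice). This contrast can be pointed out in a brief remark after the proof to motivate why this stronger hypothesis yields the stronger conclusion.
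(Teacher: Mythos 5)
Your argument is correct and coincides with the paper's own proof: both pick $y\in f(O)\cap\lm_{\tau}\mathcal{H}$, choose the preimage $x\in f^{-}(y)\cap O$, lift $\mathcal{H}$ to a filter $\mathcal{F}$ converging to $x$ with $f[\mathcal{F}]=\mathcal{H}$, and conclude $O\in\mathcal{F}$, hence $f(O)\in\mathcal{H}$. Your added remark on the universal versus existential quantifier over preimages correctly identifies the distinction from almost openness.
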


\begin{proof}
Suppose that the condition holds, and $O$ is $\xi$-open. If $y\in f(O)\cap\lm_{\tau}\mathcal{H}$,
then there exists $x\in f^{-}(y)\cap O$. On the other hand, there
exists a filter $\mathcal{F}$ such that $x\in\lm$$_{\xi}\mathcal{F}$,
and $f[\mathcal{F}]=\mathcal{H}$. As $O$ is $\xi$-open, $O\in\mathcal{F}$
and thus $f(O)\in f[\mathcal{F}]=\mathcal{H}$, proving that $f(O)$
is $\tau$-open.
\end{proof}
If $\xi$ is a topology, then the converse also holds \cite[p. 398]{CFT}.

\section{Characterizations in terms of covers\label{sec:Characterizations-in-terms}}

\subsection{Covers, inherences, adherences}

Open covers are not an adequate tool in general convergences, because
open sets do not determine non-topological convergences. Therefore,
the notion of cover need be extended.

Let $\xi$ be a convergence. A family $\mathcal{P}$ of sets is a
\emph{$\xi$-cover} of $A$ if $\mathcal{P}\cap\mathcal{F}\neq\varnothing$
for every filter $\mathcal{F}$ such that $A\cap\lm_{\xi}\mathcal{F}\neq\varnothing$. 

The concept of \emph{cover} is closely related to those of \emph{adherence}
and \emph{inherence}. If $\mathcal{P}$ be a family of subsets of
$X$, then $\mathcal{P}_{c}:=\{X\setminus P:P\in\mathcal{P}\}.$ The
inherence $\ih_{\xi}\mathcal{P}$ of a family $\mathcal{P}$ is defined
by
\begin{equation}
\ih_{\xi}\mathcal{P}:=(\ad_{\xi}\mathcal{P}_{c})^{c}.\label{eq:inh}
\end{equation}

Observe that if $\mathcal{P}$ a family and $\mathcal{F}$ is a filter,
then
\begin{equation}
\mathcal{P}\cap\mathcal{F}=\varnothing\Longleftrightarrow\mathcal{P}_{c}\#\mathcal{F}.\label{eq:grill}
\end{equation}

The following theorem constitute a special case of \cite[Theorem 3.1]{D.comp}.
\begin{thm}[{\cite[p. 69]{CFT}}]
\label{thm:duality} The following statements are equivalent:
\begin{enumerate}
\item $\mathcal{P}$ is a $\xi$-cover of $A$,
\item $A\subset\ih_{\xi}\mathcal{P}$,
\item $\ad_{\xi}\mathcal{P}_{c}\cap A=\varnothing$.
\end{enumerate}
\end{thm}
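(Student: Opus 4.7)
The plan is to show the chain of equivalences (2) $\Leftrightarrow$ (3) $\Leftrightarrow$ (1), where the first equivalence is essentially a formal consequence of the definition of inherence, and the second leverages the grill duality (\ref{eq:grill}) together with the definition of adherence.

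First, I would establish (2) $\Leftrightarrow$ (3) directly from (\ref{eq:inh}): since $\ih_{\xi}\mathcal{P}=(\ad_{\xi}\mathcal{P}_{c})^{c}$, the inclusion $A\subset\ih_{\xi}\mathcal{P}$ is, by taking complements, equivalent to $A$ being disjoint from $\ad_{\xi}\mathcal{P}_{c}$. No real content here, merely set-theoretic complementation.

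Next, I would prove (1) $\Leftrightarrow$ (3) by contrapositive. Unwinding the definition, $\mathcal{P}$ fails to be a $\xi$-cover of $A$ precisely when there exists a filter $\mathcal{F}$ with $A\cap\lm_{\xi}\mathcal{F}\neq\varnothing$ and $\mathcal{P}\cap\mathcal{F}=\varnothing$. By (\ref{eq:grill}), the second condition is equivalent to $\mathcal{P}_{c}\#\mathcal{F}$. Therefore the negation of (1) reads: there is a filter $\mathcal{F}$ with $\mathcal{P}_{c}\#\mathcal{F}$ and a point in $A\cap\lm_{\xi}\mathcal{F}$. By the defining formula (\ref{eq:adh}) for adherence, such a point belongs to $A\cap\ad_{\xi}\mathcal{P}_{c}$, which is exactly the negation of (3). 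Running the argument in reverse, given a point $x\in A\cap\ad_{\xi}\mathcal{P}_{c}$, (\ref{eq:adh}) supplies a filter $\mathcal{F}$ with $\mathcal{F}\#\mathcal{P}_{c}$ and $x\in\lm_{\xi}\mathcal{F}$; applying (\ref{eq:grill}) again converts $\mathcal{F}\#\mathcal{P}_{c}$ back to $\mathcal{P}\cap\mathcal{F}=\varnothing$, witnessing the failure of the cover condition.

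There is no real obstacle: the entire proof is a careful translation between three equivalent languages (covers, inherence, adherence), with the grill duality (\ref{eq:grill}) serving as the pivot that connects the ``combinatorial'' formulation in (1) with the ``adherence'' formulation in (3). The only point that requires a moment's care is the correct handling of the quantifiers when passing to the contrapositive, and keeping straight that (\ref{eq:adh}) ranges over all filters meshing with the given family, so that the existence of a witnessing filter is tantamount to non-emptiness of the intersection in (3).
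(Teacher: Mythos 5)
Your proposal is correct and follows essentially the same route as the paper: the equivalence $(1)\Leftrightarrow(3)$ via contraposition and the grill duality (\ref{eq:grill}) combined with the definition (\ref{eq:adh}) of adherence, and $(2)\Leftrightarrow(3)$ by complementation from (\ref{eq:inh}). No further comment is needed.
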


\begin{proof}
By definition, (1) holds whenever $\mathcal{P}\cap\mathcal{F}\neq\varnothing$
for every filter $\mathcal{F}$ such that $A\cap\lm_{\xi}\mathcal{F}\neq\varnothing$.
Equivalently, if $\mathcal{P}\cap\mathcal{F}=\varnothing$ then $A\cap\lm_{\xi}\mathcal{F}=\varnothing$
for each filter $\mathcal{F}$. By (\ref{eq:grill}), $\mathcal{P}_{c}\#\mathcal{F}$
implies that $A\cap\lm_{\xi}\mathcal{F}=\varnothing$, hence (1) and
(3) are equivalent. Now (3) amounts to $(\ih_{\xi}\mathcal{P})^{c}\cap A=\varnothing$,
that is, to (2). 
\end{proof}
Consequently, we shall use the symbol
\[
A\subset\ih_{\xi}\mathcal{P}
\]
for ``$\mathcal{P}$ is a $\xi$-cover of $A$''.
\begin{rem}
\label{rem:cover_top}In order to relate this notion to open covers,
observe that if $\xi$ is a topology, then $\mathcal{P}$ is a \emph{$\xi$-}cover
of $A$ whenever $A\subset\bigcup_{P\in\mathcal{P}}\it_{\xi}P,$ where
$\it_{\xi}P$ is the $\xi$-\emph{interior} of $P$. 
\end{rem}

\subsection{Characterizations}

Proposition \ref{prop:rephrase} below rephrases Definition \ref{def:classes}
in terms of the original definitions, introduced in \cite{quest2,D.comp}.
\begin{prop}
\label{prop:rephrase} A surjective map $f$ is $\mathbb{H}$-quotient
if and only if, for each $\mathcal{H}\in\mathbb{H}$ and each $y$,
\begin{equation}
y\in\ad_{\tau}\mathcal{H}\Longrightarrow f^{-}(y)\cap\ad_{\xi}f^{-}[\mathcal{H}]\neq\varnothing;\label{eq:quot_form}
\end{equation}
it is $\mathbb{G}$-perfect if and only if, for each $\mathcal{G}\in\mathbb{G}$
and each $y$,
\begin{equation}
y\in\ad_{\tau}f[\mathcal{G}]\Longrightarrow f^{-}(y)\cap\ad_{\xi}\mathcal{G}\neq\varnothing.\label{eq:perf_form}
\end{equation}
\end{prop}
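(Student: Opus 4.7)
The plan is to unpack the defining set inclusions of $\mathbb{H}$-quotient and $\mathbb{G}$-perfect maps --- namely (\ref{eq:quot_rough}) and (\ref{eq:per}) --- pointwise, exploiting the surjectivity of $f$ together with the adherence formula (\ref{eq:in_fin}) for final convergences.

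The $\mathbb{G}$-perfect half is essentially immediate. The inclusion (\ref{eq:per}), $\ad_{\tau}f[\mathcal{G}]\subset f(\ad_{\xi}\mathcal{G})$, asserts precisely that each $y$ in the left-hand side is the image under $f$ of some point of $\ad_{\xi}\mathcal{G}$; equivalently, $f^{-}(y)\cap\ad_{\xi}\mathcal{G}\neq\varnothing$, which is exactly (\ref{eq:perf_form}). No additional ingredient is required beyond the definition of direct image.

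The $\mathbb{H}$-quotient half takes one preparatory step. I first rewrite the defining inclusion (\ref{eq:quot_rough}) into the equivalent form (\ref{eq:quot}), as observed just after Definition \ref{def:classes}: applying $f$ and using surjectivity (so that $f(f^{-}(B))=B$) together with the first formula in (\ref{eq:in_fin}), the condition $f^{-}(\ad_{\tau}\mathcal{H})\subset\ad_{\xi}f^{-}[\mathcal{H}]$ becomes $\ad_{\tau}\mathcal{H}\subset f(\ad_{\xi}f^{-}[\mathcal{H}])=\ad_{f\xi}\mathcal{H}$. This is now of the same form as in the perfect case, so an identical pointwise unpacking yields (\ref{eq:quot_form}): $y\in\ad_{\tau}\mathcal{H}$ lies in $f(\ad_{\xi}f^{-}[\mathcal{H}])$ if and only if $f^{-}(y)\cap\ad_{\xi}f^{-}[\mathcal{H}]\neq\varnothing$.

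There is no genuine obstacle here; the proposition is merely a restatement of the defining inclusions in fibrewise existential language. The only mildly delicate point is to keep track of where surjectivity is used --- specifically, in passing between (\ref{eq:quot_rough}) and (\ref{eq:quot}) via (\ref{eq:in_fin}), and in noting that $f^{-}(y)$ is nonempty for every $y\in|\tau|$, so that the ``meets'' conditions in (\ref{eq:quot_form}) and (\ref{eq:perf_form}) are genuine, non-vacuous assertions about each fibre.
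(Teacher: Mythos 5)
Your treatment of the $\mathbb{G}$-perfect half is complete and correct: the inclusion $\ad_{\tau}f[\mathcal{G}]\subset f(\ad_{\xi}\mathcal{G})$ is, set-theoretically, the exact statement that $f^{-}(y)\cap\ad_{\xi}\mathcal{G}\neq\varnothing$ for every $y$ in the left-hand side, and nothing further is needed. The $\mathbb{H}$-quotient half, however, has a genuine gap, inherited from the remark after Definition \ref{def:classes} that you invoke. The passage from (\ref{eq:quot_rough}) to (\ref{eq:quot}) is only an implication, not an equivalence. Applying $f$ to $f^{-}(\ad_{\tau}\mathcal{H})\subset\ad_{\xi}f^{-}[\mathcal{H}]$ and using $f(f^{-}(B))=B$ does give $\ad_{\tau}\mathcal{H}\subset f(\ad_{\xi}f^{-}[\mathcal{H}])=\ad_{f\xi}\mathcal{H}$, which is (\ref{eq:quot_form}) read pointwise. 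But in the converse direction, applying $f^{-}$ to $\ad_{\tau}\mathcal{H}\subset f(\ad_{\xi}f^{-}[\mathcal{H}])$ yields only $f^{-}(\ad_{\tau}\mathcal{H})\subset f^{-}\bigl(f(\ad_{\xi}f^{-}[\mathcal{H}])\bigr)$, and the right-hand side \emph{contains} $\ad_{\xi}f^{-}[\mathcal{H}]$ rather than being contained in it; the two coincide only when $\ad_{\xi}f^{-}[\mathcal{H}]$ is saturated, i.e., a union of fibres of $f$. That is not automatic: although every member of $f^{-}[\mathcal{H}]$ is saturated, the adherence is a union of limit sets $\lm_{\xi}\mathcal{F}$ over filters $\mathcal{F}$ meshing $f^{-}[\mathcal{H}]$, and in a general convergence one point of a fibre can be such a limit while another point of the same fibre is not. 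A three-point domain with a two-point fibre and a suitably chosen $\lm_{\xi}$ already yields a surjection satisfying (\ref{eq:quot_form}) for every principal filter while violating (\ref{eq:quot_rough}) for one of them. A classical sanity check points the same way: for topologies, (\ref{eq:quot_rough}) over all principal filters is condition (\ref{eq:4}), while (\ref{eq:quot_form}) over all principal filters is the definition of hereditarily quotient (pseudo-open) maps, and these notions are classically distinct.

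In fairness, the paper offers no proof of this proposition at all --- it is presented as a mere rephrasing --- and the same one-directional step is asserted without comment in the text preceding Theorem \ref{thm:quotient}. Everything downstream (Theorem \ref{thm:quotient}, the cover characterizations, the list of classical variants) works with the fibrewise condition (\ref{eq:quot_form}), equivalently (\ref{eq:quot}); that is evidently the operative definition of $\mathbb{H}$-quotient, and read that way your pointwise unpacking is exactly right and the proposition is immediate. But taking Definition \ref{def:classes} literally, your argument (like the paper's claim) needs either the missing saturation of $\ad_{\xi}f^{-}[\mathcal{H}]$ --- which fails in general --- or an explicit replacement of (\ref{eq:quot_rough}) by (\ref{eq:quot}) as the definition; only the implication from (\ref{eq:quot_rough}) to (\ref{eq:quot_form}) is actually established.
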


We are now in a position to characterize quotient-like and perfect-like
maps in terms of covers.

We shall see that for a transferable class $\mathbb{H}$, a surjective
map $f$ is $\mathbb{H}$-quotient if and only if the image under
$f$ of every cover (from the dual class of $\mathbb{H}$) of $f^{-}(y)$
is a cover of $y$. More precisely,
\begin{prop}
\label{prop:quot_cov} Let $\mathbb{H}$ be transferable. A surjective
map $f$ is $\mathbb{H}$-quotient if and only if, for each $\mathcal{Q}$
such that $\mathcal{Q}_{c}\in\mathbb{H}$ and each $y$,
\begin{equation}
\ih_{\xi}\mathcal{Q}\supset f^{-}(y)\Longrightarrow y\in\ih_{\tau}f[\mathcal{Q}].\label{eq:qu_cov}
\end{equation}
\end{prop}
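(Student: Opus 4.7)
The plan is to derive the equivalence directly from the adherence characterization in Proposition \ref{prop:rephrase} by dualizing via (\ref{eq:inh}) together with the identity $(f^-[\mathcal{B}])_c = f^-[\mathcal{B}_c]$. Throughout, anti-monotonicity of $\ih$ in the refinement ordering (an immediate consequence of (\ref{eq:anti}) once one notes that $\mathcal{A}\leq\mathcal{D}$ forces $\mathcal{D}_c\leq\mathcal{A}_c$) is the main technical tool.

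For the sufficiency of (\ref{eq:qu_cov}), given $\mathcal{H}\in\mathbb{H}$ I set $\mathcal{Q}:=f^-[\mathcal{H}_c]$. Then $\mathcal{Q}_c=f^-[\mathcal{H}]\in\mathbb{H}$ by transferability applied to the relation $f^-$, and $f[\mathcal{Q}]=\mathcal{H}_c$ by surjectivity via (\ref{eq:f-fsurH}). On taking complements, the implication (\ref{eq:qu_cov}) translates exactly into the contrapositive of (\ref{eq:quot_form}) for this $\mathcal{H}$, which is $\mathbb{H}$-quotientness by Proposition \ref{prop:rephrase}.

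For the necessity, let $\mathcal{Q}$ satisfy $\mathcal{Q}_c\in\mathbb{H}$ and set $\mathcal{H}_0:=f[\mathcal{Q}_c]\in\mathbb{H}$ (again by transferability). The crux is to verify two refinement inequalities: $(f^-[\mathcal{H}_0])_c\leq\mathcal{Q}$ and $f[\mathcal{Q}]\leq(\mathcal{H}_0)_c$. The former rests on the identity $X\setminus f^-(f(X\setminus Q))=\{x:f^-(f(x))\subset Q\}\subset Q$ combined with the closure of $\mathcal{Q}$ under subsets (a consequence of $\mathcal{Q}_c$ being a filter, hence closed under supersets), while the latter follows from $\{y:f^-(y)\subset Q\}\subset f(Q)$, which uses surjectivity so that $f^-(y)$ is non-empty. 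Anti-monotonicity of $\ih$ then yields $\ih_\xi\mathcal{Q}\subset\ih_\xi(f^-[\mathcal{H}_0])_c$ and $\ih_\tau(\mathcal{H}_0)_c\subset\ih_\tau f[\mathcal{Q}]$. Applying (\ref{eq:quot_form}) for $\mathcal{H}_0$ in its contrapositive cover form (which reads $f^-(y)\subset\ih_\xi(f^-[\mathcal{H}_0])_c\Longrightarrow y\in\ih_\tau(\mathcal{H}_0)_c$) now sandwiches the hypothesis $f^-(y)\subset\ih_\xi\mathcal{Q}$ and produces the required conclusion $y\in\ih_\tau f[\mathcal{Q}]$.

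The main obstacle is this necessity direction: the naive substitution $\mathcal{H}:=(f[\mathcal{Q}])_c$ into Proposition \ref{prop:rephrase} is unavailable because this family need not belong to $\mathbb{H}$; one must instead detour through $\mathcal{H}_0=f[\mathcal{Q}_c]$ and exploit the filter structure of $\mathcal{Q}_c$ precisely to bridge the refinement gap between $\mathcal{Q}$, $(f^-[\mathcal{H}_0])_c$, and $(\mathcal{H}_0)_c$.
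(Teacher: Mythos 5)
Your sufficiency direction (the cover condition implies $\mathbb{H}$-quotient) is correct and is precisely the substitution $\mathcal{Q}:=f^{-}[\mathcal{H}_{c}]$ that the paper itself performs. The necessity direction, however, fails at the inclusion $\ih_{\xi}\mathcal{Q}\subset\ih_{\xi}(f^{-}[\mathcal{H}_{0}])_{c}$. The duality you invoke --- that $\mathcal{A}\leq\mathcal{D}$ forces $\mathcal{D}_{c}\leq\mathcal{A}_{c}$ --- is false: the first statement says every $A\in\mathcal{A}$ \emph{contains} some $D\in\mathcal{D}$, the second that every $D\in\mathcal{D}$ is \emph{contained} in some $A\in\mathcal{A}$, and neither implies the other. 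In your situation the comparison in fact goes the wrong way: by (\ref{eq:f-fsurH}), $f^{-}[\mathcal{H}_{0}]=f^{-}[f[\mathcal{Q}_{c}]]$ is \emph{coarser} than $\mathcal{Q}_{c}$ (each $f^{-}(f(Q^{c}))$ contains $Q^{c}$), so (\ref{eq:anti}) yields $\ad_{\xi}\mathcal{Q}_{c}\subset\ad_{\xi}f^{-}[\mathcal{H}_{0}]$, that is, $\ih_{\xi}(f^{-}[\mathcal{H}_{0}])_{c}\subset\ih_{\xi}\mathcal{Q}$ --- the reverse of what you need. Concretely, let $X=\{a,b\}$ be discrete, $Y=\{\ast\}$, $f$ the constant map, and $\mathcal{Q}$ the principal ideal of $\{a\}$: then $\ih_{\xi}\mathcal{Q}=\{a\}$, whereas $f^{-}[\mathcal{H}_{0}]=\{X\}$ and $\ih_{\xi}(f^{-}[\mathcal{H}_{0}])_{c}=\varnothing$, so the hypothesis $f^{-}(y)\subset\ih_{\xi}\mathcal{Q}$ cannot be transported to the cover associated with $\mathcal{H}_{0}$.

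The substitution you rejected is actually the right one: take $\mathcal{H}$ to be the filter generated by $(f[\mathcal{Q}])_{c}=\{Y\setminus f(Q):Q\in\mathcal{Q}\}$ (a filter base, because $\mathcal{Q}$ is an ideal). Then $f^{-}(Y\setminus f(Q))=X\setminus f^{-}(f(Q))\subset Q^{c}$, so $f^{-}[\mathcal{H}]$ is \emph{finer} than $\mathcal{Q}_{c}$ and $\ad_{\xi}f^{-}[\mathcal{H}]\subset\ad_{\xi}\mathcal{Q}_{c}$, while $\ad_{\tau}(f[\mathcal{Q}])_{c}\subset\ad_{\tau}\mathcal{H}$ is trivial; applying (\ref{eq:quot_form}) to this $\mathcal{H}$ then gives (\ref{eq:qu_cov}). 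Your worry that $(f[\mathcal{Q}])_{c}$ need not belong to $\mathbb{H}$ is a legitimate one (it is not in general of the form $R[\mathcal{Q}_{c}]$ for a relation $R$), but the membership does hold for every class actually in play: if $\mathcal{B}$ is a base of $\mathcal{Q}_{c}$, then $\{\{y:f^{-}(y)\subset B\}:B\in\mathcal{B}\}$ is a base of $\mathcal{H}$ of the same cardinality, so $\mathcal{H}$ is principal, countably based, or arbitrary according as $\mathcal{Q}_{c}$ is. This point (or, alternatively, a tacit restriction of (\ref{eq:qu_cov}) to covers of the form $f^{-}[\mathcal{H}_{c}]$, which is all the paper's own one-line substitution literally treats) is what the argument must rest on; your detour through $f[\mathcal{Q}_{c}]$ does not close the gap, because its preimage $f^{-}[\mathcal{H}_{0}]$ is coarser, not finer, than $\mathcal{Q}_{c}$.
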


\begin{proof}
Indeed, set $\mathcal{Q}=f^{-}[\mathcal{H}_{c}]=f^{-}[\mathcal{H}]_{c}$
in (\ref{eq:quot_form}). Then $\mathcal{H}_{c}=f[f^{-}[\mathcal{H}_{c}]]=f[\mathcal{Q}]$.

By (\ref{eq:inh}), $y\in\ad_{\tau}f[\mathcal{Q}]_{c}$ implies $f^{-}(y)\cap\ad_{\xi}\mathcal{Q}_{c}\neq\varnothing$,
we infer that $y\notin\ih_{\tau}f[\mathcal{Q}]$ implies $f^{-}(y)\cap(\ih_{\xi}\mathcal{Q})^{c}\neq\varnothing$,
which is equivalent to (\ref{eq:perf_form}).
\end{proof}
We shall now see that if $\mathbb{G}$ is transferable, then $f$
is $\mathbb{G}$-perfect if and only if $\mathcal{P}$ is a cover
of $y$ provided that the preimage under $f$ of a family $\mathcal{P}$
(from the dual class of $\mathbb{G}$) is a cover of $f^{-}(y)$.
More precisely,
\begin{prop}
\label{prop:perf_cov} Let $\mathbb{G}$ be transferable. A surjective
map $f$ is $\mathbb{G}$-perfect if and only if, for each $\mathcal{P}$
such that $\mathcal{P}_{c}\in\mathbb{G}$ and each $y$,
\begin{equation}
\ih_{\xi}f^{-}[\mathcal{P}]\supset f^{-}(y)\Longrightarrow y\in\ih_{\tau}\mathcal{P}.\label{eq:per_cov}
\end{equation}
\end{prop}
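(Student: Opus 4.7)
The plan is to parallel the proof of Proposition~\ref{prop:quot_cov}: rewrite (\ref{eq:per_cov}) in adherence language via the duality $\ih_{\theta}\mathcal{A}=(\ad_{\theta}\mathcal{A}_{c})^{c}$ from (\ref{eq:inh}), and match the result against the characterization (\ref{eq:perf_form}) of $\mathbb{G}$-perfect supplied by Proposition~\ref{prop:rephrase}. The bridge between the two index conditions ($\mathcal{G}\in\mathbb{G}$ on $|\xi|$ versus $\mathcal{P}_{c}\in\mathbb{G}$ on $|\tau|$) is the substitution $\mathcal{G}=f^{-}[\mathcal{P}_{c}]$ in one direction and $\mathcal{P}=f[\mathcal{G}]_{c}$ in the other, mediated by the elementary identity $(f^{-}[\mathcal{P}])_{c}=f^{-}[\mathcal{P}_{c}]$.

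For the implication $\mathbb{G}$-perfect $\Longrightarrow$ (\ref{eq:per_cov}), I would take any $\mathcal{P}$ on $|\tau|$ with $\mathcal{P}_{c}\in\mathbb{G}$ and set $\mathcal{G}:=f^{-}[\mathcal{P}_{c}]$. Transferability places $\mathcal{G}$ in $\mathbb{G}$; surjectivity yields $f[\mathcal{G}]=\mathcal{P}_{c}$; and $(f^{-}[\mathcal{P}])_{c}=f^{-}[\mathcal{P}_{c}]=\mathcal{G}$. Thus $\ih_{\xi}f^{-}[\mathcal{P}]\supset f^{-}(y)$ reads $f^{-}(y)\cap\ad_{\xi}\mathcal{G}=\varnothing$, while $y\in\ih_{\tau}\mathcal{P}$ reads $y\notin\ad_{\tau}f[\mathcal{G}]$. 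The contrapositive of (\ref{eq:perf_form}) then delivers (\ref{eq:per_cov}) for this $\mathcal{P}$. For the converse, given $\mathcal{G}\in\mathbb{G}$, the plan is to set $\mathcal{P}:=f[\mathcal{G}]_{c}$, observe that $\mathcal{P}_{c}=f[\mathcal{G}]\in\mathbb{G}$ by transferability, and run the same translation backward, applying (\ref{eq:per_cov}) to this $\mathcal{P}$ and re-expressing the inherences as adherences to recover (\ref{eq:perf_form}).

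The principal obstacle I foresee is purely bookkeeping: $\mathcal{G}$ lives over $|\xi|$ while $\mathcal{P}$ lives over $|\tau|$, so at each step one must check on which side of $f$ the family sits and that complementation commutes with $f^{-}$ as claimed. The two structural ingredients doing the work are the identity $(f^{-}[\mathcal{P}])_{c}=f^{-}[\mathcal{P}_{c}]$ together with surjectivity of $f$ (giving $f[f^{-}[\cdot]]=\mathrm{id}$), which make the round trips $\mathcal{P}\mapsto f^{-}[\mathcal{P}_{c}]$ and $\mathcal{G}\mapsto f[\mathcal{G}]_{c}$ behave compatibly with membership in $\mathbb{G}$. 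Once this bookkeeping is carried out carefully, the proof reduces to two applications of the inherence--adherence duality (\ref{eq:inh}) followed by an appeal to Proposition~\ref{prop:rephrase}.
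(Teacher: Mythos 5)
Your first direction is exactly the paper's argument: the substitution $\mathcal{G}=f^{-}[\mathcal{P}_{c}]=(f^{-}[\mathcal{P}])_{c}$, transferability, surjectivity, and the duality (\ref{eq:inh}) turn (\ref{eq:perf_form}) into (\ref{eq:per_cov}). That part is correct.

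The converse, which you leave as ``run the same translation backward,'' contains a genuine gap that no amount of bookkeeping will close. With $\mathcal{P}:=f[\mathcal{G}]_{c}$ you get $(f^{-}[\mathcal{P}])_{c}=f^{-}[\mathcal{P}_{c}]=f^{-}[f[\mathcal{G}]]$, and by (\ref{eq:f-fsurH}) the filter $f^{-}[f[\mathcal{G}]]$ is \emph{coarser} than $\mathcal{G}$, so (\ref{eq:anti}) gives $\ad_{\xi}\mathcal{G}\subset\ad_{\xi}f^{-}[f[\mathcal{G}]]$ --- the inclusion points the wrong way. Translating (\ref{eq:per_cov}) back therefore yields only $y\in\ad_{\tau}f[\mathcal{G}]\Longrightarrow f^{-}(y)\cap\ad_{\xi}f^{-}[f[\mathcal{G}]]\neq\varnothing$, which is weaker than (\ref{eq:perf_form}). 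The obstruction is structural: every family $f^{-}[\mathcal{P}]$ with $\mathcal{P}_{c}\in\mathbb{G}$ is of the form $f^{-}[\mathcal{H}_{c}]$ with $\mathcal{H}:=\mathcal{P}_{c}\in\mathbb{G}$, so after the duality of Theorem \ref{thm:duality} condition (\ref{eq:per_cov}) is verbatim the $\mathbb{G}$-quotient condition (\ref{eq:quot_form}); by Proposition \ref{prop:transferable} and Example \ref{exa:classic} that is strictly weaker than $\mathbb{G}$-perfectness (the map of Example \ref{exa:classic} is open, hence satisfies (\ref{eq:per_cov}) for each of the listed classes, yet is not closed, hence not $\mathbb{G}$-perfect). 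Be aware that the paper's own proof consists of nothing but the forward substitution, so you have reproduced it as far as it goes; to get a correct cover characterization of $\mathbb{G}$-perfectness one should let the cover $\mathcal{R}$ range over families on $\left|\xi\right|$ with $\mathcal{R}_{c}\in\mathbb{G}$ and conclude that the family of small images $(f(R^{c}))^{c}$, $R\in\mathcal{R}$, covers $y$; with that modification the translation back to (\ref{eq:perf_form}) is exact, and transferability is not even needed.
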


\begin{proof}
In fact, by setting $\mathcal{G}=f^{-}[\mathcal{P}_{c}]=f^{-}[\mathcal{P}]_{c}$,
hence $f[\mathcal{G}]=f[f^{-}[\mathcal{P}_{c}]]=\mathcal{P}_{c}$.
Thus (\ref{eq:perf_form}) becomes $y\in\ad_{\tau}\mathcal{P}_{c}\Longrightarrow f^{-}(y)\cap\ad_{\xi}f^{-}[\mathcal{P}]_{c}\neq\varnothing$.
By (\ref{eq:inh}), $y\notin\ih_{\tau}\mathcal{P}$ implies $f^{-}(y)\cap(\ih_{\xi}f^{-}[\mathcal{P}])^{c}\neq\varnothing$,
that is, (\ref{eq:per_cov}).
\end{proof}
\begin{rem}
Notice that in (\ref{eq:per_cov}) and (\ref{eq:qu_cov}) above, covers
are either ideals (closed under finite unions and subsets) or ideal
bases.
\end{rem}

\section{Variants of quotient maps}

Many variants of quotient maps were defined in topology, the profusion
being due to a search for most adequate classes of maps that preserve
various important types of topological spaces \cite{quest}. I will
list the traditional names of $\mathbb{H}$-quotients (see Proposition
\ref{prop:rephrase}) for specific classes $\mathbb{H}$ of filters
(in the brackets):
\begin{enumerate}
\item \emph{quotient} (principal filters of $f\xi$-closed sets), 
\item \emph{hereditarily quotient} (principal filters),
\item \emph{countably biquotient }(countably based filters), 
\item \emph{biquotient} (all filters).
\end{enumerate}
In topology, these notions were originally defined in terms of covers.
Moreover, they comprised continuity. Remember that we do not ask that
the considered maps be continuous! 

We shall list below (in topological context) cover definitions of
quotient-like maps. By Theorem \ref{thm:duality} and Proposition
\ref{prop:quot_cov}, a surjective map $f:\left|\xi\right|\rightarrow\left|\tau\right|$
is
\begin{enumerate}
\item \emph{quotient} if a set is $\tau$-open, provided that its preimage
under $f$ is $\xi$-open,
\item \emph{hereditarily quotient} if whenever an open set $P$ includes
$f^{-}(y)$, then $f(P)$ is a neighborhood of $y$ (\footnote{for each $y\in\left|\tau\right|$.}),
\item \emph{countably biquotient }if, for each countable open cover $\mathcal{P}$
of $f^{-}(y)$, there exists a finite subfamily $\mathcal{P}_{0}$
such that $\bigcup_{P\in\mathcal{P}_{0}}f(P)$ is a neighborhood of
$y$ ($\footnotemark[21]$),
\item \emph{biquotient }if for each open cover $\mathcal{P}$ of $f^{-}(y)$,
there exists a finite subfamily $\mathcal{P}_{0}$ such that $\bigcup_{P\in\mathcal{P}_{0}}f(P)$
is a neighborhood of $y$ ($\footnotemark[21]$).
\end{enumerate}
Recall that a generalization of $H$-quotient maps to arbitrary (concrete)
reflectors resulted in\emph{ almost open} maps (with $H=\mathrm{I}$,
the identity reflector). On the other hand, in topology,\emph{ open
maps} are almost open. 

\section{Mixed properties and functorial inequalities}

Numerous classical properties of topologies can be expressed in terms
of inequalities, involving two or more functors, typically, a coreflector
and a reflector. These properties make sense for an arbitrary convergence,
and will be introduced here in the convergence framework. Let us say
that a convergence $\xi$ is $JE$ if
\begin{equation}
\xi\geq JE\xi,\label{JE}
\end{equation}
where $J$ is a (concrete) reflector and $E$ is a (concrete) coreflector
(See subsections \ref{subsec:Functors} and \ref{subsec:Corefelective-subclasses}).
A composition of two functors is, of course, a functor. 

Let us give a couple of examples of $JE$-properties. In the examples
below, we shall employ a reflector $\mathrm{S}_{1}$ on the class
of convergences that are determined by adherences pf countably based
filters (see Appendix \ref{subsec:Reflective-subclasses}), and a
coreflector $\mathrm{I}_{1}$ on the subclass of convergences of\emph{
countable character} (see Appendix \ref{subsec:Corefelective-subclasses}).
\begin{example}
(\footnote{Mind that \emph{sequential topology} and \emph{sequentially based
convergence} are different notions!}) A topology is called \emph{sequential} if each sequentially closed
set is closed. If $\xi$ is a topology, then $\mathrm{Seq\,}\xi$
is the coarsest \emph{sequentially based convergence} (see Appendix
\ref{subsec:Corefelective-subclasses}) in general non-topological,
that is finer than $\xi$. 

Then $\mathrm{T\,Seq}\,\xi$ stands for the topology, for which the
open sets and the closed sets are determined by sequential filters,
that is, are sequentially open and closed, respectively. Therefore,
a topology $\xi$ is sequential if it coincides with $\mathrm{T\,Seq}\,\xi$,
which is equivalent to
\begin{equation}
\xi\geq\mathrm{T\,Seq}\,\xi.\label{eq:TSeq}
\end{equation}
Incidentally, $\mathrm{T}\,\mathrm{Seq}=\mathrm{T}\,\mathrm{I}_{1}$,
hence the inequality above amounts to $\xi\geq\mathrm{T\,I}_{1}\,\xi$.
\end{example}

\begin{example}
A convergence $\xi$ is called \emph{Fréchet} if $x\in\ad_{\xi}A$
implies the existence of a sequential filter $\mathcal{E}$ such that
$A\in\mathcal{E}$ and $x\in\lm_{\xi}\mathcal{E}$. It is straightforward
that $\xi$ is Fréchet whenever
\[
\xi\geq\mathrm{S_{0}\,Seq}\,\xi=\mathrm{S}_{0}\,\mathrm{I}_{1}\,\xi.
\]
\end{example}

\begin{example}
A convergence $\xi$ is called \emph{countably bisequential} (\footnote{Also called \emph{strongly Fréchet.}})
if $x\in\bigcap_{n}\ad_{\xi}A_{n}$ for a decreasing sequence $(A_{n})_{n}$
implies the existence of a sequence $(x_{n})_{n}$ such that $x_{n}\in A_{n}$
for each $n$, and $x\in\lm_{\xi}(x_{n})_{n}$. It is straightforward
that $\xi$ is countably bisequential if and only if $\mathcal{A}$
is a countably based filter, and $x\in\ad_{\xi}\mathcal{A}$, then
there is a countably based filter $\mathcal{E}$ such that $\mathcal{A}\#\mathcal{E}$
and $x\in\lm\mathcal{E}$, that is, whenever
\[
\xi\geq\mathrm{S_{1}\,Seq}\,\xi=\mathrm{S}_{1}\,\mathrm{I}_{1}\,\xi.
\]
\end{example}

\begin{example}
A convergence $\xi$ is called \emph{bisequential} if for every ultrafilter
$\mathcal{U}$ such that $x\in\lm_{\xi}\mathcal{U}$, there exists
a countably based filter $\mathcal{H\leq\mathcal{U}}$ such that $x\in\lm_{\xi}\mathcal{H}$.
In other words, $\xi$ is bisequential whenever $\lm_{\xi}\mathcal{U}\subset\lm_{\mathrm{I_{1}}\xi}\mathcal{U}$
for each ultrafilter $\mathcal{U}$, that is, whenever
\[
\xi\geq\mathrm{S}\,\mathrm{I}_{1}\,\xi.
\]
\end{example}

It has been long well-known that (topologically) continuous quotient
maps preserve (topologically) sequential topologies, but do not preserve
Fréchet topologies. On the other hand, continuous hereditarily quotient
maps preserve Fréchet topologies, but not countably bisequential topologies,
which are preserved by continuous countably biquotient maps, and so
on.

Theorem \ref{thm:preserv} below explains the mechanism of preservation
of mixed properties. Roughly speaking, a $JE$-property is preserved
by continuous $L$-quotient maps provided that $L$ is a concrete
reflector such that $J\leq L$.

It follows directly from the definition that
\begin{prop}
\label{prop:JE} A convergence $\xi$ is $JE$ if and only if the
identity from $E\xi$ to $\xi$ is $J$-quotient.
\end{prop}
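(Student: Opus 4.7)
The plan is to unwind the definitions on both sides and see that they literally coincide, once one observes what the final convergence of an identity map is. Since $E$ is a concrete coreflector, the underlying sets satisfy $|E\xi| = |\xi|$, so the identity map $i \colon |E\xi| \to |\xi|$ is a well-defined surjection, and Theorem \ref{thm:quotient} (extended, as noted in the paper, to arbitrary concrete reflectors via (\ref{eq:quotient})) applies.

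The first step is to compute the final convergence $i(E\xi)$. Because $i$ acts as the identity on the common underlying set, the finest convergence on $|\xi|$ that makes $i$ continuous from $E\xi$ is $E\xi$ itself; formally, by (\ref{eq:summarize}), $i(E\xi) \geq \zeta$ iff $i \in C(E\xi, \zeta)$ iff $E\xi \geq i^{-}\zeta = \zeta$, so $i(E\xi) = E\xi$.

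Next I would apply the extended characterization (\ref{eq:quotient}) from Theorem \ref{thm:quotient}: the map $i$ is $J$-quotient precisely when the target convergence $\xi$ satisfies $\xi \geq J(i(E\xi))$. Substituting $i(E\xi) = E\xi$ gives the condition $\xi \geq JE\xi$, which is exactly (\ref{JE}), the definition of $\xi$ being $JE$. This yields both implications simultaneously.

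There is essentially no obstacle to surmount here; the statement is a direct translation between a functorial inequality and the reflector-quotient condition. The only thing one must be careful about is that (\ref{eq:quotient}) was originally stated for adherence-determined reflectors, but the paragraph following Theorem \ref{thm:quotient} explicitly extends the definition to arbitrary concrete reflectors, so the argument applies uniformly to any such $J$.
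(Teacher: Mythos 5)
Your proof is correct and matches the paper's intent: the paper states this proposition as following "directly from the definition" and implicitly relies on exactly the identity $J(i(E\xi))=JE\xi$ you establish (it reappears in the proof of Theorem \ref{thm:preserv}). Your computation of $i(E\xi)=E\xi$ via (\ref{eq:summarize}) correctly supplies the one detail the paper leaves unstated.
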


On the other hand, the following observation generalizes several classical
theorems concerning particular preservation cases by \noun{E. Michael}
\cite{quest}, \noun{A. V. Arhangel'skii} \cite{arh.factor}, \noun{V.
I. Ponomarev} \cite{Pon}, \noun{S. Hanai} \cite{Han}, \noun{F. Siwiec}
\cite{Siwiec}, and others.
\begin{thm}
\label{thm:preserv}If $f\in C(\xi,\tau)$ is a $J$-quotient map,
and $\xi$ is a $JE$-convergence, then $\tau$ is also a $JE$-convergence.
Conversely, each $JE$-convergence is a continuous $J$-quotient image
of an $E$-convergence.
\end{thm}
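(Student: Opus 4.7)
My plan is to prove both directions by short order-theoretic chases, using only the defining properties (isotonicity, functoriality, idempotence, contractivity/expansivity) of the reflector $J$ and the coreflector $E$, together with the characterization of $J$-quotient maps from Theorem \ref{thm:quotient} (extended to arbitrary concrete reflectors).

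For the forward direction, I would start from the two hypotheses: $f$ being $J$-quotient gives $\tau \geq J(f\xi)$, and the $JE$-property of $\xi$ gives $\xi \geq JE\xi$. First, applying functoriality of $E$ and then of $J$ to $f\in C(\xi,\tau)$ yields $f \in C(JE\xi, JE\tau)$, which by \eqref{eq:summarize} translates to $f(JE\xi) \geq JE\tau$. Next, the final-convergence operator $\zeta \mapsto f\zeta$ is isotone (finer source yields finer image, by the universal property defining $f\zeta$), so the hypothesis $\xi \geq JE\xi$ gives $f\xi \geq f(JE\xi) \geq JE\tau$. Applying the isotone reflector $J$ and using its idempotence then yields $J(f\xi) \geq J(JE\tau) = JE\tau$. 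Combining this with the $J$-quotient inequality produces
\[
\tau \geq J(f\xi) \geq JE\tau,
\]
which is the $JE$-property for $\tau$.

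For the converse, given a $JE$-convergence $\tau$, the natural witness is $\xi := E\tau$ on $|\tau|$, with $f$ the identity map. Since $E$ is a coreflector, idempotence gives $E\xi = E(E\tau) = E\tau = \xi$, so $\xi$ is an $E$-convergence; since $E$ is expansive, $E\tau \geq \tau$, so the identity is continuous from $\xi$ to $\tau$; and since $f$ is the surjective identity, $f\xi = E\tau$, so the hypothesis $\tau \geq JE\tau$ is precisely the $J$-quotient condition $\tau \geq J(f\xi)$ of Theorem \ref{thm:quotient}.

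The argument presents no real obstacle; the one subtlety to flag is keeping the lattice direction straight (finer convergences are larger) and invoking the fact that $\zeta \mapsto f\zeta$ is isotone, which I would note explicitly because it is what lets the $JE$-inequality on $\xi$ be pushed through $f$ and then through $J$.
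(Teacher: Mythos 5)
Your proof is correct and follows essentially the same route as the paper's: both directions rest on the chain $\tau\geq J(f\xi)\geq J(f(JE\xi))\geq JE\tau$ obtained from isotonicity of $\zeta\mapsto f\zeta$, functoriality of $J$ and $E$, idempotence of $J$, and continuity, and the converse uses the identity $i:E\tau\rightarrow\tau$ exactly as in the paper (via Proposition \ref{prop:JE}). Your explicit checks that $E(E\tau)=E\tau$ and that expansivity of $E$ gives continuity of the identity are a welcome bit of added care, but the argument is the same.
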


\begin{proof}
By definition, $f$ is a $J$-quotient map whenever $\tau\geq J(f\xi)$,
and $\xi$ is a $JE$-map whenever $\xi\geq JE\xi$. Putting these
two inequalities together, yields
\[
\tau\geq J(f\xi)\geq J(f(JE\xi)),
\]
and, by (\ref{eq:functorial}), $J(f(JE\xi))\geq JE(f\xi)$, which
is greater than $JE\tau$, in view of continuity. By concatenating
these formulas, we get $\tau\geq JE\tau$.

Conversely, $\tau\geq JE\tau=J(i(E\tau))$, where $i\in C(\xi,\tau)$
is the identity (see Proposition \ref{prop:JE}).
\end{proof}
The mentioned classical theorems, however, assert that each $JE$
topology is a $J$-quotient image of a topology (sometimes even metrizable
or paracompact) $\theta$ such that $\theta\geq E\theta$. In our
proof, $E\tau$ is in general not a topology. This topologicity requirement
might provide technical difficulties (see \cite[Section 5]{quest2},
for example).

Below, we shall give an example of the discussed preservation scheme
only in case of $E=\mathrm{I}_{1}$ (the coreflector of \emph{countable
character}). The interested readers will find similar, but more exhaustive,
tables in \cite[p. 11]{quest2} and in \cite[p. 400]{CFT}. The first
column indicates the type of quotient, the second the corresponding
reflector. A quotient type in a given line preserves the properties
of the same line and of the lines that are below. The strength of
the listed properties decreases downwards.\medskip{}

\begin{center}
\begin{tabular}{|c|c|c|c|}
\hline 
quotient type & reflector $J$ & mixed property & type $J\mathrm{I}_{1}$\tabularnewline
\hline 
\hline 
almost open & $\mathrm{I}$ & countable character & $\mathrm{I}_{1}$\tabularnewline
\hline 
biquotient & $\mathrm{S}$ & bisequential & $\mathrm{SI}_{1}$\tabularnewline
\hline 
countably biquotient & $\mathrm{S}_{1}$ & countably bisequential & $\mathrm{S}_{1}\mathrm{I}_{1}$\tabularnewline
\hline 
hereditarily quotient & $\mathrm{S}_{0}$ & Fréchet & $\mathrm{S}_{0}\mathrm{I}_{1}$\tabularnewline
\hline 
quotient & $\mathrm{T}$ & sequential & $\mathrm{T}\mathrm{I}_{1}$\tabularnewline
\hline 
\end{tabular}
\par\end{center}

\section{Various classes of perfect-like maps}

As for quotient-like maps, traditional definitions of perfect-like
maps incorporate continuity. In most situations, however, continuity
is superfluous, and no continuity is assumed here. Recall the definition
of $\mathbb{G}$-perfect maps: a map $f$ from $\left|\xi\right|$
to $\left|\tau\right|$ whenever (\ref{eq:per})
\[
\ad_{\tau}f[\mathcal{G}]\subset f(\ad_{\xi}\mathcal{G})
\]
holds for each filter $\mathcal{G}\in\mathbb{G}$. This can be also
reformulated as
\begin{equation}
y\in\ad_{\tau}f[\mathcal{G}]\Longrightarrow f^{-}(y)\cap\ad_{\xi}\mathcal{G}\neq\varnothing\label{eq:perfect}
\end{equation}
for each filter $\mathcal{G}\in\mathbb{G}$.

I list below the names of surjective maps $f$ corresponding to (\ref{eq:perfect})
for specific classes $\mathbb{G}$ of filters (written in brackets):
\begin{enumerate}
\item \emph{closed} (principal filters of closed sets),
\item \emph{adherent} (principal filters),
\item \emph{countably perfect} (countably based filters),
\item \emph{perfect} (all filters).
\end{enumerate}
\emph{Perfect} maps are \emph{countably perfect}, which are \emph{adherent},
which, in turn, are \emph{closed} (\footnote{To see that \emph{adherent} implies \emph{closed}, take a $\xi$-closed
set $G$, that is, $\ad_{\xi}G=G$ and set $\mathcal{G}=\{G\}^{\uparrow}$
in the definition of adherent map above, with $\mathbb{G}$ the class
of principal filters, and get $\ad_{\tau}f[G]\subset f(G)$, that
is, $f(G)$ is $\tau$-closed.}). \emph{Adherent maps} were introduced in \cite{quest2} in the context
of general convergences; they were not detected before, because they
coincide with \emph{closed maps} in the framework of topologies. Here
we show that they do not coincide in general.
\begin{example}
Let $\xi=\mathrm{S}_{0}\xi>\mathrm{T}\xi=\tau$. Then the identity
$i\in C(\xi,\tau)$ is a quotient map, that is, $\tau\geq\mathrm{T}\xi$,
hence a closed map by Remark \ref{rem:Iper-quot}. As $\mathrm{S}_{0}\xi>\tau$,
it is not a hereditarily quotient map, hence not an adherent map by
Remark \ref{rem:Iper-quot}.
\end{example}

A surjective map is \emph{closed} if it maps closed sets onto closed
sets. Indeed, if $\mathcal{G}=\{\cl_{\xi}G\}^{\uparrow}$ then $\ad_{\xi}G\subset G$,
hence (\ref{eq:per}) becomes $\ad_{\tau}f(G)\subset f(G)$, that
is, whenever$f(G)$ is $\tau$-closed.

The following two propositions characterize perfect and countably
perfect maps in topological spaces in traditional terms. They are
particular cases of a more abstract situation, discussed in \cite[Proposition XV.3.6]{CFT}.

A map between topological spaces is called \emph{perfect} \cite[p. 236]{Eng}
if it is continuous, closed and all its fibers are compact (in a Hausdorff
domain of the map \footnote{Hausdorffness is assumed, because compactness in \cite{Eng} is defined
only for Hausdorff topologies.}). In this paper we consider only surjective maps. On the other hand,
continuity is irrelevant in these propositions.
\begin{prop}
\label{prop:perf}A surjective map between topological spaces is \emph{perfect}
if and only if it is closed and all its fibers are compact.
\end{prop}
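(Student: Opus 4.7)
The plan is to work directly from the defining inclusion (\ref{eq:per}) taken with $\mathbb{G}=\mathbb{F}$ (all filters), and to exploit the topological identity $\ad_{\xi}\mathcal{G}=\bigcap_{G\in\mathcal{G}}\cl_{\xi}G$, valid because $\xi$ is a topology. With this identity in hand, both directions reduce to elementary manipulations of closures and filter bases.

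For the forward direction, closedness is the already-noted implication \emph{perfect} $\Rightarrow$ \emph{adherent} $\Rightarrow$ \emph{closed} (restrict (\ref{eq:per}) to principal filters of closed sets). To obtain compactness of $f^{-}(y)$, I would let $\mathcal{F}$ be any filter on $X$ with $f^{-}(y)\in\mathcal{F}$. Then for every $F\in\mathcal{F}$ the set $F\cap f^{-}(y)$ is non-empty, so $y\in f(F)\subset\cl_{\tau}f(F)$, and consequently $y\in\ad_{\tau}f[\mathcal{F}]$. Applying (\ref{eq:per}) furnishes $x\in f^{-}(y)\cap\ad_{\xi}\mathcal{F}$, i.e., $\mathcal{F}$ has a cluster point inside $f^{-}(y)$. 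Since this holds for every filter that contains $f^{-}(y)$, the fiber is compact.

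For the converse, let $f$ be closed with compact fibers, fix a filter $\mathcal{G}$ and a point $y\in\ad_{\tau}f[\mathcal{G}]$, that is, $y\in\cl_{\tau}f(G)$ for every $G\in\mathcal{G}$. I would upgrade this to $y\in f(\cl_{\xi}G)$: since $f(\cl_{\xi}G)$ is $\tau$-closed (by closedness of $f$) and contains $f(G)$, it contains $\cl_{\tau}f(G)\ni y$. Hence the family
\[
\mathcal{K}:=\{\cl_{\xi}G\cap f^{-}(y):G\in\mathcal{G}\}
\]
consists of non-empty subsets of $f^{-}(y)$, and finite intersections of its members remain of this form because $\mathcal{G}$ is a filter and $\cl_{\xi}$ is isotone. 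Compactness of $f^{-}(y)$ then produces a common point $x\in f^{-}(y)\cap\bigcap_{G\in\mathcal{G}}\cl_{\xi}G=f^{-}(y)\cap\ad_{\xi}\mathcal{G}$, which is the reformulation (\ref{eq:perf_form}) of perfection, obtained for an arbitrary $\mathcal{G}$.

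The only delicate point, I expect, is bookkeeping: one must use compactness of the fiber in its \emph{finite intersection} form rather than in its open-cover form, because the hypotheses produce the closed sets $\cl_{\xi}G$, not open covers. Both directions rest essentially on $\xi$ and $\tau$ being topologies, through the identity $\ad=\bigcap\cl$, which is what makes ``closed with compact fibers'' a faithful topological translation of the abstract convergence-theoretic perfection condition.
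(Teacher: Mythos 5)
Your proof is correct and follows essentially the same route as the paper: the converse is the paper's argument almost verbatim (closedness of $f$ yields $\cl_{\xi}G\cap f^{-}(y)\neq\varnothing$ for each $G\in\mathcal{G}$, and compactness of the fiber in finite-intersection form yields $f^{-}(y)\cap\bigcap_{G\in\mathcal{G}}\cl_{\xi}G\neq\varnothing$, i.e., (\ref{eq:perf_form})). The only difference is in the forward direction, where you verify compactness of $f^{-}(y)$ by a direct filter computation rather than by invoking that $f^{-}$ is a compact relation (Theorem \ref{thm:perfect=00003Dcomp}) together with Proposition \ref{prop:image_comp}; this is a harmless, slightly more self-contained unpacking of the same idea.
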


\begin{prop}
\label{prop:count_perf}A surjective map between topological spaces
is \emph{countably perfect} if and only if it is closed and all its
fibers are countably compact.
\end{prop}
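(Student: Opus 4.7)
The plan is to unfold the defining inequality (\ref{eq:perfect}) in two directions, connecting it on one hand with closedness of $f$, and on the other with the countable compactness of each fiber. In both implications, the countable-base assumption on $\mathcal{G}$ is what brings in countable (as opposed to full) compactness of $f^-(y)$; the topologicity of $\xi$ and $\tau$ lets me pass freely between $\ad$ and $\cl$.

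For the forward direction, closedness of $f$ follows exactly as in the argument indicated for ``adherent $\Rightarrow$ closed'' in the text preceding Proposition~\ref{prop:perf}: specializing (\ref{eq:perfect}) to $\mathcal{G}=\{G\}^{\uparrow}$ for a $\xi$-closed $G$ (a principal, hence countably based, filter) yields $\ad_{\tau}f(G)\subset f(G)$. For countable compactness of the fiber $f^{-}(y_{0})$, I take any countably based filter $\mathcal{G}$ with $f^{-}(y_{0})\in\mathcal{G}$; then $f[\mathcal{G}]=\{y_{0}\}^{\uparrow}$, so $y_{0}\in\ad_{\tau}f[\mathcal{G}]$, and (\ref{eq:perfect}) delivers an adherent point of $\mathcal{G}$ inside $f^{-}(y_{0})$. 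This is precisely countable compactness of $f^{-}(y_{0})$.

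For the converse, I assume $f$ is closed and every fiber is countably compact, and fix a countably based filter $\mathcal{G}$ together with $y\in\ad_{\tau}f[\mathcal{G}]$. Let $(G_{n})_{n}$ be a decreasing countable base of $\mathcal{G}$. Since $f$ is closed, $f(\cl_{\xi}G_{n})$ is $\tau$-closed and contains $f(G_{n})$, hence contains $\cl_{\tau}f(G_{n})\ni y$, so $f^{-}(y)\cap\cl_{\xi}G_{n}\neq\varnothing$ for every $n$. The sequence $\bigl(f^{-}(y)\cap\cl_{\xi}G_{n}\bigr)_{n}$ is a decreasing chain of non-empty closed subsets of the countably compact subspace $f^{-}(y)$, so its intersection is non-empty. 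Any point $x$ in this intersection lies in $\bigcap_{n}\cl_{\xi}G_{n}=\ad_{\xi}\mathcal{G}$ (using that $\xi$ is a topology and $(G_{n})_{n}$ is a decreasing base), and $x\in f^{-}(y)$, giving $f^{-}(y)\cap\ad_{\xi}\mathcal{G}\neq\varnothing$, i.e.\ (\ref{eq:perfect}).

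The only delicate point, and the one I would double-check, is the equivalence between countable compactness of the subspace $f^{-}(y)$ and the filter-theoretic reformulation used in both directions (every countably based filter on $f^{-}(y)$ has an adherent point in $f^{-}(y)$, equivalently every decreasing sequence of non-empty relatively closed subsets of $f^{-}(y)$ has non-empty intersection). This is standard in topology without any separation axiom, but it is the hinge of the argument and corresponds, at the level of filters, to $\mathbb{F}_{1}$-compactness of the fibers, paralleling the $\mathbb{F}$-compactness that drives Proposition~\ref{prop:perf}.
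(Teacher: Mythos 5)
Your proof is correct and follows essentially the same route as the paper, which proves Proposition \ref{prop:perf} by exactly this scheme (closedness from principal filters of closed sets, fiber compactness from filters living on the fiber, and conversely $\cl_{\xi}G\cap f^{-}(y)\neq\varnothing$ from closedness followed by compactness of the fiber) and declares the countably based case \emph{mutatis mutandis}. You have simply carried out the \emph{mutatis mutandis} explicitly via a decreasing countable base, and the ``delicate point'' you flag --- the filter reformulation of countable compactness of $f^{-}(y)$ --- is precisely the $\mathbb{F}_{1}$-compactness the paper works with, so nothing is missing.
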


We shall provide proofs to these propositions in Section \ref{sec:Compactness-characterizations}.

\begin{onehalfspace}
The following table details Proposition \ref{prop:transferable}.
The strength of the listed properties decreases downwards.
\end{onehalfspace}
\begin{onehalfspace}
\begin{center}
\begin{tabular}{|c|c|c|}
\hline 
perfect-like & $\Rightarrow$ & quotient-like\tabularnewline
\hline 
\hline 
 &  & open\tabularnewline
\hline 
 &  & almost open\tabularnewline
\hline 
perfect & $\Rightarrow$ & biquotient\tabularnewline
\hline 
countably perfect & $\Rightarrow$ & countably biquotient\tabularnewline
\hline 
adherent & $\Rightarrow$ & hereditarily quotient\tabularnewline
\hline 
closed & $\Rightarrow$ & topologically quotient\tabularnewline
\hline 
\end{tabular}\medskip{}
\par\end{center}
\end{onehalfspace}

\begin{onehalfspace}
No arrow can be reversed. Indeed,
\end{onehalfspace}
\begin{example}
\label{exa:classic}Let $f:\mathbb{R}\rightarrow S^{1}$ be given
by $f(x):=(\cos2\pi x,\sin2\pi x)$ (\footnote{We consider the usual topologies on the real line $\mathbb{R}$ and
on the unit circle $S^{1}$. }). It follows immediately from Proposition \ref{prop:open} that $f$
is open, hence, has all the properties from the right-hand column.
Notice that the set $\{n+\frac{1}{n}:n\in\mathbb{N}\}$ is closed,
but its image under $f$ is not closed, so that $f$ is not closed,
and thus has no property from the left-hand column.
\end{example}

\section{Compactness characterizations\label{sec:Compactness-characterizations}}

In Appendix \ref{subsec:Compactness}, a relation $R\subset W\times Z$
(with convergences $\theta$ on $W$ and $\sigma$ on $Z$) is said
to be $\mathbb{J}$\emph{-compact }at $w$\emph{ }if for every $\mathcal{F}$
such that $w\in\lm_{\theta}\mathcal{F}$ implies that $R(w)\#\ad_{\sigma}\mathcal{J}$
for each $\mathcal{J}\#R[\mathcal{F}]$ such that $\mathcal{J}\in\mathbb{J}$.
Now I will characterize the classes of maps discussed in terms of
such relations. These characterizations enable us to see new interrelations
(see Propositions \ref{prop:perf} and \ref{prop:count_perf} and
Section \ref{sec:A-couple-of}).

\subsection{Continuous maps}
\begin{thm}
If $J$ is a $\mathbb{J}$-adherence-determined reflector, then $f\in C(J\xi,J\tau)$
if and only if $f$ is $\mathbb{J}$-compact from $\xi$ to $\tau$.
\end{thm}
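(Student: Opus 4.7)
The plan is to unpack both sides of the asserted equivalence using the formula (\ref{eq:adh-determined}) defining a $\mathbb{J}$-adherence-determined reflector, and to observe that the compactness condition for the relation $f$ is the pointwise translation of the continuity condition from $\xi$ to $J\tau$.

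First, I would spell out what $\mathbb{J}$-compactness of the map $f$ (viewed as the relation $\{(x,f(x)) : x \in |\xi|\}$) from $\xi$ to $\tau$ says. Since the fiber $f(x)$ is a singleton, the condition $f(x)\#\ad_\tau\mathcal{J}$ reduces to $f(x)\in\ad_\tau\mathcal{J}$, and $\mathcal{J}\#f[\mathcal{F}]$ keeps its meaning. Hence $f$ is $\mathbb{J}$-compact from $\xi$ to $\tau$ exactly when, for every filter $\mathcal{F}$, every $x\in\lm_\xi\mathcal{F}$ and every $\mathcal{J}\in\mathbb{J}$ with $\mathcal{J}\#f[\mathcal{F}]$,
\[
f(x)\in\ad_\tau\mathcal{J}.
\]

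Second, I would apply the formula (\ref{eq:adh-determined}) with $\theta:=\tau$ and the filter $f[\mathcal{F}]$ in place of $\mathcal{F}$: quantifying over all $\mathcal{J}\in\mathbb{J}$ that mesh $f[\mathcal{F}]$ gives
\[
\bigcap_{\mathbb{J}\ni\mathcal{J}\#f[\mathcal{F}]}\ad_\tau\mathcal{J}=\lm_{J\tau}f[\mathcal{F}].
\]
Consequently, $\mathbb{J}$-compactness of $f$ from $\xi$ to $\tau$ is equivalent to the statement that $f(x)\in\lm_{J\tau}f[\mathcal{F}]$ whenever $x\in\lm_\xi\mathcal{F}$, that is, to $f\in C(\xi,J\tau)$.

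Third, to bridge $C(\xi,J\tau)$ and $C(J\xi,J\tau)$, I would invoke the functoriality (\ref{eq:functorial}) of $J$ together with its idempotence. Functoriality yields $C(\xi,J\tau)\subset C(J\xi,J(J\tau))=C(J\xi,J\tau)$, while contractivity $J\xi\leq\xi$ gives the opposite inclusion $C(J\xi,J\tau)\subset C(\xi,J\tau)$. Therefore $C(\xi,J\tau)=C(J\xi,J\tau)$, which combined with the previous step completes the equivalence.

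I do not expect any genuine obstacle; the argument is essentially a reading of definitions, with the only mildly delicate point being the interchange of the universal quantifier over $\mathcal{J}\in\mathbb{J}$ with the intersection in the defining formula (\ref{eq:adh-determined}), and the observation that $C(\xi,J\tau)=C(J\xi,J\tau)$, which relies on idempotence of the reflector $J$.
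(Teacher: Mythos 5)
Your proposal is correct and follows essentially the same route as the paper: unpack $\mathbb{J}$-compactness of the map $f$ pointwise, identify the resulting condition with $f(x)\in\lm_{J\tau}f[\mathcal{F}]$ via the adherence-determined formula, and reduce $C(J\xi,J\tau)$ to $C(\xi,J\tau)$ using functoriality, idempotence, and contractivity of the reflector $J$ (the paper relegates this last identity to a footnote, with the same argument you give). No gaps.
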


\begin{proof}
By definition,
\begin{equation}
\lm_{J\tau}f[\mathcal{F}]=\bigcap_{\mathbb{J}\ni\mathcal{H}\#f[\mathcal{F}]}\ad_{\tau}\mathfrak{\mathcal{H}}.\label{eq:adhoc}
\end{equation}
Let $x\in\lm_{\xi}\mathcal{F}$. If $f\in C(J\xi,J\tau)=C(\xi,J\tau)$
(\footnote{$C(\xi,J\tau)\subset C(J\xi,J\tau)$, because $\xi\geq J\xi$. On
the other hand, by applying $J$ to $f\xi\geq J\tau$, we get $J(f\xi)\geq JJ\tau=J\tau.$
As $J$ is a functor, $f(J\xi)\geq J(f\xi)$ by (\ref{eq:funct}).}), then $f(x)$ belongs to (\ref{eq:adhoc}), equivalently, $\{f(x)\}\#\ad_{\tau}\mathfrak{\mathcal{H}}$
for each $\mathcal{H}\in\mathbb{J}$ such that $\mathcal{H}\#f[\mathcal{F}]$,
that is, $f$ is $\mathbb{J}$-compact from $\xi$ to $\tau$. 
\end{proof}

\subsection{Perfect-like maps}

Perfect-like maps $f$ were characterized in \cite{D.comp,D.comp_err}
in terms of various types of compactness of $f^{-}$.
\begin{thm}
\label{thm:perfect=00003Dcomp}A surjective map $f$ is $\mathbb{J}$-perfect
if and only if the relation $f^{-}$ is $\mathbb{J}$-compact.
\end{thm}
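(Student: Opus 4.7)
The plan is to unwind both sides against their definitions and use the grill duality (\ref{eq:grill_dual}) to swap between images under $f$ and preimages under $f^{-}$. By Proposition \ref{prop:rephrase}, $\mathbb{J}$-perfectness is the implication
\[
y\in\ad_{\tau}f[\mathcal{G}]\Longrightarrow f^{-}(y)\cap\ad_{\xi}\mathcal{G}\neq\varnothing,\qquad\mathcal{G}\in\mathbb{J}.
\]
On the other hand, specializing the definition of $\mathbb{J}$-compactness (Appendix \ref{subsec:Compactness}) to the relation $f^{-}\subset\left|\tau\right|\times\left|\xi\right|$ reads: for every $y\in\left|\tau\right|$, every filter $\mathcal{F}$ on $\left|\tau\right|$ with $y\in\lm_{\tau}\mathcal{F}$, and every $\mathcal{J}\in\mathbb{J}$ with $\mathcal{J}\#f^{-}[\mathcal{F}]$, one has $f^{-}(y)\cap\ad_{\xi}\mathcal{J}\neq\varnothing$. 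So both conditions have the same conclusion $f^{-}(y)\cap\ad_{\xi}(\cdot)\neq\varnothing$; only the hypotheses differ.

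For the direction \emph{perfect} $\Rightarrow$ \emph{compact}, I would start from $y\in\lm_{\tau}\mathcal{F}$ and $\mathcal{J}\#f^{-}[\mathcal{F}]$ with $\mathcal{J}\in\mathbb{J}$. Applying (\ref{eq:grill_dual}) converts $\mathcal{J}\#f^{-}[\mathcal{F}]$ into $f[\mathcal{J}]\#\mathcal{F}$; combined with $y\in\lm_{\tau}\mathcal{F}$ this witnesses, via the definition (\ref{eq:adh}), that $y\in\ad_{\tau}f[\mathcal{J}]$. Since $\mathcal{J}\in\mathbb{J}$, perfectness yields $f^{-}(y)\cap\ad_{\xi}\mathcal{J}\neq\varnothing$, which is precisely the compactness conclusion.

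For the converse, suppose $f^{-}$ is $\mathbb{J}$-compact and $y\in\ad_{\tau}f[\mathcal{G}]$ for some $\mathcal{G}\in\mathbb{J}$. By the definition (\ref{eq:adh}) of adherence, there is a filter $\mathcal{F}$ with $y\in\lm_{\tau}\mathcal{F}$ and $\mathcal{F}\#f[\mathcal{G}]$; another application of (\ref{eq:grill_dual}) gives $\mathcal{G}\#f^{-}[\mathcal{F}]$. Applying the $\mathbb{J}$-compactness hypothesis at $y$ with the filter $\mathcal{F}$ and the member $\mathcal{G}\in\mathbb{J}$ delivers $f^{-}(y)\cap\ad_{\xi}\mathcal{G}\neq\varnothing$, which is the $\mathbb{J}$-perfect condition of Proposition \ref{prop:rephrase}.

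There is no genuine obstacle here; the proof is really bookkeeping on the meshing relation. The only point requiring a moment's care is matching the roles of source/target in the relation $f^{-}$ (so that ``$R[\mathcal{F}]$'' in the definition of compactness is $f^{-}[\mathcal{F}]$, a filter-base on $|\xi|$), after which (\ref{eq:grill_dual}) does all the work symmetrically in both directions; surjectivity of $f$ is never needed in this argument.
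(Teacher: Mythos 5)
Your proof is correct and is essentially the paper's own argument: both directions rewrite $\mathbb{J}$-perfectness via (\ref{eq:perf_form}) and translate between $\mathcal{J}\#f^{-}[\mathcal{F}]$ and $f[\mathcal{J}]\#\mathcal{F}$ using the grill duality, exactly as in the published proof. Your version is if anything slightly more careful in the converse direction, where you make explicit that the given $\mathcal{G}$ does mesh $f^{-}[\mathcal{F}]$ before invoking compactness.
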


\begin{proof}
Recall (\ref{eq:perf_form}) that a map $f:\left|\xi\right|\rightarrow\left|\tau\right|$
is $\mathbb{J}$-perfect whenever, for each $\mathcal{J}\in\mathbb{J}$
and each $y\in\left|\tau\right|$,
\[
y\in\ad_{\tau}f[\mathcal{J}]\Longrightarrow f^{-}(y)\#\ad_{\xi}\mathcal{J}.
\]
Assume that $f$ is $\mathbb{J}$-perfect and let $y\in\lm_{\tau}\mathcal{F}$
and consider $\mathcal{J}\#f^{-}[\mathcal{F}]$ such that $\mathcal{J}\in\mathbb{J}$.
Accordingly, $f[\mathcal{J}]\#\mathcal{F}$ hence $y\in\ad_{\tau}f[\mathcal{J}]$,
and thus, by assumption, $f^{-}(y)\#\ad_{\xi}\mathcal{J}$, which
proves that $f^{-}$ is $\mathbb{J}$-compact. 

Conversely, let $f^{-}$ be $\mathbb{J}$-compact, and $y\in\ad_{\tau}f[\mathcal{J}]$
with $\mathcal{J}\in\mathbb{J}$. Consequently, there exists $\mathcal{F}\#f[\mathcal{J}]$
and such that $y\in\lm_{\tau}\mathcal{F}$. By the compactness assumption,
$f^{-}(y)\#\ad_{\xi}\mathcal{J}$ for each $\mathcal{J}\in\mathbb{J}$
such that $\mathcal{J}\#f^{-}[\mathcal{F}]$.
\end{proof}
By Proposition \ref{prop:image_comp}, 
\begin{prop}
Let $\mathbb{J}$ be transferable, and $f$ be $\mathbb{J}$-perfect.
Then $f^{-}(K)$ is $\mathbb{J}$-compact, provided that $K$ is $\mathbb{J}$-compact.
\end{prop}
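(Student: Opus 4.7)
The plan is to combine two ingredients already at hand: the characterization of $\mathbb{J}$-perfect maps as those whose inverse relation is $\mathbb{J}$-compact (Theorem~\ref{thm:perfect=00003Dcomp}), and the general appendix fact (Proposition~\ref{prop:image_comp}) that, for transferable $\mathbb{J}$, a $\mathbb{J}$-compact relation carries $\mathbb{J}$-compact sets to $\mathbb{J}$-compact sets. First, I would apply Theorem~\ref{thm:perfect=00003Dcomp} to the hypothesis that $f$ is $\mathbb{J}$-perfect: this gives that the inverse relation $f^{-}\subset|\tau|\times|\xi|$ is $\mathbb{J}$-compact. Then I would invoke Proposition~\ref{prop:image_comp} with $R:=f^{-}$ and with the given $\mathbb{J}$-compact set $K\subset|\tau|$ to conclude that the image $f^{-}(K)\subset|\xi|$ is itself $\mathbb{J}$-compact, which is the claim.

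For readers who wish to see the argument unwound without black-boxing the appendix, the direct verification is short. Given $\mathcal{J}\in\mathbb{J}$ with $\mathcal{J}\# f^{-}(K)$, each $J\in\mathcal{J}$ meets $f^{-}(K)$, hence $f[\mathcal{J}]\# K$; transferability of $\mathbb{J}$ ensures that $f[\mathcal{J}]\in\mathbb{J}$. The $\mathbb{J}$-compactness of $K$ then produces a point $y\in K\cap\ad_{\tau}f[\mathcal{J}]$, and the $\mathbb{J}$-perfect property in the form (\ref{eq:perf_form}) yields $x\in f^{-}(y)\cap\ad_{\xi}\mathcal{J}$. Since $f(x)=y\in K$, the point $x$ lies in $f^{-}(K)\cap\ad_{\xi}\mathcal{J}$, so this intersection is non-empty, as required for $\mathbb{J}$-compactness of $f^{-}(K)$.

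No serious obstacle is anticipated. The only point to be careful about is matching the ``set'' version of $\mathbb{J}$-compactness used in the statement with the relational version underlying Theorem~\ref{thm:perfect=00003Dcomp}; transferability is exactly the bookkeeping hypothesis that lets filters cross $f$ and $f^{-}$ while remaining in $\mathbb{J}$, so both the direct proof and the appeal to Proposition~\ref{prop:image_comp} go through without incident.
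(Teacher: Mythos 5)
Your argument is correct and is precisely the one the paper intends: the printed proof in the source is truncated to a stray sentence, but the lead-in ``By Proposition \ref{prop:image_comp}'' shows that the official route is exactly your first paragraph, namely Theorem \ref{thm:perfect=00003Dcomp} to identify $f^{-}$ as a $\mathbb{J}$-compact relation, followed by Proposition \ref{prop:image_comp} applied to the (principal filter of the) $\mathbb{J}$-compact set $K$. Your second, unwound verification is a faithful expansion of that same argument and usefully supplies the details the paper's garbled proof omits.
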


\begin{proof}
The notion of graph-closedness (Definition \ref{def:graph-closed})
will be used in the following 
\end{proof}
\begin{cor}
If $J$ is $\mathbb{J}$-adherence-determined, then a surjective map
$f$ (from $\left|\xi\right|$ to $\left|\tau\right|$) is $\mathbb{J}$\emph{-perfect}
provided that $f$ is graph-closed and
\begin{equation}
f^{-}\tau\geq J(\chi_{\xi}),\label{eq:perf_abstract}
\end{equation}
where $\chi_{\xi}$ is the characteristic convergence of $\xi$ (\ref{eq:char_def}).
\end{cor}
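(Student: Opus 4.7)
The approach is to invoke Theorem~\ref{thm:perfect=00003Dcomp} to replace $\mathbb{J}$-perfectness with $\mathbb{J}$-compactness of the inverse relation $f^{-}$, unwind the hypothesis $f^{-}\tau\geq J(\chi_{\xi})$ through the adherence-determined formula~(\ref{eq:adh-determined}), and close the argument with graph-closedness. Thus I would first fix $y\in\lm_{\tau}\mathcal{F}$ together with $\mathcal{J}\in\mathbb{J}$ satisfying $\mathcal{J}\#f^{-}[\mathcal{F}]$, and aim to exhibit some $x\in f^{-}(y)\cap\ad_{\xi}\mathcal{J}$.

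The second step is to push the hypotheses into $\chi_{\xi}$-adherences. By surjectivity and~(\ref{eq:f-fsurH}), $f[f^{-}[\mathcal{F}]]=\mathcal{F}$, so the universal property of the initial convergence places every $x\in f^{-}(y)$ in $\lm_{f^{-}\tau}f^{-}[\mathcal{F}]$; the hypothesis $f^{-}\tau\geq J(\chi_{\xi})$ then upgrades this to $x\in\lm_{J(\chi_{\xi})}f^{-}[\mathcal{F}]$. Since $J$ is $\mathbb{J}$-adherence-determined, specialising~(\ref{eq:adh-determined}) to the meshing filter $\mathcal{J}\in\mathbb{J}$ yields $x\in\ad_{\chi_{\xi}}\mathcal{J}$ for every $x\in f^{-}(y)$; in particular, there is a filter $\mathcal{H}\supset\mathcal{J}$ (and, after refinement, we may further require that $\mathcal{H}$ remain compatible with $f^{-}[\mathcal{F}]$) with $x\in\lm_{\chi_{\xi}}\mathcal{H}$.

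The final and main step is the conversion of this $\chi_{\xi}$-adherence on the fiber $f^{-}(y)$ into an honest $\xi$-adherence; this is precisely where graph-closedness is brought to bear. The guiding intuition is that the characteristic convergence $\chi_{\xi}$ is a weakened form of $\xi$ so constructed that, once $f[\mathcal{H}]$ has been pinned down by $\tau$-convergence to $y$, a closed graph of $f$ forces any $\chi_{\xi}$-limit of $\mathcal{H}$ in the fiber $f^{-}(y)$ to be an actual $\xi$-limit there. Applying this translation at $x$ produces $x\in\lm_{\xi}\mathcal{H}\subset\ad_{\xi}\mathcal{J}$, which is the required point. The principal obstacle lies entirely in this last passage: one must invoke the precise definitions of $\chi_{\xi}$ and graph-closedness from the appendix to justify the inclusion $\ad_{\chi_{\xi}}\mathcal{J}\cap f^{-}(y)\subset\ad_{\xi}\mathcal{J}$ under the standing hypotheses, while the rest of the argument is the essentially formal bookkeeping described above.
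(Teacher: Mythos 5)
Your overall architecture is the intended one: reduce $\mathbb{J}$-perfectness to $\mathbb{J}$-compactness of the relation $f^{-}$ via Theorem \ref{thm:perfect=00003Dcomp}, translate the hypothesis $f^{-}\tau\geq J(\chi_{\xi})$ through (\ref{eq:adh-determined}), and let graph-closedness localize the conclusion to the fiber. But the step you yourself flag as ``the principal obstacle'' is not carried out, and the mechanism you propose for it cannot work. By (\ref{eq:char_def}), $\lm_{\chi_{\xi}}\mathcal{H}$ is either all of $\left|\xi\right|$ or empty; hence $\ad_{\chi_{\xi}}\mathcal{J}$ is all of $\left|\xi\right|$ precisely when $\ad_{\xi}\mathcal{J}\neq\varnothing$, and the statement ``$x\in\ad_{\chi_{\xi}}\mathcal{J}$ for every $x\in f^{-}(y)$'' carries no information about the location of $x$: it only records that $\ad_{\xi}\mathcal{J}\neq\varnothing$, i.e.\ (Proposition \ref{prop:Comp_char}) that $f^{-}[\mathcal{F}]$ is $\mathbb{J}$-compactoid. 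Consequently your claim that graph-closedness ``forces any $\chi_{\xi}$-limit of $\mathcal{H}$ in the fiber to be an actual $\xi$-limit there'' is false (every point of the fiber is a $\chi_{\xi}$-limit of any $\xi$-convergent filter), and the inclusion $\ad_{\chi_{\xi}}\mathcal{J}\cap f^{-}(y)\subset\ad_{\xi}\mathcal{J}$ you aim for would assert that the \emph{whole} fiber lies in $\ad_{\xi}\mathcal{J}$ --- both false in general and far more than the required $f^{-}(y)\cap\ad_{\xi}\mathcal{J}\neq\varnothing$.

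What is actually needed is a $\xi$-convergent filter $\mathcal{H}$ meshing \emph{both} $\mathcal{J}$ and $f^{-}[\mathcal{F}]$: then $\varnothing\neq\lm_{\xi}\mathcal{H}\subset\ad_{\xi}\mathcal{J}$, while graph-closedness of $f^{-}$ at $y$ gives $\lm_{\xi}\mathcal{H}\subset\ad_{\xi}f^{-}[\mathcal{F}]\subset f^{-}(y)$, which finishes the proof. Your parenthetical ``after refinement, we may further require that $\mathcal{H}$ remain compatible with $f^{-}[\mathcal{F}]$'' is exactly this point, and it is not free: the compactoidness delivered by (\ref{eq:perf_abstract}) yields non-empty $\xi$-adherence only for filters \emph{of the class} $\mathbb{J}$ that mesh $f^{-}[\mathcal{F}]$, whereas the natural candidate $\mathcal{J}\vee f^{-}[\mathcal{F}]$ need not belong to $\mathbb{J}$ (e.g.\ for $\mathbb{J}=\mathbb{F}_{0}$ or $\mathbb{F}_{1}$). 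This is precisely the content of the footnoted assertion that a graph-closed $\mathbb{J}$-compactoid relation is $\mathbb{J}$-compact, which, combined with Proposition \ref{prop:Comp_char} (identifying (\ref{eq:perf_abstract}) with $\mathbb{J}$-compactoidness of the relation $f^{-}$) and Theorem \ref{thm:perfect=00003Dcomp}, is the route the corollary presupposes; your sketch neither invokes that fact nor supplies an argument in its place, so the proof is incomplete at its decisive step.
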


We are now in a position to easily prove Propositions \ref{prop:perf}
and \ref{prop:count_perf}.
\begin{proof}[Proof of Proposition \ref{prop:perf}]
 Let $f\in C(\xi,\tau)$ be perfect. Then the inverse image $f^{-}[\{y\}^{\uparrow}]$
is $\xi$-compact at $f^{-}(y)$ (Definition \ref{def:compact_at})
for each $y\in\left|\tau\right|$, because singletons are compact.
This means that $f^{-}(y)$ is a $\xi$-compact set. On the other
hand, since (\ref{eq:perfect}) holds, in particular, for principal
filters $\mathcal{G}$ of $\xi$-closed sets, $f$ is closed.

To show the converse, let $\mathcal{G}\#f^{-}[\mathcal{F}]$, equivalently,
$G\#f^{-}[\mathcal{F}]$ for each $G\in\mathcal{G}$, where $y\in\lm_{\xi}\mathcal{F}$.
As $f$ is a closed map, hence, in particular, $f^{-}$ is an $\mathbb{F}_{0}$-compact
relation, $\cl{}_{\xi}G\cap f^{-}(y)\neq\varnothing$ for every $G\in\mathcal{G}$.
Because $f^{-}(y)$ is assumed to be $\xi$-compact,
\[
\ad_{\ensuremath{\xi}}\mathcal{G}\cap f^{-}(y)=\bigcap\nolimits _{G\in\mathcal{G}}\cl_{\xi}G\cap f^{-}(y)\neq\varnothing,
\]
 which concludes the proof.
\end{proof}
Proposition \ref{prop:count_perf} can be demonstrated analogously
\emph{mutatis mutandis.} 

\subsection{Quotient-like maps}

It turns out that also quotient-like maps admit characterizations
in terms of various types of compactness, as was shown in \cite[Theorem 3.8]{myn.relations}
by \noun{F. Mynard}.
\begin{thm}
\label{thm:quot=00003Dcompactoid}A surjective map $f$ is $\mathbb{J}$-quotient
from $\xi$ to $\tau$ if and only if $f$ is $\mathbb{J}$-compact
from $f^{-}\tau$ to $f\xi$.
\end{thm}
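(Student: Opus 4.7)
My plan is to work through the equivalent adherence formulation (\ref{eq:quot}) of $\mathbb{J}$-quotient, namely $\ad_{\tau}\mathcal{H}\subset\ad_{f\xi}\mathcal{H}$ for each $\mathcal{H}\in\mathbb{J}$, and to compare it directly with the definition of $\mathbb{J}$-compactness of the relation $f$ from $f^{-}\tau$ to $f\xi$. The bridge between the two sides is the characterization $x\in\lm_{f^{-}\tau}\mathcal{F}\Longleftrightarrow f(x)\in\lm_{\tau}f[\mathcal{F}]$ (the defining property of the initial convergence in (\ref{eq:summarize})), combined with the surjective identity $f[f^{-}[\mathcal{G}]]=\mathcal{G}$ from (\ref{eq:f-fsurH}).

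For the forward implication, I would suppose $f$ is $\mathbb{J}$-quotient and fix $x\in\left|\xi\right|$, a filter $\mathcal{F}$ with $x\in\lm_{f^{-}\tau}\mathcal{F}$, and $\mathcal{J}\in\mathbb{J}$ with $\mathcal{J}\#f[\mathcal{F}]$. Then $f(x)\in\lm_{\tau}f[\mathcal{F}]$ and the mesh condition yield $f(x)\in\ad_{\tau}\mathcal{J}$. The quotient hypothesis $\ad_{\tau}\mathcal{J}\subset\ad_{f\xi}\mathcal{J}$ then furnishes $f(x)\in\ad_{f\xi}\mathcal{J}$, which is exactly $\mathbb{J}$-compactness of $f$ at $x$.

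For the converse, assume $f$ is $\mathbb{J}$-compact from $f^{-}\tau$ to $f\xi$ and take $\mathcal{H}\in\mathbb{J}$ together with $y\in\ad_{\tau}\mathcal{H}$. By definition of adherence, there is a filter $\mathcal{F}_{0}$ on $\left|\tau\right|$ with $y\in\lm_{\tau}\mathcal{F}_{0}$ and $\mathcal{F}_{0}\#\mathcal{H}$. Using surjectivity, pick $x\in f^{-}(y)$ and set $\mathcal{F}:=f^{-}[\mathcal{F}_{0}]$; by (\ref{eq:f-fsurH}), $f[\mathcal{F}]=\mathcal{F}_{0}$, hence $x\in\lm_{f^{-}\tau}\mathcal{F}$ and $\mathcal{H}\#f[\mathcal{F}]$. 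The compactness assumption then yields $y=f(x)\in\ad_{f\xi}\mathcal{H}$. Since $y\in\ad_{\tau}\mathcal{H}$ was arbitrary, $\ad_{\tau}\mathcal{H}\subset\ad_{f\xi}\mathcal{H}$ for every $\mathcal{H}\in\mathbb{J}$, so $f$ is $\mathbb{J}$-quotient by (\ref{eq:quot}).

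The argument is essentially a careful unwinding of definitions once the right dictionary is in place. I expect no serious obstacle; the only real choice is taking $\mathcal{F}:=f^{-}[\mathcal{F}_{0}]$ in the converse, which is forced by the need to simultaneously realise $x\in\lm_{f^{-}\tau}\mathcal{F}$ and $\mathcal{H}\#f[\mathcal{F}]$ from the data $y\in\lm_{\tau}\mathcal{F}_{0}$ and $\mathcal{F}_{0}\#\mathcal{H}$.
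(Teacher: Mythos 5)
Your proof is correct and follows essentially the same route as the paper's: both directions translate between $\lm_{f^{-}\tau}\mathcal{F}$ and $\lm_{\tau}f[\mathcal{F}]$, use $f[f^{-}[\cdot]]=\cdot$ for surjective $f$, and match the compactness condition against the formulation (\ref{eq:quot}). Your version is if anything slightly cleaner, since you invoke (\ref{eq:quot}) directly instead of routing through (\ref{eq:quot_rough}) and (\ref{eq:in_fin}) as the paper does.
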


\begin{proof}
($\Rightarrow$) If $x\in\lm_{f^{-}\tau}\mathcal{F}$, then
\[
f(x)\in f(\lm_{f^{-}\tau}\mathcal{F})=f(f^{-}(\lm_{\tau}f[\mathcal{F}]))=\lm_{\tau}f[\mathcal{F}],
\]
because $f$ is surjective. Consider $\mathcal{J}\#f[\mathcal{F}]$
such that $\mathcal{J}\in\mathbb{J}$. By (\ref{eq:quot}), $f^{-}(f(x))\#\ad_{\xi}f^{-}[\mathcal{J}]$,
hence
\[
f(x)\in f(\ad_{\xi}f[\mathcal{J}])=\ad_{f\xi}\mathcal{J}.
\]
Because $f^{-}[\mathcal{J}]\#\mathcal{F}$, we infer that $f(x)\in\ad_{f\xi}\mathcal{J}$,
so that $f[\mathcal{F}]$ is $\mathbb{J}$-compact at $\{f(x)\}$
in $f\xi$.

($\Leftarrow$) Let $\mathcal{J}\in\mathbb{J}$ and $y\in\ad_{\tau}\mathcal{J}$.
Hence, taking into account the equality $f[f^{-}[\mathcal{J}]]=\mathcal{J}$,
\[
f^{-}(y)\subset f^{-}(\ad_{\tau}\mathcal{J})=f^{-}(\ad_{\tau}f[f^{-}[\mathcal{J}]]),
\]
we infer that $f^{-}(y)\subset\ad_{f^{-}\tau}f^{-}[\mathcal{J}]$.
Let $x\in f^{-}(y),x\in\lm_{f^{-}\tau}\mathcal{F}$ and $\mathcal{F}\#f^{-}[\mathcal{J}]$,
equivalently $f[\mathcal{F}]\#\mathcal{J}$. As $f$ is $\mathbb{J}$-compact
from $f^{-}\tau$ to $f\xi$, the filter-base $f[\mathcal{F}]$ is
$\mathcal{\mathbb{J}}$-compact with respect to $f\xi$. In particular,
$\ad_{f\xi}\mathcal{J}\#\{f(x)\}$, that is, $y\in\ad_{f\xi}\mathcal{J}$,
so that, (\ref{eq:quot}) is proved.
\end{proof}
We recall that the topologicity of $\xi$ does not entail that of
$f\xi$ (\footnote{Any finitely deep convergence can be represented as the quotient of
a topology.}), so that Theorem \ref{thm:quot=00003Dcompactoid} cannot be addressed
in the topological framework. 

\section{\label{sec:A-couple-of}Preservation of completeness}

It is well-known that (in topological spaces) continuous maps preserve
\emph{compactness}, but neither \emph{local compactness,} nor \emph{topological
completeness}, which are preserved by continuous perfect maps.
\begin{defn*}
We say that a filter is \emph{adherent} if its adherence is non-empty;
\emph{non-adherent} if its adherence is empty.
\end{defn*}
Our representation theorems for perfect maps enable us to perceive
the reason for the facts mentioned above (\footnote{To establish $\geq$ in (\ref{compl}) we need that the map be perfect,
to assure that the images of non-adherent filters be non-adherent.
This is not needed if $\xi$ is compact, because then all filters
on $\left|\xi\right|$ are $\xi$-adherent. }). I hope that you would appreciate a remarkable simplicity of the
proof below. The argument uses just two facts:
\begin{enumerate}
\item the image of each adherent filter under a continuous map is adherent
(\footnote{Equivalently, the preimage of a non-adherent filter under a continuous
is non-adherent.}), and
\item the preimage of each adherent filter under a perfect map is adherent
(\footnote{Equivalently, the image of a non-adherent filter under a perfect map
is non-adherent.}).
\end{enumerate}
Recall (Appendix \ref{subsec:Completeness}) that the completeness
number $\mathrm{compl}(\theta)$ of a convergence $\theta$ is the
least cardinal $\kappa$, for which $\theta$ is $\kappa$\nobreakdash-complete. 
\begin{thm}[{\cite[XI.6]{CFT}}]
 If $\xi$ and $\tau$ are convergences, and if $f\in C(\xi,\tau)$
is a (surjective) perfect map, then
\begin{equation}
\mathrm{compl}(\xi)=\mathrm{compl}(\tau).\label{compl}
\end{equation}
\end{thm}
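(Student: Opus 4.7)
The plan is to split the equality $\mathrm{compl}(\xi)=\mathrm{compl}(\tau)$ into its two inequalities and to handle each by transporting candidate filters across $f$: continuity will manage one direction, perfection the other. I will argue cardinal-by-cardinal and show that for every relevant $\kappa$, $\xi$ is $\kappa$-complete if and only if $\tau$ is $\kappa$-complete, whence the least such cardinals coincide. The whole argument will use nothing beyond the two facts highlighted by the author right before the theorem.

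For $\mathrm{compl}(\tau)\leq\mathrm{compl}(\xi)$, I will assume $\xi$ is $\kappa$-complete and take a filter $\mathcal{F}$ on $\left|\tau\right|$ of the type whose $\tau$-adherence is asserted by $\kappa$-completeness. By surjectivity, $f^{-}[\mathcal{F}]$ is a non-degenerate filter-base on $\left|\xi\right|$ of no greater base cardinality, so the hypothesis on $\xi$ yields $\ad_{\xi}f^{-}[\mathcal{F}]\neq\varnothing$; applying fact~(1), namely $f(\ad_{\xi}\mathcal{G})\subset\ad_{\tau}f[\mathcal{G}]$, together with the surjective identity $f[f^{-}[\mathcal{F}]]=\mathcal{F}$ from (\ref{eq:f-fsurH}), I will conclude $\ad_{\tau}\mathcal{F}\neq\varnothing$.

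For the converse $\mathrm{compl}(\xi)\leq\mathrm{compl}(\tau)$, I will assume $\tau$ is $\kappa$-complete and take a candidate filter $\mathcal{G}$ on $\left|\xi\right|$. The image $f[\mathcal{G}]$ has base cardinality at most that of $\mathcal{G}$, hence $\ad_{\tau}f[\mathcal{G}]\neq\varnothing$ by hypothesis; the $\mathbb{F}$-perfect inclusion $\ad_{\tau}f[\mathcal{G}]\subset f(\ad_{\xi}\mathcal{G})$ of fact~(2) then forces $\ad_{\xi}\mathcal{G}\neq\varnothing$. This is precisely the direction flagged by the author's footnote as requiring perfection: with only continuity at one's disposal, a non-adherent $\mathcal{G}$ could a priori be sent to an adherent $f[\mathcal{G}]$, and the pullback argument of the previous paragraph would collapse.

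The sole place where the plan might stumble is in verifying that whatever structural hypothesis the appendix imposes in the definition of $\kappa$-completeness --- a bound on base cardinality, or some finer filter-theoretic property --- is preserved under both $f[\cdot]$ and $f^{-}[\cdot]$. For the natural base-cardinality formulation this is immediate, since neither operation enlarges a base, and surjectivity enters exactly to keep $f^{-}[\mathcal{F}]$ non-degenerate and to secure the identity $f[f^{-}[\mathcal{F}]]=\mathcal{F}$. Beyond this small piece of bookkeeping, the proof consists of nothing more than facts~(1) and~(2), exactly matching the author's promise of ``remarkable simplicity''.
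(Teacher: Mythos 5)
There is a genuine gap, and it originates in the definition of $\kappa$-completeness. In this paper (Appendix \ref{subsec:Completeness}), $\theta$ is $\kappa$-complete when there exists a \emph{collection} $\mathbb{H}$ of non-adherent filters with $\mathrm{card}(\mathbb{H})\leq\kappa$ such that every filter $\mathcal{G}$ satisfying $\neg(\mathcal{G}\#\mathcal{H})$ for all $\mathcal{H}\in\mathbb{H}$ is adherent; the cardinal $\kappa$ bounds the size of the witnessing collection, not the base cardinality of any individual filter. (Compactness is $0$-completeness and local compactoidness is $1$-completeness; a ``base-cardinality'' reading cannot reproduce either.) Your argument hinges on the step ``$\mathcal{F}$ is of the type whose adherence is asserted by $\kappa$-completeness, hence so is its image/preimage,'' which you justify only under the incorrect base-cardinality reading that you yourself flag as a guess. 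Under the actual definition, ``being of the right type'' means not meshing any member of a witnessing collection, and the heart of the proof --- which your proposal omits entirely --- is to \emph{construct} such a collection on one space from one on the other: push a witnessing family $\mathbb{J}$ of non-$\xi$-adherent filters forward to $\{f[\mathcal{J}]:\mathcal{J}\in\mathbb{J}\}$ (resp.\ pull $\mathbb{M}$ back to $\{f^{-}[\mathcal{M}]:\mathcal{M}\in\mathbb{M}\}$) and verify that non-adherence is preserved. That verification is exactly where one of the two hypotheses is consumed: perfectness guarantees that each $f[\mathcal{J}]$ is non-$\tau$-adherent, continuity that each $f^{-}[\mathcal{M}]$ is non-$\xi$-adherent.

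As a consequence each direction needs \emph{both} facts, whereas each direction of your proposal uses only one. For $\mathrm{compl}(\tau)\leq\mathrm{compl}(\xi)$: perfectness transports the witnessing family $\mathbb{J}$ to a family of non-$\tau$-adherent filters of the same cardinality; then for $\mathcal{L}$ with $\neg(\mathcal{L}\#f[\mathcal{J}])$ for all $\mathcal{J}$ one gets $\neg(f^{-}[\mathcal{L}]\#\mathcal{J})$, hence $\ad_{\xi}f^{-}[\mathcal{L}]\neq\varnothing$, and continuity together with $f[f^{-}[\mathcal{L}]]=\mathcal{L}$ gives $\ad_{\tau}\mathcal{L}\neq\varnothing$. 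Symmetrically, for $\mathrm{compl}(\xi)\leq\mathrm{compl}(\tau)$ continuity transports $\mathbb{M}$ to the non-$\xi$-adherent family $\{f^{-}[\mathcal{M}]\}$, and perfectness (via $\ad_{\tau}f[\mathcal{G}]\subset f(\ad_{\xi}\mathcal{G})$, or via the $\xi$-compactoidness of $f^{-}[\mathcal{F}]$ for $\tau$-convergent $\mathcal{F}$) yields $\ad_{\xi}\mathcal{G}\neq\varnothing$. Your closing moves --- pushing a nonempty adherence forward by fact (1) and pulling one back by fact (2) --- are sound, but without the transported witnessing collections the premises ``$\ad_{\xi}f^{-}[\mathcal{F}]\neq\varnothing$'' and ``$\ad_{\tau}f[\mathcal{G}]\neq\varnothing$'' are never established, so the proof does not go through as written.
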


\begin{proof}
Let $\mathcal{\mathbb{J}}$ be a a collection of non-$\xi$\nobreakdash-adherent
filters, with $\mathrm{card}(\mathbb{J})\leq\kappa$, such that a
filter $\mathcal{G}$ is $\xi$\nobreakdash-adherent, provided that
$\neg(\mathcal{G}\#\mathcal{J})$ for each $\mathcal{J}\in\mathbb{J}$.
As $f$ is perfect, $f[\mathcal{J}]$ is non-$\tau$\nobreakdash-adherent
for each $\mathcal{J}\in\mathbb{J}$, so that $\mathrm{card}(\{f[\mathcal{J}]:\mathcal{J}\in\mathbb{J}\})\leq\kappa$.
If now $\neg(\mathcal{L}\#f[\mathcal{J}])$ then $\neg(f^{-}[\mathcal{L}]\#\mathcal{J})$,
and by completeness assumption above, $\ad_{\xi}f^{-}[\mathcal{L}]\neq\varnothing$.
As $f$ is continuous, $\varnothing\neq\ad_{\tau}f[f^{-}[\mathcal{L}]]$,
and $f[f^{-}[\mathcal{L}]]=\mathcal{L}$, because $f$ is surjective.
We conclude that $\mathrm{compl}(\xi)\geq\mathrm{compl}(\tau)$.

Conversely, let $\mathbb{M}$ be a collection of non-$\tau$\nobreakdash-adherent
filters, with $\mathrm{card}(\mathbb{M})\leq\kappa$, such that a
filter $\mathcal{G}$ is $\tau$\nobreakdash-adherent, provided that
$\neg(\mathcal{G}\#\mathcal{M})$ for each $\mathcal{M}\in\mathbb{M}$.
As $f$ is continuous, $f^{-}[\mathcal{M}]$ is non-$\xi$\nobreakdash-adherent
for each $\mathcal{M}\in\mathbb{M}$, and $\mathrm{card}(\{f^{-}[\mathcal{M}]:\mathcal{M}\in\mathbb{M}\})\leq\kappa$. 

If now $\neg(\mathcal{G}\#f^{-}[\mathcal{M}])$ then $\neg(f[\mathcal{G}]\#\mathcal{M})$,
thus, by the completeness assumption above, $\varnothing\neq\ad_{\tau}f[\mathcal{G}]$,
hence there is a filter $\mathcal{F}$ such that $\varnothing\neq\lm_{\tau}\mathcal{F}$
and $\mathcal{F}\#f[\mathcal{G}]$, equivalently $f^{-}[\mathcal{F}]\#\mathcal{G}$.
As $f$ is perfect, $f^{-}[\mathcal{F}]$ is $\xi$\nobreakdash-compactoid,
and thus $\ad_{\xi}\mathcal{G}\neq\varnothing$. As a result, $\mathrm{compl}(\tau)\geq\mathrm{compl}(\xi)$.
\end{proof}
We infer that 
\begin{cor}
If $f\in C(\xi,\tau)$ is a (surjective) perfect map, then $\xi$
is compact (resp. locally compact, topologically complete) if and
only if $\tau$ is compact (resp. locally compact, topologically complete)\emph{.}
\end{cor}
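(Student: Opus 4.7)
The plan is to derive the corollary directly from the equality $\mathrm{compl}(\xi)=\mathrm{compl}(\tau)$ furnished by the preceding theorem, by observing that each of the three listed properties is encoded, through Appendix \ref{subsec:Completeness}, as an inequality of the form $\mathrm{compl}(\theta)\leq\kappa$ for a specific cardinal $\kappa$. Compactness corresponds to the smallest such threshold (every filter on $\left|\theta\right|$ is $\theta$-adherent, witnessed by the empty collection of non-adherent filters); topological completeness corresponds to $\mathrm{compl}(\theta)\leq\aleph_0$; local compactness admits an analogous threshold characterization within the same framework.

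Given these characterizations, the argument collapses to a one-line implication. For each of the three properties $P$, let $\kappa_P$ denote the associated threshold. Then $\xi$ has $P$ if and only if $\mathrm{compl}(\xi)\leq\kappa_P$, which by the preceding theorem is equivalent to $\mathrm{compl}(\tau)\leq\kappa_P$, and in turn to $\tau$ having $P$. Three applications of this chain of equivalences, one per property, yield the corollary.

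The substantive work was carried out in the preceding theorem, whose argument required the dual use of continuity (to push adherent filters forward, giving $\mathrm{compl}(\tau)\leq\mathrm{compl}(\xi)$) and of perfectness (to pull adherent filters back, giving the reverse inequality). Once that symmetric argument is in place, no further obstacle stands in the way of the corollary; the only point one might want to verify is the explicit translation of each of the three topological properties into the completeness-number language, and this is precisely the content of the cited appendix.
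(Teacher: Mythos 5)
Your proposal is correct and matches the paper's intent exactly: the corollary is stated as an immediate inference from $\mathrm{compl}(\xi)=\mathrm{compl}(\tau)$, using the appendix characterizations $\mathrm{compl}(\theta)=0$ for compactness, $\mathrm{compl}(\theta)\leq 1$ for local compactness (compactoidness), and $\mathrm{compl}(\theta)\leq\aleph_{0}$ for \v{C}ech completeness. The threshold-transfer argument you give is precisely what the paper leaves implicit.
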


\section{Appendix}

\subsection{Relations\label{subsec:Relations}}

It is well-known that maps and their converses are special cases of
binary relations; equivalence and order are other familiar examples
of such relations. A purpose of this section is not that of recalling
basic facts about relations, but rather to fix a notation and present
a\emph{ calculus of relations}, which, though elementary, is very
convenient.

The framework of relations is more flexible than that of maps, and
their calculus simplifies and clarifies reasoning. If $R\subset W\times Z$
is a relation, then $Rw:=\{z\in Z:(w,z)\in R\}$, and $R^{-}z:=\{w\in W:(w,z)\in R\}$,
which defines the \emph{inverse relation} $R^{-}$ of $R$. For $A\subset W,B\subset Z,$
we denote the \emph{image} of $A$ under $R$ and the \emph{preimage}
of $B$ under $R$, respectively, by
\[
RA:=\bigcup\nolimits _{w\in A}Rw,\textrm{ and }R^{-}B:=\bigcup\nolimits _{z\in B}R^{-}z.
\]

Notice that, for a relation $R\subset W\times Z,A\subset W$ and $B\subset Z$,
the following expressions are equivalent:
\begin{gather*}
RA\cap B\neq\textrm{Ø,}\\
A\cap R^{-}B\neq\textrm{Ø,}\\
(A\times B)\cap R\neq\textrm{Ø.}
\end{gather*}

If $X$ is a set, the relation of \emph{grill} $\#$ on $2^{X}:=\{A:A\subset X\}$
is defined by
\[
A\#B\Longleftrightarrow A\cap B\neq\varnothing.
\]
Now, if $R\subset W\times Z$ is a relation, then the grill interplays
with $R$ and $R^{-}$. It can be used to express the equivalences
above:
\[
RA\#B\Longleftrightarrow A\#R^{-}B\Longleftrightarrow(A\times B)\#R.
\]
The relation of grill is extended to families of subsets, if $\mathcal{A}$
and $\mathcal{B}$ are families of subsets (of a given set), then
\[
\mathcal{A}\#\mathcal{B}\Longleftrightarrow\underset{A\in\mathcal{A}}{\forall}\:\underset{B\in\mathcal{B}}{\forall}\,A\#B.
\]
If $R\subset W\times Z,\mathcal{A}\subset2^{W}$ and $\mathcal{B}\subset2^{Z}$,
then $R[\mathcal{A}]:=\{RA:A\in\mathcal{A}\}$ and $R^{-}[\mathcal{B}]:=\{R^{-}B:B\in\mathcal{B}\}$.
Notice that
\begin{equation}
R[\mathcal{A}]\#\mathcal{B}\Longleftrightarrow\mathcal{A}\#R^{-}[\mathcal{B}].\label{eq:grill_rel}
\end{equation}

\begin{defn}
A \emph{relation} $R\subset W\times Z$ is said to be \emph{injective}
if $Rw_{0}\cap Rw_{1}\neq\varnothing$ implies $w_{0}=w_{1}$; it
is called \emph{surjective} if $RW=Z$ (\cite{Analyse}\cite{CFT}). 
\end{defn}

In particular, a map $f\subset X\times Y$ is injective if $f(x_{0})=f(x_{1})$
implies $x_{0}=x_{1}$, and it is surjective if $f^{-}(y)\neq\varnothing$
for each $y\in Y$. It is straightforward that
\begin{prop}
\label{prop:rel-map} A relation $R\subset X\times Y$ is a map from
$X$ to $Y$ if and only if the inverse relation $R^{-}$ is injective
and surjective \emph{(}\footnote{Indeed, $R$ is surjective, that is, $R^{-}Y=X$ whenever for each
$x\in X$ there exists $y\in Rx$. On the other hand, $R^{-}$ is
injective if for each $x\in X$ there is at most one $y\in Rx$.

If one of the involved sets $X,Y$ is empty, then the product $X\times Y=\varnothing,$
hence there is only one relation $R=\varnothing$. There are two cases:
if $X=\varnothing$, then $R$ is a map; if $Y=\varnothing$ and $X\neq\varnothing$,
then R is not a map \cite[p. 506]{CFT}.}\emph{)}.
\end{prop}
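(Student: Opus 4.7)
The plan is to unfold the two sides into pointwise statements about existence and uniqueness, which is exactly what ``map'' amounts to. A relation $R\subset X\times Y$ is a map $X\to Y$ precisely when $Rx$ is a singleton for every $x\in X$, i.e., when each $x$ has at least one and at most one partner in $Y$ under $R$. So the proof will translate ``at least one'' into surjectivity of $R^{-}$ and ``at most one'' into injectivity of $R^{-}$.

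For the forward direction, assume $R=f$ is a map. Then for every $x\in X$ there exists $y=f(x)\in Y$ with $(x,y)\in R$, equivalently $x\in R^{-}y\subset R^{-}Y$; hence $R^{-}Y=X$, so $R^{-}$ is surjective. For injectivity of $R^{-}$, suppose $R^{-}y_0\cap R^{-}y_1\neq\varnothing$ and pick $x$ in the intersection; then $(x,y_0),(x,y_1)\in R$, so $y_0=f(x)=y_1$.

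For the converse, assume $R^{-}$ is injective and surjective. Fix $x\in X$. Surjectivity of $R^{-}$ gives some $y\in Y$ with $x\in R^{-}y$, i.e., $Rx\neq\varnothing$. If $y_0,y_1\in Rx$, then $x\in R^{-}y_0\cap R^{-}y_1$, and injectivity of $R^{-}$ forces $y_0=y_1$. Thus $Rx$ is a singleton for every $x\in X$, so $R$ is a map.

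There is no real obstacle; the only subtlety is the empty-set edge cases flagged in the footnote, which I would dispose of in a sentence. If $X=\varnothing$, then $R=\varnothing$ is the (unique) map $\varnothing\to Y$, and $R^{-}=\varnothing$ satisfies $R^{-}Y=\varnothing=X$ vacuously and is vacuously injective. If $Y=\varnothing$ but $X\neq\varnothing$, no map $X\to Y$ exists, and consistently $R^{-}Y=\varnothing\neq X$, so $R^{-}$ fails surjectivity. Thus the equivalence holds in all cases.
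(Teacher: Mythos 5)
Your proposal is correct and follows essentially the same route as the paper's own (footnoted) argument: translating surjectivity of $R^{-}$ into ``each $x$ has at least one partner'' and injectivity of $R^{-}$ into ``each $x$ has at most one partner,'' and then treating the empty-set cases exactly as the paper does. Nothing is missing.
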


\begin{defn}
\label{def:graph-closed} Let $\theta$ and $\sigma$ be convergences
on $W$ and $Z,$ respectively. A relation $R\subset W\times Z$ is
called \emph{graph-closed at $w$} if $\ad_{\sigma}R[\mathcal{F}]\subset Rw$
for each filter $\mathcal{F}$ such that $w\in\lm_{\theta}\mathcal{F}$.

Notice that a relation $R$ is graph-closed at each point if and only
if $R$ is a closed subset of the product. In fact, $(w,z)\in\ad_{\theta\times\sigma}R$
whenever there exist $\mathcal{F}$ and $\mathcal{G}$ such that $R\#(\mathcal{F}\times\mathcal{G}),w\in\lm_{\theta}\mathcal{F}$
and $z\in\lm_{\sigma}\mathcal{G}$. By (\ref{eq:grill_rel}), $R\#(\mathcal{F}\times\mathcal{G})$
is equivalent to $\mathcal{G}\#R[\mathcal{F}]$, so that $z\in\lm_{\sigma}\mathcal{G}\subset\ad_{\sigma}R[\mathcal{F}]$
. On the other hand, $(w,z)\in R$ if and only if $z\in Rw$.

It follows that $R$ is graph-closed at every point if and only if
$R^{-}$ is graph-closed at every point.

Finally, notice that if a map $f$ is continuous and valued in a Hausdorff
convergence (\footnote{A convergence $\theta$ is said to be \emph{Hausdorff} if $\{y_{0},y_{1}\}\subset\lm_{\theta}\mathcal{F}$
implies that $y_{0}=y_{1}$. }), then the relation $f$ is graph-closed, hence also $f^{-}$ is
graph-closed. Indeed, if $f\in C(\xi,\tau)$ and $x\in\lm_{\xi}\mathcal{F}$
then $\{f(x)\}=\lm_{\tau}f[\mathcal{F}]=\ad_{\tau}f[\mathcal{F}]$,
both equalities are a consequence of the fact that $\tau$ is Hausdorff.
\end{defn}

\subsection{Filters\label{subsec:Filters}}

A non-empty family $\mathcal{F}$ of subsets of a set $X$ is called
a \emph{filter} whenever $F_{0}\in\mathcal{F}$ and $F_{1}\in\mathcal{F}$
if and only if $F_{0}\cap F_{1}\in\mathcal{F}$. A filter $\mathcal{F}$
is called \emph{non-degenerate} if $\varnothing\notin\mathcal{F}$. 

Let $\overline{\mathbb{F}}X$ stand for the set of all, and by $\mathbb{F}X$
of all non-degenerate filters on $X$. If $\mathcal{F}$ is a filter,
then $\mathcal{B}\subset\mathcal{F}$ is called a \emph{base} of $\mathcal{F}$
(or $\mathcal{F}$ is said to be \emph{generated} by $\mathcal{B}$)
if for each $F\in\mathcal{F}$ there is $B\in\mathcal{B}$ such that
$B\subset F$. 

A filter is called \emph{countably based} if it admits a countable
base. We write $\mathbb{F}_{1}X$ for the set of \emph{countably based}
\emph{filters} on $X$.

A filter is called \emph{principal} if it admits a finite base, equivalently,
a one element base. The principal filters on a set $X$ are of the
form $\{A\}^{\uparrow}:=\{F\subset X:F\supset A\}$. A filter $\mathcal{F}$
is said to be \emph{free }whenever $\bigcap_{F\in\mathcal{F}}F=\varnothing\text{.}$

Ordered by inclusion, $\overline{\mathbb{F}}X$ is a complete lattice,
that is, $\bigcap\mathbb{G}$ and $(\mathbb{\bigcup\mathbb{G}})^{\cap}$
(\footnote{$\mathcal{A}^{\cap}$ stands for the set of all the finite intersections
of the elements of $\mathcal{A}$.}) is a (possibly degenerate) filter for each $\varnothing\neq\mathbb{G}\subset\overline{\mathbb{F}}X$.
The least element of $\overline{\mathbb{F}}X$ is $\{X\}$, and the
greatest one is $2^{X}$, the family of all subsets of $X$ (\footnote{Which is the only degenerate filter on $X$.}).
Restricted to $\mathbb{F}X=\overline{\mathbb{F}}X\setminus\{2^{X}\}$,
this order admits arbitrary infima and $\bigwedge\mathbb{G}=\bigcap\mathbb{G}$;
however, $\bigvee\mathbb{G}$ exists in $\mathbb{F}X$ if and only
if $(\mathbb{\bigcup\mathbb{G}})^{\cap}\neq2^{X}$, that is, whenever
$G_{1}\cap G_{2}\cap\ldots\cap G_{n}\neq\varnothing$ for each finite
subfamily $\{G_{1},G_{2},\ldots,G_{n}\}$ of $\bigcup\mathbb{G}$,
in which case, $\bigvee\mathbb{G}=(\bigcup\mathbb{G})^{\cap}$.

The elements of $\mathbb{F}X$ that are maximal for the order defined
by inclusion are called\emph{ ultrafilters}. The set of ultrafilters
that include a filter $\mathcal{F}$ is denoted by $\beta\mathcal{F}$.
For each filter $\mathcal{F}$,
\[
\mathcal{F}=\bigcap_{\mathcal{U}\in\beta\mathcal{F}}\mathcal{U}.
\]

An ultrafilter is either free or principal; in the latter case, the
ultrafilter on a (non-empty) set $X$ is of the form $\{A\subset X:x\in A\}$
for some $x\in X$.

Let us recall a very useful link between a filter $\mathcal{F}$ and
the set $\beta\mathcal{F}$ of its ultrafilters \cite[Proposition II.6.5]{CFT}.
\begin{thm}[Compactness property of filters]
\label{thm:Comp_ultra} Let $\mathcal{F}$ be a filter, and for each
$\mathcal{U}\in\beta\mathcal{F}$ let $F_{\mathcal{U}}\in\mathcal{U}.$
Then there is a finite subset $\boldsymbol{F}$of $\beta\mathcal{F}$
such that $\bigcup_{\mathcal{U}\in\boldsymbol{F}}F_{\mathcal{U}}\in\mathcal{F}$.
\end{thm}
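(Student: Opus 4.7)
The natural approach is a proof by contradiction that mimics the standard compactness argument for the Stone--\v{C}ech-type space $\beta\mathcal{F}$. First I would suppose, for contradiction, that for every finite subfamily $\boldsymbol{F}\subset\beta\mathcal{F}$, we have $\bigcup_{\mathcal{U}\in\boldsymbol{F}}F_{\mathcal{U}}\notin\mathcal{F}$. Equivalently, for each such finite $\boldsymbol{F}$ and each $G\in\mathcal{F}$, $G\setminus\bigcup_{\mathcal{U}\in\boldsymbol{F}}F_{\mathcal{U}}\neq\varnothing$, i.e.\ $G\cap\bigcap_{\mathcal{U}\in\boldsymbol{F}}(X\setminus F_{\mathcal{U}})\neq\varnothing$.

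From this I would conclude that the family
\[
\mathcal{F}\cup\{X\setminus F_{\mathcal{U}}:\mathcal{U}\in\beta\mathcal{F}\}
\]
has the finite intersection property, that is, its closure under finite intersections is a non-degenerate filter-base. Using the fact recalled just before the statement that each non-degenerate filter is the intersection of the ultrafilters refining it (or, equivalently, a standard Zorn's lemma argument), this filter-base extends to an ultrafilter $\mathcal{V}$.

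Next I would observe two things about $\mathcal{V}$: since $\mathcal{V}\supset\mathcal{F}$, we have $\mathcal{V}\in\beta\mathcal{F}$; but by construction $X\setminus F_{\mathcal{V}}\in\mathcal{V}$, while $F_{\mathcal{V}}\in\mathcal{V}$ by hypothesis on the assignment $\mathcal{U}\mapsto F_{\mathcal{U}}$. This forces $\varnothing=F_{\mathcal{V}}\cap(X\setminus F_{\mathcal{V}})\in\mathcal{V}$, contradicting non-degeneracy. Hence some finite $\boldsymbol{F}$ must satisfy $\bigcup_{\mathcal{U}\in\boldsymbol{F}}F_{\mathcal{U}}\in\mathcal{F}$.

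The main obstacle is mostly bookkeeping rather than depth: one must carefully verify the finite intersection property by checking \emph{every} mixed finite intersection (elements of $\mathcal{F}$ together with complements of selected $F_{\mathcal{U}}$), which is exactly what the contradiction hypothesis delivers. The only non-elementary ingredient is the existence of an ultrafilter refining a given filter with the finite intersection property, i.e.\ Zorn's lemma, which is standard and was implicitly invoked in the discussion of $\beta\mathcal{F}$ in the appendix.
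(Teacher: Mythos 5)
Your argument is correct and complete: the contradiction hypothesis gives exactly the finite intersection property of $\mathcal{F}\cup\{X\setminus F_{\mathcal{U}}:\mathcal{U}\in\beta\mathcal{F}\}$ (since a finite intersection of members of $\mathcal{F}$ is again a member of $\mathcal{F}$), and extending to an ultrafilter $\mathcal{V}\in\beta\mathcal{F}$ yields the contradiction $F_{\mathcal{V}}\in\mathcal{V}$ and $X\setminus F_{\mathcal{V}}\in\mathcal{V}$. The paper itself only cites \cite[Proposition II.6.5]{CFT} for this fact, and your proof is the standard argument given there.
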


Every filter $\mathcal{F}$ on $X$ admits a unique decomposition
\[
\mathcal{F}=\mathcal{F}^{\ast}\wedge\mathcal{F}^{\bullet}\;\text{and}\;\mathcal{F}^{\ast}\vee\mathcal{F}^{\bullet}=2^{X},
\]
 to two (possibly degenerate) filters such that $\mathcal{F}^{\ast}$
is free, $\mathcal{F}^{\bullet}$ is principal. Indeed, if $\mathcal{F}\in\mathbb{F}X$,
then let $F_{\bullet}:=\bigcap_{F\in\mathcal{F}}F$ and $\mathcal{F}^{\bullet}:=\{A\subset X:F_{\bullet}\subset A\}$,
and $\mathcal{F^{\ast}}:=\{F\setminus F_{\bullet}:F\in\mathcal{F}\}^{\uparrow}$.
\begin{defn}
\label{def:cofinite}Let $A\subset B\subset X$. The family
\[
(\nicefrac{B}{A})_{0}:=\{F\subset X:A\subset F,\mathrm{card}(B\setminus F)<\infty\}
\]
is a filter, called a \emph{cofinite filter of} $B$ \emph{centered
at} $A$. 
\end{defn}

If\emph{ $A=\varnothing$ we drop ``centered at $A$'' and write
$(B)_{0}$. }If $B\setminus A$ is infinite, then the \emph{free part}
of $(\nicefrac{B}{A})_{0}$\emph{ is} $(B\setminus A)_{0}$, and the
\emph{principal part} is $\{A\}^{\uparrow}:=\{F\subset X:A\subset F\}$.

\begin{figure}[H]
\begin{centering}
\includegraphics[scale=0.35]{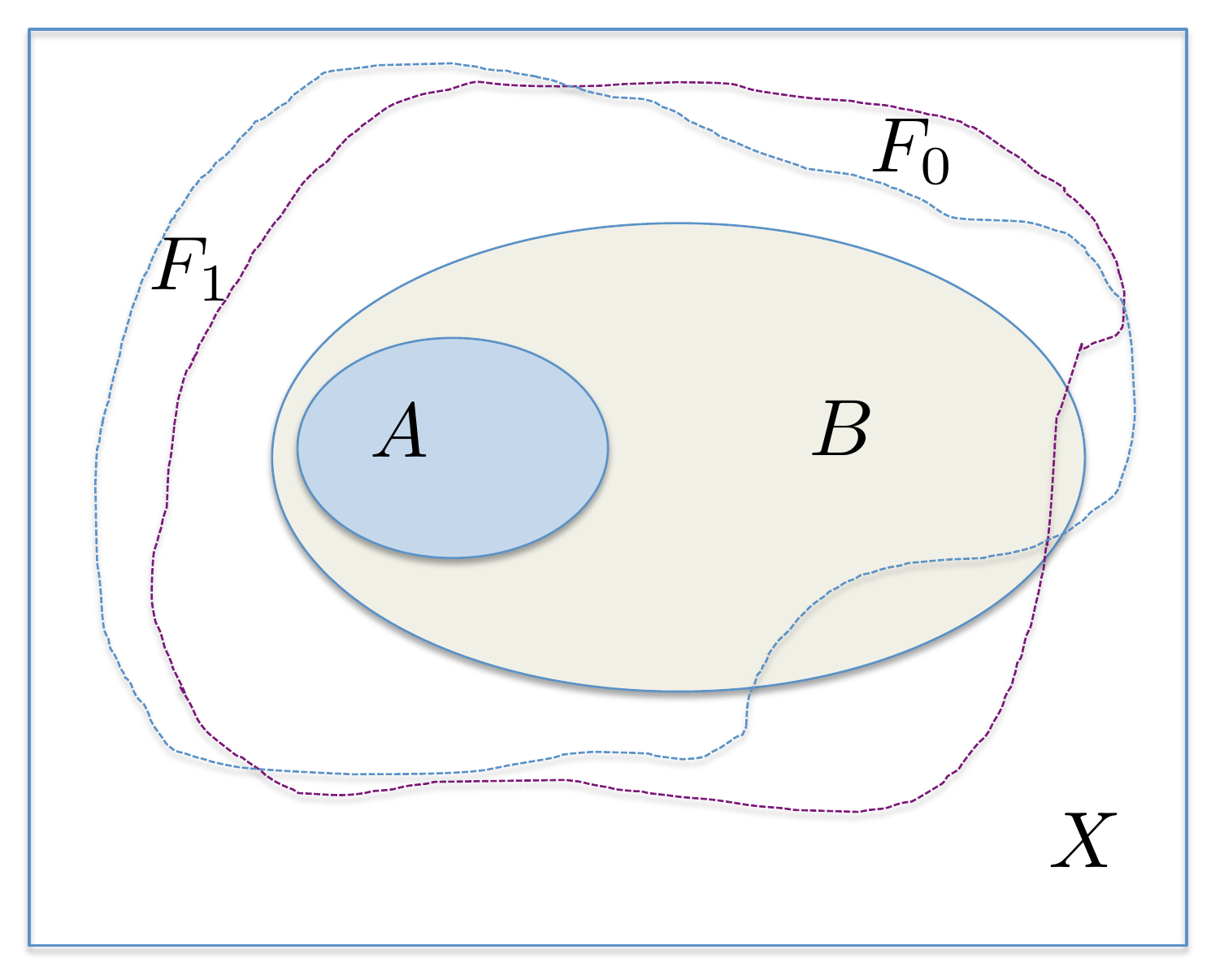}\caption{The \emph{cofinite filter} of $B$ centered at $A$ consists of all
those $F$ that include $A$ and such that $B\setminus F$ is finite.}
\par\end{centering}
\end{figure}

Observe that if $X$ is an infinite set, then the cofinite filter
$(X)_{0}$ of $X$ is the coarsest free filter on $X$. In particular,
$(X)_{0}$ is the infimum of all free ultrafilters on $X$.

\subsection{Sequential filters\label{subsec:Sequential-filters}}

A filter $\mathcal{E}$ on $X$ is called \emph{sequential }if there
exists a sequence $(x_{n})_{n}$ of elements of $X$ such that $\{\{x_{n}:n>m\}:m\in\mathbb{N}\}$
is a base of $\mathcal{E}$. Sequential filters are non-degenerate,
countably based and each contains a countable set, either infinite
or finite. Moreover
\begin{prop*}
A filter is sequential if and only if it is a cofinite filter of a
countable set $B$ centered at a subset $A$ of $B$.
\end{prop*}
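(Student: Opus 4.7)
The plan is to prove both implications by an explicit construction tying a generating sequence to the data $(B, A)$.

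For the forward direction, suppose $\mathcal{E}$ is generated by the tails $T_m := \{x_n : n > m\}$ of a sequence $(x_n)_n$. I would set $B := \{x_n : n \in \mathbb{N}\}$, which is countable, and $A := \bigcap_{m \in \mathbb{N}} T_m$, the set of values occurring infinitely often in the sequence. Then $A \subset B$, and checking $\mathcal{E} = (\nicefrac{B}{A})_0$ is direct: each $T_m$ contains $A$ and its complement in $B$ is contained in the finite set $\{x_0, \ldots, x_m\}$, so $T_m \in (\nicefrac{B}{A})_0$, and closure under supersets gives $\mathcal{E} \subset (\nicefrac{B}{A})_0$. Conversely, if $F \in (\nicefrac{B}{A})_0$, each $y \in B \setminus F$ lies outside $A$ and therefore satisfies $y = x_n$ for only finitely many $n$; summing over the finite set $B \setminus F$, the index set $\{n : x_n \notin F\}$ is finite, so $T_m \subset F$ for $m$ larger than all its elements.

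For the backward direction, given countable $B$ and $A \subset B$ with $(\nicefrac{B}{A})_0$ non-degenerate, I would produce an explicit sequence case by case. If $B \setminus A$ is finite, then $A \subset F$ already forces $B \setminus F$ to be finite, whence $(\nicefrac{B}{A})_0 = \{A\}^{\uparrow}$ and $A$ must be non-empty; a cyclic enumeration of $A$ (when $A$ is finite) or a diagonal enumeration repeating each element infinitely often, such as $a_1;\,a_1,a_2;\,a_1,a_2,a_3;\ldots$ (when $A$ is countably infinite), yields a sequence whose every tail equals $A$. If $B \setminus A$ is countably infinite, I enumerate it injectively as $c_1, c_2, \ldots$, choose an enumeration $(a'_n)$ of $A$ with every element of $A$ appearing infinitely often, and interleave, setting $x_{2n-1} := c_n$ and $x_{2n} := a'_n$ (or simply $x_n := c_n$ in case $A = \varnothing$). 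Each tail then contains $A$, via the $a'$-part, and omits only finitely many $c_j$, so sits inside $(\nicefrac{B}{A})_0$; given any $F \in (\nicefrac{B}{A})_0$, the finite set $B \setminus F$ lies in $B \setminus A = \{c_1, c_2, \ldots\}$, and choosing $m$ past the indices of these finitely many $c_j$'s provides a tail in $F$.

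The construction is essentially bookkeeping across the finite/infinite cases for $A$ and $B \setminus A$; the only conceptual point is identifying $A$ on the forward side as the cluster set of the sequence, and on the backward side arranging the interleaving so that every tail simultaneously contains $A$ and exhausts $B \setminus A$ cofinitely. The infinite-repetition enumeration of $A$ is precisely what makes both properties hold at once, and will be the main thing to verify carefully.
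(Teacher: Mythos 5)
Your argument is correct and complete: the identification of $A$ as the kernel $\bigcap_{m}\{x_n:n>m\}=\bigcap_{F\in\mathcal{E}}F$ (the set of infinitely recurring values) is exactly the right move for the forward direction, and the interleaving of an injective enumeration of $B\setminus A$ with an infinitely-repeating enumeration of $A$ settles the converse, with the non-degeneracy caveat (excluding $A=\varnothing$ with $B$ finite) correctly isolated. The paper itself offers no proof of this proposition, only illustrative examples, but your construction is precisely in the spirit of its surrounding material: your $(A,B\setminus A)$ split mirrors the paper's decomposition of $(\nicefrac{B}{A})_{0}$ into the principal part $\{A\}^{\uparrow}$ and the free part $(B\setminus A)_{0}$, so nothing further is needed.
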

\begin{example*}
The sequence $1,\frac{1}{2},\frac{1}{3},\ldots,\frac{1}{n},\ldots$
generates a cofinite filter of $\{\frac{1}{n}:n\in\mathbb{N}_{0}\}$
(\footnote{Where $\mathbb{N}_{0}:=\mathbb{N}\setminus\{0\}$.}) centered
at the empty set $\varnothing$; therefore it is a free filter. A
constant sequence with $x_{n}=1$ for each $n$, generates a cofinite
filter of $\{1\}$ at $\{1\}$; it is a principal filter. The sequence
$1,\frac{1}{2},\frac{1}{2},\frac{1}{3},\frac{1}{3},\frac{1}{3},\ldots,\underbrace{\tfrac{1}{n},\ldots,\tfrac{1}{n}}_{n\textrm{ times}}$
generates the same filter as the sequence $1,\frac{1}{2},\frac{1}{3},\ldots,\frac{1}{n},\ldots$.
Finally, the sequence $1,1,\frac{1}{2},1,\frac{1}{2},\frac{1}{3},\ldots,\underbrace{1,\ldots,\tfrac{1}{n}}_{n\textrm{ terms}}$
is a cofinite filter of $\{\frac{1}{n}:n\in\mathbb{N}_{0}\}$ centered
at $\{\frac{1}{n}:n\in\mathbb{N}_{0}\}$; it generates the principal
filter $\{F\subset X:$ $\{\frac{1}{n}:n\in\mathbb{N}_{0}\}\subset F\}$.
\end{example*}

\subsection{Adherence-determined subclasses\label{subsec:Reflective-subclasses}}

Reflective classes of topologies, pretopologies, paratopologies, and
pseudotopologies are \emph{adherence-determined}. 

If for each convergence $\theta$, there exists a collection of filters
$\mathbb{H}(\theta)$ on $\left|\theta\right|$, containing all principal
filters, and such that $\eta\leq\theta$ implies $\mathfrak{\mathbb{H}(\eta)\subset\mathbb{H}(\theta)},\mathbb{H}(H\theta)=\mathbb{H}(\theta)$,
and $\mathcal{H}\in\mathbb{H}(\tau)$ implies $f^{-}[\mathcal{H}]\in\mathbb{H}(f^{-}\tau)$
for all $\eta,\theta,\tau$ and $f$, then
\[
\lm_{H\theta}\mathcal{F}=\bigcap_{\mathbb{H}(\theta)\ni\mathcal{H}\#\mathcal{F}}\ad_{\theta}\mathcal{H}
\]
defines a concrete reflector $H$ \cite[Corollary XIV.3.3]{CFT}.
In Subsection \ref{subsec:Adherence-determined-reflectors}, we have
already discussed and applied adherence-determination by classs of
filters independent of the convergence. The class of topologies is
adherence-determined by the class of closed principal filters, hence,
depending on the convergence.

Here is a table of some important adherence-determined reflectors: 

\begin{table}[H]
\begin{tabular}{|c|c|c|c|}
\hline 
convergence class & reflector $H$ & $\mathbb{H}$ & filter class\tabularnewline
\hline 
\hline 
pseudotopologies & $\mathrm{S}$ & $\mathfrak{\mathbb{F}}$ & all filters\tabularnewline
\hline 
paratopologies & $\mathrm{S}_{1}$ & $\mathbb{F}_{1}$ & countably based filters\tabularnewline
\hline 
pretopologis & $\mathrm{S}_{0}$ & $\mathbb{F}_{0}$ & principal filters\tabularnewline
\hline 
topologies & $\mathrm{T}$ & $\mathbb{F}_{0}(\cdot)$ & closed principal filters\tabularnewline
\hline 
\end{tabular}
\end{table}

It is clear that
\[
\mathbb{F}_{0}(\cdot)\subset\mathbb{F}_{0}\subset\mathbb{F}_{1}\subset\mathbb{F},
\]
and thus,
\[
\mathrm{T}\leq\mathrm{S}_{0}\leq\mathrm{S}_{1}\leq\mathrm{S}.
\]

In particular, a convergence $\xi$ is a \emph{pretopolog}y whenever
(\footnote{This is equivalent to the fact that, for each $x$, there exists a
coarsest filter that $\xi$-converges to $x$. It is called the vicinity
filter of $\xi$ at $x$, and is denoted by $\mathcal{V}_{\xi}(x)$.})
\[
\lm_{\xi}\mathcal{F}=\bigcap\nolimits _{H\in\mathcal{F}^{\#}}\ad_{\xi}H,
\]
and $\xi$ is a \emph{pseudotopology} whenever
\[
\lm_{\xi}\mathcal{F}=\bigcap\nolimits _{\mathcal{H}\#\mathcal{F}}\ad_{\xi}\mathcal{H}=\bigcap\nolimits _{\mathcal{U}\in\mathcal{\beta F}}\ad_{\xi}\mathcal{U}=\bigcap\nolimits _{\mathcal{U}\in\mathcal{\beta F}}\lm_{\xi}\mathcal{U},
\]
where $\beta\mathcal{F}$ stands for the set of all ultrafilters that
are finer than $\mathcal{F}$. 
\begin{example}[Non-topological pretopology]
\label{exa:non-top} Let $(X_{n})_{n}$ be a sequence of disjoint
countably infinite sets, let $x_{n}\in X_{n}$ for each $n$, and
$x_{\infty}\notin X_{n}$ for each $n\in\mathbb{N}$. Consider the
set
\[
X:=\bigcup\nolimits _{n\in\mathbb{N}}X_{n}\cup\{x_{\infty}\}.
\]
Before continuing, recollect the notion of \emph{cofinite filter}
(Appendix, Definition \ref{def:cofinite}). Consider on $X$ a pretopology
$\theta$ given by setting $\mathcal{V_{\theta}}(x_{n})$ to be the
cofinite filter of $X_{n}$ centered in $x_{n}$, and $\mathcal{V_{\theta}}(x_{\infty})$
to be the cofinite filter of $X_{\infty}:=\{x_{\infty},x_{0},\ldots\}$
centered at $x_{\infty}$, while all the other points be isolated.
\begin{example}
\begin{figure}[H]
\begin{centering}
\includegraphics[scale=0.4]{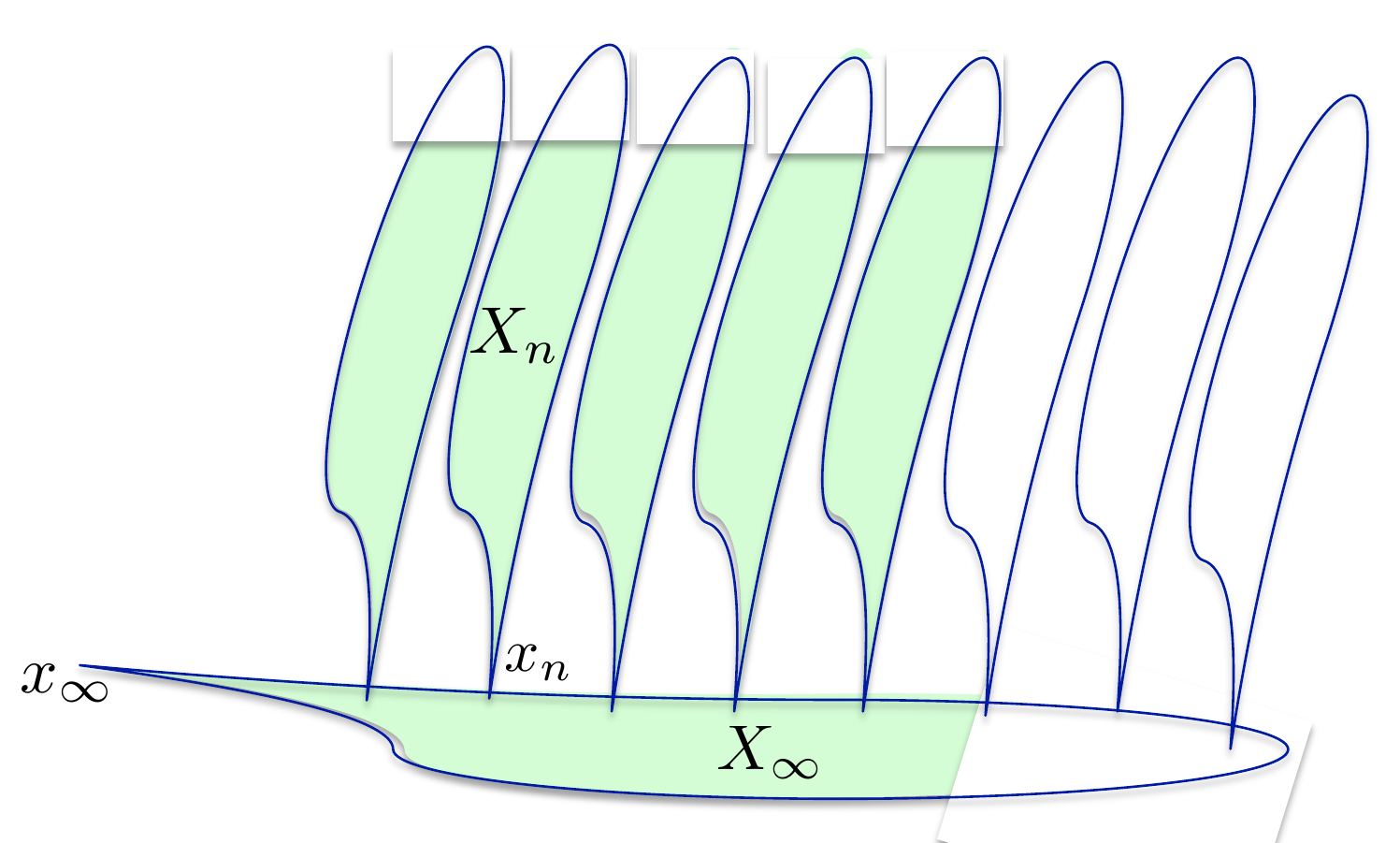}
\par\end{centering}
\caption{A typical open set $O$ containing $x_{\infty}$ is represented by
the green area; it has cofinite intersections with $X_{n}$ for a
cofinite subset of $x_{n}$ in $X_{\infty}$.}
\end{figure}
\end{example}

This pretopology is not a topology. Indeed, if $O$ is an open set
containing $x_{\infty}$, then $O\setminus X_{\infty}$ is finite,
and $O\setminus X_{n}$ is finite for each $n$ such that $x_{n}\in O\cap X_{\infty}$.
Therefore, $X_{\infty}\in\mathcal{V_{\theta}}(x_{\infty})\setminus\mathcal{O_{\theta}}(x_{\infty})$,
hence $\theta$ is not a topology.
\end{example}

\begin{example}[{A non-pretopological pseudotopology \cite[Example VIII.2.6]{CFT}}]
\label{exa:sequential}The \emph{usual sequential convergence} $\sigma$
of the real line is defined by $r\in\lm_{\sigma}\mathcal{F}$ if there
exists a sequential filter (Appendix \ref{subsec:Sequential-filters})
$\mathcal{E}\leq\mathcal{F}$ so that $\mathcal{E}$ converges to
$r$ in the \emph{usual topology}. The convergence $\sigma$ is not
a pretopology, because the infimum of all sequential filters converging
to a given point $r$ of $\mathbb{R}$, is the usual neighborhood
filter at $r$. In fact, if there were 
\begin{equation}
V\in\mathcal{V_{\sigma}}(r)\setminus\bigcap\{\mathcal{E}\in\mathcal{\mathbb{E}:E}\supset\mathcal{V_{\sigma}}(r)\},\label{eq:seq}
\end{equation}
where $\mathbb{E}$ stands for the class of sequential filters, then
there would be a sequential filter $\mathcal{E}\supset\mathcal{V_{\sigma}}(r)$
such that $V\notin\mathcal{E}$. This means that there exists a sequence
$(x_{n})_{n}$ of elements of $\mathbb{R}\setminus V,$ and which
generates a filter finer than $\mathcal{E}$, hence finer than $\mathcal{V_{\sigma}}(r)$
in contradiction with (\ref{eq:seq}).

The convergence $\sigma$ is a pseudotopology. Indeed, it is enough
to consider a free filter $\mathcal{F}$ such that $r\in\lm_{\sigma}\mathcal{U}$
for each $\mathcal{U}\in\beta\mathcal{F}$. Accordingly, for each
$\mathcal{U}\in\beta\mathcal{F}$ there is a countably infinite set
$E_{\mathcal{\mathcal{U}}}\in\mathcal{U}$, hence $\mathcal{E_{\mathcal{U}}\subset\mathcal{U}}$
and $r\in\lm_{\sigma}\mathcal{E_{\mathcal{U}}}$, where $\mathcal{E}_{\mathcal{\mathcal{U}}}$
is the cofinite filter of $E_{\mathcal{\mathcal{U}}}$ (see Appendix
\ref{subsec:Filters}). By Theorem \ref{thm:Comp_ultra}, there is
a finite subset $\boldsymbol{F}$ of $\beta\mathcal{F}$ such that
$\bigcup_{\mathcal{U}\in\boldsymbol{F}}E_{\mathcal{U}}\in\mathcal{F}$,
and thus $\mathcal{E}:=\bigwedge_{\mathcal{U}\in\boldsymbol{F}}\mathcal{E_{\mathcal{U}}}$
is the cofinite filter of $\bigcup_{\mathcal{U}\in\boldsymbol{F}}E_{\mathcal{U}}$,
hence $\mathcal{E}\subset\mathcal{F}$ (\footnote{Indeed, every free filter $\mathcal{F}$ is finer than the cofinite
filter of any $F_{0}\in\mathcal{F}$.}) and $r\in\lm_{\sigma}\mathcal{E}\subset\lm_{\sigma}\mathcal{F}$.
\end{example}

\begin{example}[Non-pseudotopological convergence]
\label{exa:non-pseudo} Consider a convergence on an infinite set
$X$, for which all the points are isolated, except for one $x_{\infty}$,
to which converges each free ultrafilter on $X$, as well as $\{x_{\infty}\}^{\uparrow}$.
This convergence is not pseudotopological, because the cofinite filter
$(X)_{0}$ (see Appendix \ref{subsec:Filters}), the infimum of all
free filters on $X$, does not converge to $x_{\infty}$, by the definition
of the convergence.
\end{example}

\subsection{Compactness\label{subsec:Compactness}}

In order to provide an opportune framework for further understanding,
I shall extend the notion of compactness in several ways. Let me recall
that no separation axiom is implicitly assumed!

In topology, a set $A$ is said to be \emph{compact at a set $B$}
if each open cover of $B$ admits a finite subfamily that is a cover
of $A$. This property admits an equivalent formulation in terms of
filters: if $\mathcal{F}$ is a filter such that $A\in\mathcal{F}^{\#}$
then $B\#\ad\mathcal{F}$.

A subset $K$ of a topological space is called \emph{compact} if it
is compact at $K$ (compact at itself). In each Hausdorff topology,
a compact subset is closed, but without the separation axiom (here
the Hausdorff property), this no longer the case (\footnote{\label{fn:sierp}Let $\$$ be a Sierpi\'{n}ski topology on $\{0,1\}$,
that is, the open sets are $\varnothing,\{0\},\{0,1\}$. In particular,
$\lm_{\$}\{0\}^{\uparrow}=\{0,1\}$. Each subset of this space is
compact, in particular, the set $\{0\}$ is compact, but is not closed.}).

In general convergences spaces, the two notion of compactness above
are no longer equivalent. Cover compactness is stronger than filter
compactness, and it turns out that filter compactness is more relevant,
because continuous maps preserve filter compactness, but, in general,
not cover compactness (see \cite{myn.relations}).

The concept of compactness has been extended from sets to families
of sets by \noun{L. V. Kantorovich} and al. \cite{KAN}, \noun{M.
P. Kac} \cite{kac}, and in \cite{DGL,DGL.kur} by \noun{G. H. Greco},
\noun{A. Lechicki} and the author.

A family $\mathcal{A}$ is said to be \emph{compact at} a family\emph{
$\mathcal{B}$} whenever if a filter meshes $\mathcal{A}$ then its
adherence meshes $\mathcal{B}$. A family is called \emph{compact}
if it is compact at itself; \emph{compactoid} (\footnote{The term \emph{compactoid} was introduced by \noun{G. Choquet} in
\cite{cho} for \emph{relatively compact}. Its advantage is its shortness.})\emph{ }if it is compact at the whole underlying space.

Obviously, the defined concept (\footnote{The concept of compactness of families of sets proved its usefulness
in various situations, for example, open sets in some hyperspaces
have been characterized in terms of compact families \cite[Theorem 3.1]{DGL.kur}. }) generalizes that of (relative) compactness: a set $A$ is compact
at a set $B$ if and only if $\{A\}$ is compact at $\{B\}$. 
\begin{rem*}
If a convergence $\theta$ is Hausdorff and $A$ is $\theta$-compact
at $B$, then $\ad_{\theta}A\subset B$. Indeed, if $x\in\ad_{\theta}A$,
then there exists an ultrafilter $\mathcal{U}$ such that $A\in\mathcal{U=\mathcal{U}^{\#}}$
and $\{x\}=\lm_{\theta}\mathcal{U}$, beacuse $\theta$ is Hausdorff.
But assumption, $B\#\ad_{\theta}\mathcal{U}=\lm_{\theta}\mathcal{U}$,
and thus $x\in B$. If a convergence is not Hausdorff, then $\ad_{\theta}A\subset B$
need not hold, as we have seen in the case of the Sierpi\'{n}ski topology
in Footnote \ref{fn:sierp}.
\end{rem*}
On the other hand, it extends the notion of convergence. In fact,
each convergent filter is compactoid, because $\lm_{\xi}\mathcal{F}\subset\ad_{\xi}\mathcal{H}$
for every $\mathcal{H}\#\mathcal{F}$. More significantly, $x\in\lm_{\mathrm{S}\xi}\mathcal{F}$
if and only if $\mathcal{F}$ is $\xi$-compact at $\{x\}$.

An ulterior extension allows to embrace \emph{countable compactness,
Lindelöf property, }and others.
\begin{defn}
\label{def:compact_at}Let $\mathbb{J}$ be a class of filter containing
all principal filters. Let $\xi$ be a convergence, and $\mathcal{A}$
and $\mathcal{B}$ be two families of subsets of $\left|\xi\right|$.
We say that $\mathcal{A}$ is $\mathbb{J}$\emph{-compact at $\mathcal{B}$
(in $\xi$) if
\[
\underset{\mathcal{F}\in\mathbb{J}}{\forall}(\mathcal{F}\#\mathcal{A}\Longrightarrow\ad_{\xi}\mathcal{F}\in\mathcal{B}^{\#}).
\]
}
\end{defn}

In particular, if $\mathbb{J}$ stands for the class of all filters,
we recover \emph{compactness}, and if $\mathbb{J}$ denotes the class
of countably based filters, we get \emph{countable compactness}. 

$\mathbb{J}$-compactness for the class $\mathbb{J}$ of principal
filters is a rather recent concept \cite{active}. It is called \emph{finite
compactness.} 
\begin{rem*}
In particular, if $A$ is finitely compact at $B$ for a $T_{1}$-convergence
(all points are closed), then $A\subset B$. Indeed, in this situation,
for every $x\in A$, the set $\{x\}$ meshes $A$, hence $\ad\{x\}=\{x\}\in B^{\#}$,
hence $x\in B$. Footnote \ref{fn:sierp} shows that this need not
be the case without the assumption of $T_{1}$. 
\end{rem*}
Let $\chi_{\xi}$ be the \emph{characteristic convergence} of $\xi$
(\footnote{The relation $\chi_{\xi}$ is a convergence. Indeed, if $x\in\lm_{\chi_{\xi}}\mathcal{F}$
and $\mathcal{F}\subset\mathcal{G}$, then $\varnothing\neq\lm_{\xi}\mathcal{F}\subset\lm_{\xi}\mathcal{G},$
hence $x\in X=\lm_{\chi_{\xi}}\mathcal{G}$. On the other hand, $x\in\lm_{\xi}\{x\}^{\uparrow}\subset\lm_{\chi_{\xi}}\{x\}^{\uparrow}$.}), defined by
\begin{equation}
\lm_{\chi_{\xi}}\mathcal{F}:=\begin{cases}
\left|\xi\right|, & \textrm{if }\lm_{\xi}\mathcal{F}\neq\varnothing,\\
\varnothing, & \textrm{otherwise.}
\end{cases}\label{eq:char_def}
\end{equation}

\begin{prop}
\label{prop:Comp_char}\cite{D.comp} If $J$ is $\mathbb{J}$-adherence-determined,
then a filter $\mathcal{H}$ is $\mathbb{J}$-compactoid (in $\xi$),
if and only if
\[
\lm_{J(\chi_{\xi})}\mathcal{H}\neq\varnothing.
\]
\end{prop}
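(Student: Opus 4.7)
The plan is to unwind the three definitions involved and show that the characteristic convergence $\chi_\xi$ is precisely engineered so that the $\mathbb{J}$-adherence-determination formula for $\lm_{J(\chi_\xi)}\mathcal{H}$ collapses to the $\mathbb{J}$-compactoid criterion. Everything is bookkeeping; no combinatorial work is needed.

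First I would compute $\ad_{\chi_\xi}\mathcal{J}$ for an arbitrary filter $\mathcal{J}$. By (\ref{eq:adh}), $\ad_{\chi_\xi}\mathcal{J}=\bigcup_{\mathcal{G}\#\mathcal{J}}\lm_{\chi_\xi}\mathcal{G}$, and by the definition (\ref{eq:char_def}) each summand equals $|\xi|$ when $\lm_\xi\mathcal{G}\neq\varnothing$ and $\varnothing$ otherwise. Consequently
\[
\ad_{\chi_\xi}\mathcal{J}=\begin{cases}|\xi|, & \ad_\xi\mathcal{J}\neq\varnothing,\\ \varnothing, & \ad_\xi\mathcal{J}=\varnothing.\end{cases}
\]

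Next I would apply the $\mathbb{J}$-adherence-determination formula (\ref{eq:adh-determined}) to $\theta:=\chi_\xi$ and to the filter $\mathcal{H}$ in question, getting $\lm_{J(\chi_\xi)}\mathcal{H}=\bigcap_{\mathbb{J}\ni\mathcal{J}\#\mathcal{H}}\ad_{\chi_\xi}\mathcal{J}$. Because each factor of the intersection is either $|\xi|$ or $\varnothing$, the intersection is nonempty (in fact equal to $|\xi|$) precisely when every factor equals $|\xi|$, i.e.\ when $\ad_\xi\mathcal{J}\neq\varnothing$ for each $\mathcal{J}\in\mathbb{J}$ with $\mathcal{J}\#\mathcal{H}$.

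Finally I would match this with Definition \ref{def:compact_at}: $\mathcal{H}$ is $\mathbb{J}$-compactoid in $\xi$ means $\mathcal{H}$ is $\mathbb{J}$-compact at $\{|\xi|\}$, which by unpacking $\mathcal{B}^{\#}$ with $\mathcal{B}=\{|\xi|\}$ is exactly the condition that $\ad_\xi\mathcal{J}\neq\varnothing$ whenever $\mathcal{J}\in\mathbb{J}$ meshes $\mathcal{H}$. Combining the two identifications yields the claimed equivalence.

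There is no real obstacle here; the only point that requires a moment of care is verifying that $\chi_\xi$ is indeed a convergence (noted in the footnote) so that writing $J(\chi_\xi)$ makes sense, and that the hypothesis ``$\mathbb{J}$ contains all principal filters'' guarantees that the indexing family $\{\mathcal{J}\in\mathbb{J}:\mathcal{J}\#\mathcal{H}\}$ in the intersection is nonempty (one may always take $\mathcal{J}=\{H\}^{\uparrow}$ for any $H\in\mathcal{H}$), so that the equivalence is not vacuously skewed at either side.
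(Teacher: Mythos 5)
Your proposal is correct and follows essentially the same route as the paper's proof: both reduce the claim to the observation that $\ad_{\chi_{\xi}}\mathcal{J}$ is $\left|\xi\right|$ or $\varnothing$ according as $\ad_{\xi}\mathcal{J}$ is non-empty or empty, and then read off the equivalence from the adherence-determination formula $\lm_{J(\chi_{\xi})}\mathcal{H}=\bigcap_{\mathbb{J}\ni\mathcal{J}\#\mathcal{H}}\ad_{\chi_{\xi}}\mathcal{J}$. Your additional remarks (that $\chi_{\xi}$ is a convergence and that the index family is non-empty since $\mathbb{J}$ contains principal filters) are sound but only make explicit what the paper leaves implicit.
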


\begin{proof}
By definition, $\mathcal{H}$ is $\mathbb{J}$-compactoid (in $\xi$)
if and only if $\ad_{\xi}\mathcal{J}\neq\varnothing$ (equivalently,
$\ad_{\chi_{\xi}}\mathcal{J}=X$) for each $\mathcal{J}\in\mathbb{J}$
such that $\mathcal{J}\#\mathcal{H}$. As
\[
\lm_{J(\chi_{\xi})}\mathcal{H}=\bigcap\nolimits _{\mathbb{J}\ni\mathcal{J}\#\mathcal{H}}\ad_{\chi_{\xi}}\mathcal{J},
\]
it is non-empty (equivalently, equal to the whole space) if and only
if $\ad_{\chi_{\xi}}\mathcal{J}=X$ for each $\mathcal{J}\in\mathbb{J}$.
\end{proof}
\begin{defn}
Let $\theta$ be a convergence on $W$ and $\sigma$ be a convergence
on $Z$. A relation $R$ is called $\mathbb{J}$\emph{-compact }if
$w\in\lm_{\theta}\mathcal{F}$ implies that $R(w)\#\ad_{\sigma}\mathcal{H}$
for each $\mathcal{H}\#R[\mathcal{F}]$ such that $\mathcal{H}\in\mathbb{J}$
(\footnote{A relation $R\subset W\times Z$ is $\mathbb{J}$\emph{-compactoid
}if $\lm_{\theta}\mathcal{F}\neq\varnothing$ implies that $R[\mathcal{F}]$
is $\mathbb{J}$-compactoid for $\sigma$. It is straightforward that
a graph-closed (Definition \ref{def:graph-closed}) $\mathbb{J}$-compactoid
relation is $\mathbb{J}$-compact. }).

Let us observe that (\footnote{Indeed, let $\mathcal{G}\in\mathbb{J}$ be such that $\mathcal{G}\#R[\mathcal{A}]$,
equivalently $R^{-}[\mathcal{G}]\#\mathcal{A}$, hence $\ad R^{-}[\mathcal{G}]\#B,$
because $R^{-}[\mathcal{G}]\in\mathbb{J}$. Therefore, there exists
$x\in\ad R^{-}[\mathcal{G}]\cap B$, so that there is a filter $\mathcal{F}\#R^{-}[\mathcal{G}]$,
equivalently, $R[\mathcal{F}]\#\mathcal{G},$ such that $B\#\lm\mathcal{F}$.
As $R$ is $\mathbb{J}$\nobreakdash-compact, $\ad\mathcal{G}\#Rx$,
hence $\ad\mathcal{G}\#RB$.})
\end{defn}

\begin{prop}
\label{prop:image_comp} Assume that the class $\mathbb{J}$ of filters
is transferable. Then the image under a $\mathbb{J}$\nobreakdash-compact
relation $R$ of a filter $\mathcal{A}$ that is $\mathbb{J}$-compact
at $B$ is $\mathbb{J}$-compact at $RB$.
\end{prop}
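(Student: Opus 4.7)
The plan is to chase the claim through the calculus of relations, using only the duality $R[\mathcal{A}]\#\mathcal{B}\Leftrightarrow\mathcal{A}\#R^{-}[\mathcal{B}]$ from (\ref{eq:grill_rel}), the transferability of $\mathbb{J}$, and the two compactness hypotheses in succession. A typical witness filter $\mathcal{G}\in\mathbb{J}$ meshing $R[\mathcal{A}]$ will be first pulled back through $R^{-}$ to produce a witness against $\mathcal{A}$, and then pushed forward again through $R$ at a chosen adherence point in $B$.

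Concretely, I would argue as follows. Fix $\mathcal{G}\in\mathbb{J}$ with $\mathcal{G}\#R[\mathcal{A}]$. By (\ref{eq:grill_rel}), this is equivalent to $R^{-}[\mathcal{G}]\#\mathcal{A}$, and by transferability $R^{-}[\mathcal{G}]\in\mathbb{J}$, so the hypothesis that $\mathcal{A}$ is $\mathbb{J}$-compact at $B$ yields $\ad_{\theta}R^{-}[\mathcal{G}]\#B$. Pick $x\in B\cap\ad_{\theta}R^{-}[\mathcal{G}]$; by definition of adherence, there exists a filter $\mathcal{F}$ with $x\in\lm_{\theta}\mathcal{F}$ and $\mathcal{F}\#R^{-}[\mathcal{G}]$, which by (\ref{eq:grill_rel}) again means $R[\mathcal{F}]\#\mathcal{G}$. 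Now the $\mathbb{J}$-compactness of the relation $R$ applies to $\mathcal{G}\in\mathbb{J}$: since $\mathcal{G}\#R[\mathcal{F}]$ and $x\in\lm_{\theta}\mathcal{F}$, we obtain $Rx\#\ad_{\sigma}\mathcal{G}$, and as $x\in B$ gives $Rx\subset RB$, we conclude $RB\#\ad_{\sigma}\mathcal{G}$. This is exactly the statement that $R[\mathcal{A}]$ is $\mathbb{J}$-compact at $RB$.

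The only real subtlety is a bookkeeping one: $R^{-}[\mathcal{G}]$ is strictly speaking a filter-base, so invoking transferability requires taking its generated filter (whose mesh relations with $\mathcal{A}$ and adherence are unchanged), and similarly for $R[\mathcal{F}]$. No step is substantive beyond this, since the grill duality (\ref{eq:grill_rel}) converts every ``image'' mesh into a ``preimage'' mesh and back. The argument essentially writes itself once one recognises that the two compactness assumptions are matched precisely to the two legs of the relation, with transferability providing the bridge that keeps us inside $\mathbb{J}$ as we move across.
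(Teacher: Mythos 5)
Your argument is correct and coincides with the paper's own proof (given in the footnote preceding the proposition): fix $\mathcal{G}\in\mathbb{J}$ meshing $R[\mathcal{A}]$, pull it back via the grill duality and transferability to apply the compactness of $\mathcal{A}$ at $B$, then use the witness filter at the resulting adherence point $x\in B$ together with the $\mathbb{J}$-compactness of $R$ to get $Rx\#\ad_{\sigma}\mathcal{G}$. Your remark about replacing filter-bases by their generated filters is a sound piece of bookkeeping that the paper leaves implicit.
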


As it was observed after Proposition \ref{prop:transferable}, the
classes of all filters, countably based filters, and principal filters
are transferable.

\subsection{Completeness\label{subsec:Completeness}}

\emph{Compactness, local compactness}, and \emph{topological completeness}
(also called \emph{\v{C}ech completeness}) are special cases of $\kappa$\emph{-completeness}
(where $\kappa$ is a cardinal number), introduced and studied by
\noun{Z. Frolík} in \cite{Frolik_G}. In convergence spaces this notion
was investigated without any separation axioms, for example, in \cite{D.covers,compl_number,CFT}
(\footnote{There is a slight difference between our concept and that of Frolík
who uses filters admitting bases of open sets; the two concepts coincide
for regular topologies.}).

A convergence $\theta$ is called $\kappa$\emph{-complete} if there
exists a collection $\mathbb{P}$ of covers with $\mathrm{card}(\mathbb{P})\leq\kappa$
and such that $\ad_{\theta}\mathcal{G}\neq\varnothing$ for every
filter $\mathcal{G}$ such that $\mathcal{G}\cap\mathcal{P}\neq\varnothing$
for each $\mathcal{P}\in\mathbb{P}$. The property is not altered
if we consider merely collections $\mathbb{P}$ of \emph{ideals} (families
stable under subsets and finite unions).

By virtue of Theorem \ref{thm:duality}, a convergence $\theta$ is
$\kappa$-complete if and only if there exists a collection $\mathbb{H}$
of filters with $\mathrm{card}(\mathbb{H})\leq\kappa$ such that $\ad_{\theta}\mathcal{H}=\varnothing$
for each $\mathcal{H}\in\mathbb{H}$, and $\ad_{\theta}\mathcal{G}\neq\varnothing$
for every filter $\mathcal{G}$ such that $\neg(\mathcal{G}\#\mathcal{H})$
for every $\mathcal{H}\in\mathbb{H}$.

\begin{figure}[h]
\begin{centering}
\includegraphics[scale=0.35]{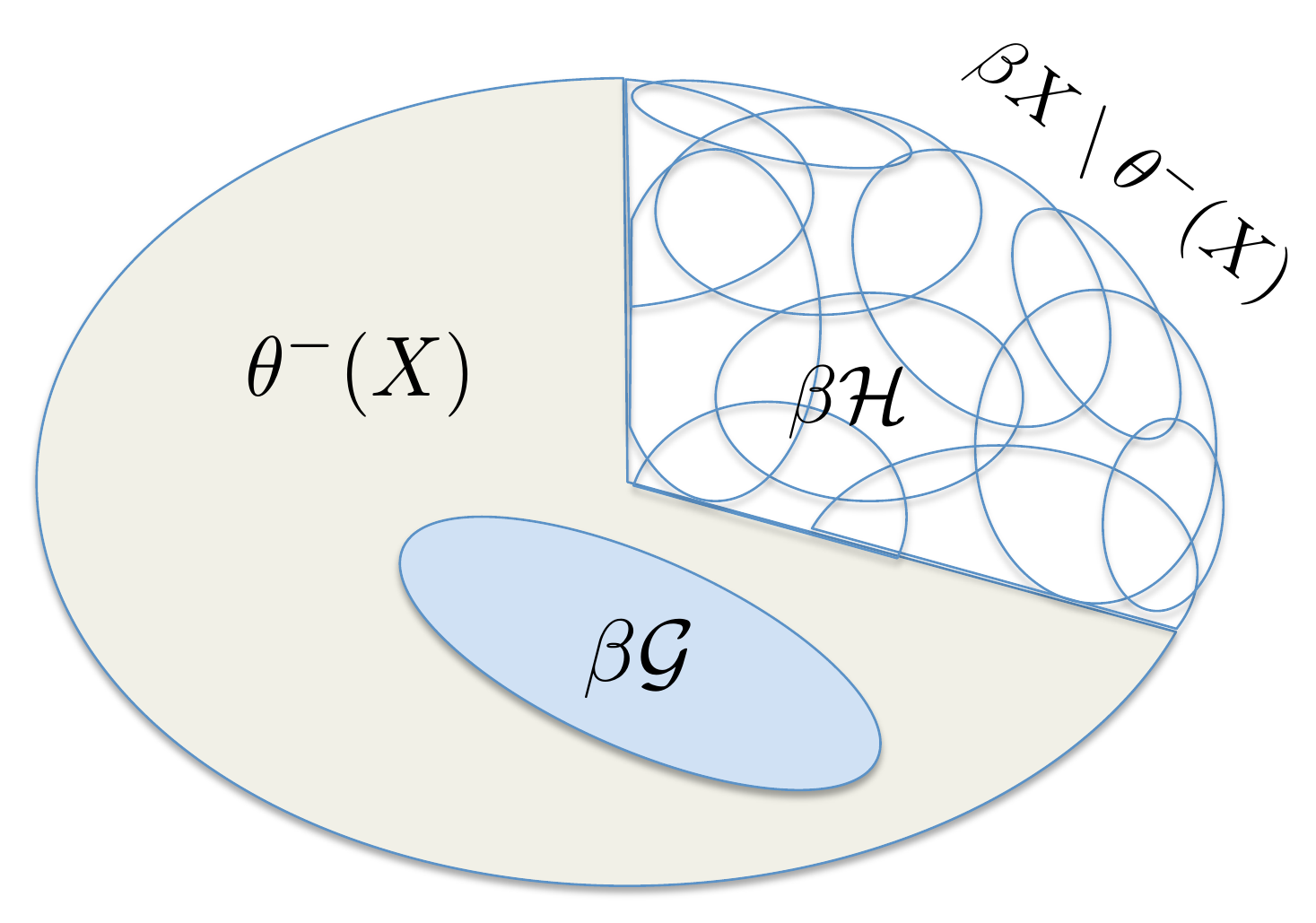}
\par\end{centering}
\caption{The set of non-$\theta$-adherent ultrafilters $\text{\ensuremath{\beta}}X\setminus\theta^{-}(X)$
is filled by $\beta\mathcal{H}:=\{\mathcal{U}\in\beta X:\mathcal{H}\protect\leq\mathcal{U}\}$,
where $\mathcal{H}\in\mathbb{H}$. Each filter $\mathcal{G}$ such
$\neg(\mathcal{G}\#\mathcal{H})$ for each $\mathcal{H}\in\mathbb{H}$
is $\theta$-adherent.}

\end{figure}

In other words, a convergence is $\kappa$-complete whenever there
exists a collection of cardinality $\kappa$ of non-adherent filters
such that an ultrafilter is convergent provided that it is not finer
than any filter from the collection.

The least $\kappa$ for which $\theta$ is $\kappa$-complete, is
called the \emph{completeness number} of $\theta$ and is denoted
by $\mathrm{compl}(\theta)$.
\begin{prop}
A convergence $\theta$ is compact if and only if $\mathrm{compl}(\theta)=0$.
\end{prop}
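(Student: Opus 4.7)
The plan is to unpack the definition of $\kappa$-completeness at the value $\kappa = 0$ and observe that it reduces verbatim to the definition of compactness.

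First I would note that $\mathrm{compl}(\theta)=0$ means exactly that $\theta$ is $0$-complete, i.e.\ that there exists a collection $\mathbb{H}$ of filters with $\mathrm{card}(\mathbb{H})\leq 0$ witnessing the completeness condition. The only such collection is $\mathbb{H}=\varnothing$. So the task is to show that $\mathbb{H}=\varnothing$ witnesses $0$-completeness of $\theta$ if and only if $\theta$ is compact.

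Next I would examine what the completeness condition degenerates to when $\mathbb{H}=\varnothing$. The requirement ``$\ad_{\theta}\mathcal{H}=\varnothing$ for each $\mathcal{H}\in\mathbb{H}$'' is vacuously satisfied, and the requirement ``$\ad_{\theta}\mathcal{G}\neq\varnothing$ for every filter $\mathcal{G}$ such that $\neg(\mathcal{G}\#\mathcal{H})$ for every $\mathcal{H}\in\mathbb{H}$'' becomes, since the universal quantifier over the empty $\mathbb{H}$ is vacuously true, simply: $\ad_{\theta}\mathcal{G}\neq\varnothing$ for every filter $\mathcal{G}$ on $|\theta|$. This is precisely the statement that $|\theta|$ is $\theta$-compact at itself, i.e.\ that $\theta$ is compact (in the filter formulation of compactness recalled in Appendix~\ref{subsec:Compactness}).

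Consequently, if $\theta$ is compact, then $\mathbb{H}=\varnothing$ certifies $0$-completeness, so $\mathrm{compl}(\theta)\leq 0$, hence $\mathrm{compl}(\theta)=0$; conversely, if $\mathrm{compl}(\theta)=0$, the only admissible witness $\mathbb{H}=\varnothing$ forces every filter to have non-empty adherence, which is compactness. There is no real obstacle here beyond carefully parsing the vacuous quantifiers; the only subtlety worth flagging is the convention that the universal quantifier over an empty indexing class is true, so that the condition ``$\neg(\mathcal{G}\#\mathcal{H})$ for every $\mathcal{H}\in\varnothing$'' holds for \emph{every} $\mathcal{G}$.
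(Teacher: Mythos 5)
Your proof is correct and follows essentially the same route as the paper: the paper's one-line justification likewise observes that $\mathrm{compl}(\theta)=0$ forces the witnessing collection of non-adherent filters to be empty, so that every filter is $\theta$-adherent, which is the filter formulation of compactness. Your more careful parsing of the vacuous quantifiers is just an expanded version of that same argument.
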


Indeed, the condition means that there is no non-$\theta$\nobreakdash-adherent
filter, that is, each ultrafilter is $\theta$\nobreakdash-convergent,
equivalently, each filter is $\theta$\nobreakdash-adherent.

A convergence is called \emph{locally compactoid} if each convergent
filter contains a compactoid set.
\begin{prop}
A convergence $\theta$ is locally compactoid if and only if $\mathrm{compl}(\theta)\leq1$
\emph{(}\footnote{\emph{If $\theta$ is $\kappa$-complete for $\kappa<\aleph_{0}$
then }$\mathrm{compl}(\theta)\leq1$.}\emph{)}.
\end{prop}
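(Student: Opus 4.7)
The plan is to establish both implications using the filter-based reformulation of $\kappa$-completeness recalled in Appendix~\ref{subsec:Completeness} together with the single cover furnished by all compactoid subsets of $|\theta|$.

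For the direction $(\Rightarrow)$, let $\mathcal{K}$ denote the family of all compactoid subsets of $|\theta|$. Local compactoidness is exactly the statement that every convergent filter contains some $K\in\mathcal{K}$, i.e., $\mathcal{K}\cap\mathcal{F}\neq\varnothing$ whenever $\lm_{\theta}\mathcal{F}\neq\varnothing$; by the definition of $\theta$-cover recalled in Section~\ref{sec:Characterizations-in-terms}, this says $\mathcal{K}$ is a $\theta$-cover of $|\theta|$. Setting $\mathbb{P}:=\{\mathcal{K}\}$, which satisfies $\card(\mathbb{P})=1$, I verify the defining property of $1$-completeness: if $\mathcal{G}\cap\mathcal{K}\neq\varnothing$, then some compactoid set $K$ lies in $\mathcal{G}$, whence $\{K\}\#\mathcal{G}$, and compactoidness of $K$ yields $\ad_{\theta}\mathcal{G}\neq\varnothing$. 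Hence $\mathrm{compl}(\theta)\leq 1$.

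For the converse $(\Leftarrow)$, if $\mathrm{compl}(\theta)=0$ then $\theta$ is compact, so $|\theta|$ itself is compactoid and belongs to every filter, so local compactoidness is immediate. If $\mathrm{compl}(\theta)=1$, the filter version of completeness recalled in the appendix provides a single filter $\mathcal{H}$ with $\ad_{\theta}\mathcal{H}=\varnothing$ such that $\ad_{\theta}\mathcal{G}\neq\varnothing$ whenever $\neg(\mathcal{G}\#\mathcal{H})$. Fix a convergent filter $\mathcal{F}$. If $\mathcal{F}\#\mathcal{H}$ then any limit of $\mathcal{F}$ would lie in $\ad_{\theta}\mathcal{H}=\varnothing$, a contradiction; hence $\neg(\mathcal{F}\#\mathcal{H})$, and there exist $F\in\mathcal{F}$ and $H\in\mathcal{H}$ with $F\subset H^{c}$. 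I then claim such an $F$ is compactoid. Given any filter $\mathcal{G}$ with $F\in\mathcal{G}^{\#}$, the supremum $\mathcal{G}':=\mathcal{G}\vee\{F\}^{\uparrow}$ is non-degenerate and contains $H^{c}$, so $\neg(\mathcal{G}'\#\mathcal{H})$; the completeness hypothesis then yields $\ad_{\theta}\mathcal{G}'\neq\varnothing$, and the anti-isotonicity of adherence (\ref{eq:anti}) delivers $\ad_{\theta}\mathcal{G}\supset\ad_{\theta}\mathcal{G}'\neq\varnothing$, proving $F$ compactoid.

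The main obstacle, mild rather than deep, lies in the backward direction: one must resist trying to read a compactoid set off $\mathcal{H}$ directly, and instead extract an arbitrary $F\in\mathcal{F}$ disjoint from some $H\in\mathcal{H}$, then refine the test filter to $\mathcal{G}\vee\{F\}^{\uparrow}$ in order to activate the defining property of $\mathcal{H}$. Everything else is routine bookkeeping with the definitions of cover, compactoid family, and adherence.
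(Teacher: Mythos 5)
Your proof is correct, and it differs from the paper's treatment in two respects, both in your favour. First, the paper explicitly argues only the implication from $\mathrm{compl}(\theta)\leq1$ to local compactoidness: it observes that the ideal $\mathcal{H}_{c}$ consists of $\theta$-compactoid sets and then shows that every convergent filter meets $\mathcal{H}_{c}$; the converse is left to the reader, and your single cover $\mathbb{P}=\{\mathcal{K}\}$ of all compactoid sets supplies it cleanly through the cover form of the definition of $\kappa$-completeness. Second, in the backward direction the paper reaches $\mathcal{F}\cap\mathcal{H}_{c}\neq\varnothing$ by passing to the ultrafilters of $\beta\mathcal{F}$, picking some $H_{\mathcal{U}}^{c}\in\mathcal{U}$ for each, and invoking the compactness property of filters (Theorem \ref{thm:Comp_ultra}) together with the fact that $\mathcal{H}_{c}$ is an ideal; you instead observe directly that a convergent filter cannot mesh the non-adherent filter $\mathcal{H}$ (otherwise its limit would lie in $\ad_{\theta}\mathcal{H}=\varnothing$, by the definition (\ref{eq:adh}) of adherence), which immediately produces $F\in\mathcal{F}$ and $H\in\mathcal{H}$ with $F\subset H^{c}$. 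This bypasses ultrafilters and Theorem \ref{thm:Comp_ultra} entirely and is a genuine simplification. Your refinement $\mathcal{G}\vee\{F\}^{\uparrow}$ used to verify that $F$ is compactoid is exactly the (unstated) step the paper also needs in order to justify that the members of $\mathcal{H}_{c}$ are compactoid, so nothing is lost; and your separate handling of the degenerate case $\mathrm{compl}(\theta)=0$ is a welcome precaution, since the filter formulation with an empty collection $\mathbb{H}$ just reduces to compactness.
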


In fact, the condition says that there is a non-$\theta$\nobreakdash-adherent
filter $\mathcal{H}$ such that a filter $\mathcal{W}$ is $\theta$\nobreakdash-adherent
provided that there is $H\in\mathcal{H}$ with $H^{c}\in\mathcal{W}$.
Hence, the ideal $\mathcal{H}_{c}:=\{H^{c}:H\in\mathcal{H}\}$ is
composed of $\theta$\nobreakdash-compactoid sets. 

Therefore, if a filter $\mathcal{F}$ is $\theta$\nobreakdash-convergent,
then each $\mathcal{U}\in\beta\mathcal{F}$ contains an element of
$\mathcal{H}_{c}$, so that, by Theorem \ref{thm:Comp_ultra}, $\mathcal{F}\cap\mathcal{H}_{c}\neq\varnothing$,
because $\mathcal{H}_{c}$ is an ideal. As a result, $\mathcal{F}$
contains a $\theta$\nobreakdash-compactoid set, that is, $\theta$
is locally compactoid.

Finally, it follows from the definition of \v{C}ech completeness (e.
g., \cite{Frolik_G}) that
\begin{prop}
A convergence $\theta$ is \v{C}ech complete if and only if $\mathrm{compl}(\theta)\leq\aleph_{0}$.
\end{prop}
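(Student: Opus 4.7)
The plan is to match the condition $\mathrm{compl}(\theta)\leq\aleph_{0}$ directly against Frol\'ik's definition of \v{C}ech completeness, for which a convergence is \v{C}ech complete when there exists a countable family of open covers $\{\mathcal{P}_{n}\}_{n\in\mathbb{N}}$ such that every filter $\mathcal{G}$ (with an open base) meeting each $\mathcal{P}_{n}$ satisfies $\ad_{\theta}\mathcal{G}\neq\varnothing$. By the definition in Subsection~\ref{subsec:Completeness}, $\mathrm{compl}(\theta)\leq\aleph_{0}$ is the same statement except that the covers are allowed to be arbitrary $\theta$-covers and the filters are arbitrary. So the whole task is to bridge these two mild discrepancies.

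For the implication $\mathrm{compl}(\theta)\leq\aleph_{0}\Rightarrow$ \v{C}ech complete: a witnessing countable family $\mathbb{P}$ of $\theta$-covers already gives the conclusion applied to the open subfamily obtained by passing from each $\mathcal{P}\in\mathbb{P}$ to $\mathcal{P}^{\circ}:=\{\it_{\theta}P:P\in\mathcal{P}\}$, which by Remark~\ref{rem:cover_top} is an open cover of $|\theta|$. Any filter that meets each $\mathcal{P}_{n}^{\circ}$ \emph{a fortiori} meets each $\mathcal{P}_{n}$ (isotonicity: $\it_{\theta}P\in\mathcal{G}$ forces $P\in\mathcal{G}$), so the adherence conclusion transfers.

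For the converse, I would take the countable family of open covers witnessing \v{C}ech completeness and use it directly as $\mathbb{P}$: an open cover is in particular a $\theta$-cover, by Remark~\ref{rem:cover_top} again. The filter side is where the only real subtlety lies: Frol\'ik's hypothesis applies to filters with open bases, while $\aleph_{0}$-completeness requires the adherence conclusion for \emph{all} filters $\mathcal{G}$ meeting each $\mathcal{P}_{n}$. The standard move, and the point where the footnote's regularity caveat enters, is to replace $\mathcal{G}$ by the filter $\mathcal{G}^{\circ}$ generated by $\{O\in\mathcal{O}_{\theta}:O\supset G\text{ for some }G\in\mathcal{G}\}$; under regularity one has $\ad_{\theta}\mathcal{G}^{\circ}=\ad_{\theta}\mathcal{G}$, and $\mathcal{G}^{\circ}$ still meets each open cover $\mathcal{P}_{n}$, so Frol\'ik's hypothesis applies to $\mathcal{G}^{\circ}$ and yields the desired non-empty adherence.

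The main obstacle is precisely this filter-refinement step: without regularity, a filter meeting an open cover need not be refined by a filter with an open base that still meets that cover, which is why the proposition is really the convergence-theoretic version of Frol\'ik's theorem under the implicit separation/regularity setting noted in the footnote preceding Subsection~\ref{subsec:Completeness}. Once this technical passage is carried out, the equivalence reduces to a side-by-side reading of the two definitions.
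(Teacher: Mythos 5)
The first thing to note is that the paper offers no proof of this proposition at all: it is introduced with ``it follows from the definition of \v{C}ech completeness,'' which in this framework means that topological (\v{C}ech) completeness is simply being identified with $\aleph_{0}$-completeness, the comparison with Frol\'ik's original formulation being relegated to the footnote stating that the two concepts ``coincide for regular topologies.'' Your proposal undertakes something genuinely different and harder, namely an actual equivalence proof against Frol\'ik's open-cover/open-base definition. That comparison is worthwhile, and you have correctly located both pressure points (interiors on the cover side, open refinements of filters on the filter side), but as a proof of the literal statement it has two gaps. First, the proposition is asserted for an arbitrary \emph{convergence} $\theta$, whereas your forward direction relies on Remark \ref{rem:cover_top}, which converts $\theta$-covers into open covers via $\it_{\theta}$ only when $\theta$ is a topology; for non-topological convergences open sets do not determine the convergence and the family $\{\it_{\theta}P:P\in\mathcal{P}\}$ need not be a cover at all, so the reduction to Frol\'ik's notion is meaningless there.

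Second, and more seriously, your converse genuinely requires regularity, which the paper explicitly does not assume anywhere (``no separation axiom will be assumed''). The filter $\mathcal{G}^{\circ}$ you construct is \emph{coarser} than $\mathcal{G}$, so $\ad_{\theta}\mathcal{G}\subset\ad_{\theta}\mathcal{G}^{\circ}$ and the non-emptiness you obtain from Frol\'ik's hypothesis sits on the wrong side of the inclusion; the equality $\ad_{\theta}\mathcal{G}^{\circ}=\ad_{\theta}\mathcal{G}$ that rescues this is exactly the regularity identity $\cl_{\theta}G=\bigcap\{\cl_{\theta}O:O\ \text{open},\ O\supset G\}$ and fails in general. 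So what you have actually proved is the content of the paper's footnote --- that the $\mathrm{compl}(\theta)\leq\aleph_{0}$ condition and Frol\'ik's definition agree for regular topologies --- rather than the proposition as stated, which in the paper's convergence-theoretic setting is definitional and needs no argument. If you want to keep your route, the honest fix is to state the equivalence only for regular topologies and present the unrestricted proposition as the adopted definition of \v{C}ech completeness for convergences.
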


It is a classical result that a topology is completely metrizable
if and only if it is metrizable and \v{C}ech complete (e. g., \cite[p. 343]{Eng},\cite[Corollary XI.11.6]{CFT}).

\subsection{Examples of corefelective subclasses\label{subsec:Corefelective-subclasses}}

To complete our rough picture of convergence spaces, let us present
three basic coreflective properties. A convergence $\theta$ is called
\begin{enumerate}
\item \emph{sequentially based} if $x\in\lm_{\theta}\mathcal{F}$ entails
the existence of a sequential filter $\mathcal{E}$ (Appendix \ref{subsec:Sequential-filters})
such that $x\in\lm_{\theta}\mathcal{E}$ and $\mathcal{E\leq}\mathcal{F}.$
\item \emph{of countable character} if $x\in\lm_{\theta}\mathcal{F}$ entails
the existence of a countably based filter $\mathcal{E}$ such that
$x\in\lm_{\theta}\mathcal{E}$ and $\mathcal{E\leq}\mathcal{F}.$
M\emph{etrizable topologies} are of countable character.
\item \emph{locally compactoid} if $x\in\lm_{\theta}\mathcal{F}$ entails
the existence of a $\theta$-compactoid set $K\in\mathcal{F}.$
\end{enumerate}
The corresponding coreflectors are denoted by $\mathrm{Seq},\mathrm{I_{1}},$
and $\mathrm{K}$. The listed coreflective classes traverse the already
discussed reflective classes; they all contain the discrete topologies.
For instance, the convergence $\theta$ from Example \ref{exa:sequential}
is sequential.

\bibliographystyle{plain}
\bibliography{biblio2015,peanoSW_new29}

\newpage{}

\tableofcontents{}
\end{document}